\numberwithin{equation}{section}
\let\OLDthebibliography\thebibliography
\renewcommand\thebibliography[1]{
	\OLDthebibliography{#1}
	\setlength{\parskip}{0pt}
	\setlength{\itemsep}{4pt plus 0.3ex}
}
\theoremstyle{plain}
\newtheorem{Th}{Theorem}[section]
\newtheorem{Lemma}[Th]{Lemma}
\newtheorem{Cor}[Th]{Corollary}
\DeclareMathOperator{\R}{\mathbb{R}}
\DeclareMathOperator{\Z}{\mathbb{Z}}
\DeclareMathOperator{\N}{\mathbb{N}}
\DeclareMathOperator{\di}{\text{div}}
\DeclareMathOperator{\supp}{\text{supp}}
\DeclareMathOperator{\dist}{\text{dist}}
\newcommand{\norm}[2]{\left\lVert #1 \right\rVert_{#2}}
\newcommand{\f}[2]{\frac{#1}{#2}}
\newcommand{\Ft}{\mathcal{F}}
\newcommand{\Sw}[1]{\mathcal{S}(\R^{#1})}
\theoremstyle{definition}
\newtheorem{Def}[Th]{Definition}
\newtheorem{Corollary}[Th]{Corollary}
\newtheorem{Rem}[Th]{Remark}
\newtheorem{?}[Th]{Problem}
\newtheorem{Propo}[Th]{Proposition}
\newcommand{\absatz}{~\\[.5cm]}
\begin{document} 
	
	\subjclass[2010]{Primary: 35A01. Secondary: 35G50}
	\keywords{biharmonic wave map, biharmonic map, fourth order, wave equation}

	%  \AtEndDocument{%
	% \par
	% \medskip
	% \begin{tabular}{@{}l@{}}%
	% \textsc{Institute for Analysis}\\Karlsruhe Institute of Technology (KIT)\\Englerstrasse 2,~D-76131~Germany\\
	% \textit{mail address}: \texttt{ \href{mailto:tobias.schmid@kit.edu}{tobias.schmid@kit.edu}}
	% \end{tabular}}

	\title[Global results for biharmonic wave maps]{Global results for a Cauchy problem\\ related to biharmonic wave maps}

	\author{Tobias Schmid}

	\address{EPFL SB MATH PDE,
		Bâtiment MA,
		Station 8,
		CH-1015 Lausanne}
	\email{tobias.schmid@epfl.ch}

	% \address{Karlsruhe Institute of Technology  \\ Department of Mathematics} 
	%\curraddr{KIT\\Department of Mathematics\\Englerstrasse 2\\Karlsruhe 76131 \\ Germany}
	%\email{tobias.schmid@kit.edu}

	\begin{abstract}
		We prove global existence of a derivative bi-harmonic wave equation with a non-generic quadratic nonlinearity and small initial data in the scaling critical space $$\dot{B}^{2,1}_{\f{d}{2}}(\R^d) \times \dot{B}^{2,1}_{\f{d}{2}-2}(\R^d)$$ for $ d \geq 3 $. Since the solution persists higher regularity of the initial data, we obtain a small data global regularity result for the biharmonic wave maps equation for a certain class of target manifolds including the sphere. \end{abstract}
	
	\maketitle

	\section{Introduction}\label{sec:intro}
	In the following we consider critical points of an \emph{extrinsic (rigid)} action functional 
	\begin{align}\label{action}
		\Phi(u) = \f{1}{2}\int_{\R}\int_{\R^d} |\partial_t u |^2 - |\Delta u |^2 ~dx~dt, 
	\end{align}
	for smooth maps $ u : \R \times \R^d \to \mathbb{S}^{L-1}$  into the  round sphere $ \mathbb{S}^{L-1} \subset \R^L $ .
	% In the case $ N = \R^L $ critical points of \eqref{action} serve as a model of the free evolution of a rather stiff and thin elastic material, such as thin plates, beams or stiff sheets. In this case, the Cauchy stress tensor depends only linear on the strain, which is assumed to produce small material d44eformations. For further reference we refer to ref?. In this sense, the image $ u(\{t\} \times \R^d ) \subset N $ of a critical point $ u $ that maps into a submanifold $N \subset \R^L$, can be seen as a thin and stiff object that evolves freely in $(N,h)$.\\
	Taking  smooth variations $ u_{\delta}: \R\times \R^d \to \mathbb{S}^{L-1}  $ with $ u_{\delta} - u$ having compact support and vanishing at $ \delta = 0$, the critcal points satisfy
	\begin{align}\label{ELc}
		\partial_{t}^2 u + \Delta^2 u \perp T_{u}\mathbb{S}^{L-1}  
	\end{align}
	pointwise on $ \R \times \R^d$. The variation of  \eqref{action} thus gives rise to Hamiltonian equations with \emph{elastic} energy functional
	\begin{align}\label{energy}
		E(u(t)) &= \f{1}{2}\int_{\R^d} | \partial_t u(t) |^2 + | \Delta u(t)|^2 ~dx.
	\end{align} 
	Evaluating the Euler-Lagrange equation \eqref{ELc}, we infer that critical maps $u$ of \eqref{action} are solutions of the following  \emph{biharmonic} wave maps equation
	\begin{align}\label{general}
		\partial_{t}^2u + \Delta^2u &= - |\partial_tu|^2 u - \Delta ( |\nabla u|^2) u\\[3pt] \nonumber 
		&~~~~~ - (\nabla \cdot \langle \Delta u,  \nabla u \rangle) u - \langle \nabla \Delta u, \nabla u\rangle  u\\[3pt] \nonumber
		&= - ( |\partial_t u|^2 + |\Delta u|^2  + 4\langle \nabla u, \nabla \Delta u \rangle  + 2\langle \nabla^2 u , \nabla^2 u \rangle )u,
	\end{align}
	where $ \Delta^2 $ denotes the bi-Laplacien 
	$ \Delta^2 = \Delta ( \Delta \cdot) = \partial_{i j} \partial^{i j} $  and $ \langle \nabla^2 u , \nabla^2 u \rangle = \langle \partial_i \partial^j u, \partial_j \partial^i u\rangle $.
	Hence \eqref{general} is considered to be a fourth order analogue of the (spherical) wave maps equation
	$$ -\partial_{t}^2u + \Delta u =  ( |\partial_t u |^2 - |\nabla u|^2) u,$$
	which has been studied intensively in the past concerning wellposedness,  regularity and gauge invariance, see e.g. the surveys \cite{tataru3},~\cite{Koch}. For the general wave maps equation of the form
	\begin{equation}\label{wave_map}
		\square u = \Gamma(u)( \partial_{\alpha}u, \partial^{\alpha} u) ) = \tilde{\Gamma}(u)( \square(u \cdot u) - 2 u \cdot \square u)
	\end{equation}
	local wellposedness holds almost optimal for (scaling) subcritical regularity  in $ H^s(\R^d) \times H^{s-1}(\R^d) $ with $ s > \f{d}{2}$. This relies  on the \emph{null condition} for \eqref{wave_map} as seen in the  proof of  Klainerman-Machedon in \cite{klainerman1} for $ d \geq 3 $ (and Klainerman-Selberg in \cite{klainerman2} for $d = 2$). In fact, a counterexample of Lindblad in \cite{lindblad} shows that if this condition is absent in a generic wave equation, the sharp regularity for local existence is strictly above $ d/2$.\\[6pt]
	Many advances towards (critical) regularity $s = d/2$ lead to insights for the wave maps equation with impact on related equations. For instance, global solutions with small initial data in the space $ \dot{H}^{\f{d}{2}} \times \dot{H}^{\f{d}{2}-1} $ were constructed by Tao in \cite{tao1} ($ d \geq 5 $) for sphere targets using a novel microlocal renormalization procedure. 
	%We also refer to the subsequent result of Klainerman-Rodnianski \cite{klainerman-rodnianski} ($d \geq 5 $) for targets of bounded geometry and the work on the physical side of Nahmod-Stefanov-Uhlenbeck  ($ d \geq 4 $)  for homogeneous targets and  Shatah-Struwe \cite{shatah-struwe} ($d \geq 4$).
	The $(2+1)$ dimensional case, i.e. small data in the energy space $ \dot{H}^{1}(\R^2) \times L^2(\R^2) $, was for example treated by Krieger in \cite{krieger} with $\mathbb{H}^2$ target space, Tao in \cite{tao2} for the sphere target and Tataru \cite{tataru4} for more general targets. Global wave maps with large initial energy were considered by Krieger-Schlag in \cite{krieger-schlag} (for the $\mathbb{H}^2$ target) and in the  analysis of Sterbenz-Tataru in \cite{Ster-Tat1}, \cite{Ster-Tat2}.
	Since the  literature is vast and the list is not exhaustive, we refer e.g. to \cite{geba-grillakis} for a general overview. \absatz
	In this article, we study the analogue of the \emph{division problem} for wave maps with small data in  $ \dot{B}^{2,1}_{\f{d}{2}}(\R^d) \times \dot{B}^{2,1}_{\f{d}{2}-1}(\R^d)$, which has been solved by Tataru in \cite{tataru1} (for $d \geq 4 $) and in low dimension \cite{tataru2} (i.e. for $ d = 2,3$) by the use of  \emph{null-frame estimates}. More  recently, the division problem for wave maps (in dimension $ d \geq 2 $) has also been solved in a $ U^2 $ based space by Candy-Herr in \cite{candy-herr}. For \eqref{general}, we achieve to solve the division problem in dimension $ d \geq 3 $ using spaces $Z,~W = L(Z)$ which are the analogues of Tataru's $ F , ~\square F $ spaces in \cite{tataru1}. Especially
	$L : Z \to W$
	is a continuous operator. The results in this article are part of the authors PhD thesis \cite{Schmid}.
	\subsection{The Cauchy problem and outline}
	We consider the following generalized Cauchy problem
	\begin{align}\label{general2}
		\begin{cases}
			\partial_{t}^2u + \Delta^2u = Q_u(u_t,u_t) + Q_u(\Delta u,\Delta u) + 2 Q_u( \nabla u, \nabla \Delta u) &\\[4pt]
			~~~~~~~~~~~~~~~~~~~~~~~~~~+ 2 Q_u( \nabla \Delta u, \nabla  u) + 2 Q_u( \nabla^2 u, \nabla^2 u)  =: \mathcal{Q}(u), &\\[4pt]
			(u(0), \partial_t u(0)) = (u_0, u_1)&
		\end{cases}
	\end{align}
	where 
	\begin{align*}
		\mathcal{Q}^J(u) &= [Q_u]^J_{K,M}(\partial_t u^K \partial_tu^M) + [Q_u]^J_{K,M}(\Delta u^K\Delta u^M) + 2[Q_u]^J_{K,M}( \partial_i u^K  \partial^i \Delta u^M)\\[3pt] \nonumber
		&~~~+ 2[Q_u]^J_{K,M}( \partial_i \Delta  u^K  \partial^i  u^M) +  2 [Q_u]^J_{K,M}( \partial_i \partial^j u^K  \partial_j \partial^i u^M),
	\end{align*}
	and $ \{ Q_x~|~ x \in \R^L\} $ is  a smooth family of bilinear forms (in fact required to be analytic at the origin). Here we contract the derivatives over $ i = 1, \dots, d$ and the components of $u$ over $ K,M, J \in \{ 1, \dots, L \} $. 
	The bilinear term $ \mathcal{Q}(u) $ in \eqref{general2} is \emph{non-generic} for our results, in the sense that for bilinear interactions, the set of \emph{resonances}
	$$ \big \{ ( (\tau_1, \xi_1), (\tau_2, \xi_2))~|~ (\tau_1 + \tau_2)^2 - |\xi_1 + \xi_2|^4 = \tau_1^2 + \tau_2^2 - |\xi_1|^4 - |\xi_2|^4  \big \},$$
	is canceled by $ \mathcal{Q}(u)$. We use this fact in the form of the following commutator identity for the operator $ L = \partial_{t}^2 + \Delta^2 $
	\begin{align}\label{null-structure}
		\mathcal{Q}(u) &= \f{1}{2}Q_u (L(u \cdot u) -  u \cdot L u -  Lu \cdot u)\\[3pt] \nonumber
		&= \f{1}{2}[Q_u]_{K, M} (L(u^K \cdot u^M) -  u^K \cdot L u^M -  u^M \cdot Lu^K).
	\end{align}
	This will then be exploited following the work of Tataru in \cite{tataru1}, \cite{tataru2} for wave maps. To be precise, the idea used in Tataru's $ F,~ \square F$ spaces from \cite{tataru1} allow to treat $ \mathcal{Q}(u) $ by continuity of $L$. As a consequence, we find a simple way to solve the disvision problem for \eqref{general2} even in \emph{low dimensions} compared to the energy scaling  (of \eqref{energy}) for biharmonic wave maps \eqref{general}, see e.g. the remark \ref{remmy} below. However, we do not obtain scattering at $t \to \pm \infty$ from this approach.\\[3pt]
	The \emph{main difference} to \cite{tataru1} is that we have to use the control of a lateral Strichartz space and a maximal function bound in order to exploit a smoothing effect for the Schr\"odinger group. More details are given below.
	\absatz 
	The following second Cauchy problem will be solved with the same approach (presented in the following Sections) and further (in Section \ref{sec:proof}) applies to solve \eqref{ELc} for more general target manifolds.\\[3pt]
	The general biharmonic wave maps equation for maps $ u : [0,T) \times \R^d \to N$ into a general embedded manifold $N \subset \R^L $ reads similarly as to the spherical case
	$$ L u \perp T_u N~~~\text{on}~~~(0,T) \times \R^d. $$
	Via the smooth family of orthogonal tangent projector $ P_u : \R^L \to T_uN $, this is equivalent to
	\begin{align}\label{jap}
		Lu = ( I - P_u)(Lu) = ( I - d\Pi_u)(Lu),
	\end{align}
	where $ \Pi$ is nearest point projector $ \Pi : \mathcal{V}_{\varepsilon}(N) \to N,~~~ | \Pi(p) - p| = \inf_{q \in N} | q-p|$ with the identity $ d\Pi_u = P_u $ in case $ u $ maps to $N$ and $ \mathcal{V}_{\varepsilon}(N) = \{ p ~|~ \dist(p, N ) < \varepsilon\} $. Using $ (\partial_t, \nabla u)\in T_uN $ \eqref{jap} is expanded into a semilinear equation, where in contrast to \eqref{general}, tri-linear and quadri-linear terms appear on the RHS. For the sake of readability, we give the expansion of \eqref{jap} in Section \ref{sec:proof}.
	We hence consider the Cauchy problem
	\begin{align}\label{bihom}
		\begin{cases}
			Lu = L(\Pi(u))- d\Pi_u( Lu) &\\[6pt]
			(u(0), \partial_t u(0)) = (u_0, u_1),&
		\end{cases}
	\end{align}
	where $ \Pi : \R^L \to \R^L $ is smooth and real analytic at $ x_0 = 0$. Calculating the series expansion of $\Pi$ in the RHS of \eqref{bihom}, we infer formally
	$$ L(\Pi(u))- d\Pi_u( Lu)  = \sum_{ k \geq 2 } C_k d^k\Pi_0(L(u^k) - k u^{k-1} Lu).$$
	Thus the nonlinearity on the RHS reduces the same non-resonant form \eqref{null-structure}, where it is later justified, by the spaces we use, to commute $L$ with the series expansion. The  resulting theorem is given below in Corollary  \ref{main2}.
	% Similar results in the work of Bejenaru in \cite{bejenaru1} and Ionescu-Kenig in \cite{ionescu-kenig2} for the Schr\"odinger maps flow make also use of the commutator identity (for Schr\"odinger operator).
	\absatz
	At least formally, Duhamel's formula is given by
	\begin{align}\label{Duhamel}
		\begin{pmatrix}
			u(t)\\
			u_t(t)
		\end{pmatrix} &= S(t) \cdot \begin{pmatrix}
			u_0\\
			u_1
		\end{pmatrix}
		+
		\int_0^t S(t-s) \cdot \begin{pmatrix}
			0\\Lu(s)
		\end{pmatrix}~ds,
	\end{align}
	where
	\begin{align*}
		S(t) = \begin{pmatrix}
			\cos( (-\Delta)t) & (-\Delta)^{-1} \sin( (- \Delta) t)\\[2pt]
			\Delta \sin((-\Delta)t) & \cos( (-\Delta)t) 
		\end{pmatrix} = \f{1}{2} Q^{-1} \begin{pmatrix}
			e^{-it \Delta} & 0\\
			0 & e^{it \Delta}
		\end{pmatrix} Q
	\end{align*}
	with
	\begin{align}
		Q = \begin{pmatrix}
			-i \Delta & 1\\
			i \Delta & 1
		\end{pmatrix}.
	\end{align}
	%\end{align*}
	Thus, in the analysis for \emph{biharmonic wave maps} \eqref{general}, it is in principle possible to exploit methods developed for derivative Schr\"odinger equations, which will become apparent below.\\[4pt]
	Results on the division problem for Schr\"odinger maps, see e.g. \cite{bejenaru1}, \cite{ionescu-kenig2},  involve versions of lateral Strichartz estimates in the norm ($ x \mapsto x_e e  + x_{ e^{\perp}},~e \in \mathbb{S}^{d-1}$)
	\begin{align*}
		\norm{f}{L^p_e L^q_{t, e^{\perp}}}^p =  \int_{-\infty}^{\infty} \left( \int_{[e]^{\perp}} \int_{- \infty}^{\infty}  | f( t, r e + x)|^q d t~d x\right)^{\f{p}{q}}d r,
	\end{align*}
	in order to exploit smoothing effects for Schr\"odinger equations, see the Appendix \ref{appendix}. Especially, we likewise rely on factoring 
	$$   L^{\infty}_e L^2_{t, e^{\perp}} \cdot L^{2}_e L^{\infty}_{t, e^{\perp}} \subset L^2_{t,x},  $$
	where the (lateral) energy $ L^{\infty}_e L^2_{t, e^{\perp}}$ gives additional regularity of order $ |\nabla|^{\f12}$ and the maximal function bound $  L^{2}_e L^{\infty}_{t, e^{\perp}} $ is controlled uniform in  $ e \in \mathbb{S}^{d-1}$. Apart from the usual Strichartz space $S_{\lambda}$, this will be essential (in one particular frequency interaction) in Section \ref{sec:multi}.
	\absatz
	The operator $ Lu = \partial_{t}^2u + \Delta^2u$ appears in the \emph{Euler-Bernoulli beam model} ($ d = 1$) and in effective \emph{thin-plate equations} ($d=2$) such as the Kirchoff- and Von K\'arm\'an elastic plate models with small plate deflections if rotational forces are neglected.  As a reference we mention e.g. \cite{plate}, where this situation has been considered explicitly with (nonlinear) boundary dissipation.
	%The regularity theory of wave maps, including a solution of the \emph{summation problem}, was substantially progressed e.g. by Krieger, see ref? (with hyperbolic target), by Tao using a microlocal renormalization procedure in ref? and  Tataru in ref?, resp. Sterbenz-Tataru in ref?.\\[4pt]
	\begin{comment}
	\begin{align}\label{expansion3}
	\partial_t^2u + \Delta^2u &= dP_u(u_t,u_t) + dP_u(\Delta u,\Delta u) + 4 dP_u( \nabla u, \nabla \Delta u) + 2 dP_u(\nabla^2u, \nabla^2 u)\\ \nonumber
	&\quad+2 d^2P_u(\nabla u, \nabla u, \Delta u)  + 4 d^2P_u( \nabla u, \nabla u, \nabla^2 u )\\ \nonumber
	&\quad+ d^3P_u(\nabla u, \nabla u, \nabla u, \nabla u),
	\end{align}
	where 
	\begin{align*}
	d^2P_u( \nabla u, \nabla u, \nabla^2 u ) &= d^2P_u(\partial^i u, \partial_ju, \partial_i \partial^j u)\\[3pt]
	d^3P_u(\nabla u, \nabla u, \nabla u, \nabla u) &= d^3P_u(\partial_i u, \partial^i u, \partial_ju, \partial^j u),
	\end{align*}
	\end{comment}

	% \f12 e^{(-\Delta) it}\big( u_0 - i(- \Delta)^{-1} u_1\big) + \f12e^{\Delta it}\big( u_0 + i(- \Delta)^{-1} u_1\big)\\[2pt]
	%&~~ + \f{1}{2i}\int_0^t \big( e^{(-\Delta) i(t-s)} - e^{(-\Delta) i(t-s)}\big) (-\Delta)^{-1} (Lu)(s)~ds.
	
	~~\\
	%Our \underline{main result} (Theorem \ref{main1} below) is global well-posedness and persistence of regularity for systems of type \eqref{general} in dimension $ n \geq 3$ with analytic nonlinearity and small initial data in the critical space $ \dot{B}^{2,1}_{\f{n}{2}}(\R^n) \times  \dot{B}^{2,1}_{\f{n}{2}-2}(\R^n )$.\\[5pt] As a Corollary (\ref{main2} below), we obtain a global regularity result for extrinsic biharmonic wave maps into spheres satisfying  \eqref{ELc}\\[6pt]
	\begin{center}
		\large \emph{Outline of the article}
	\end{center}
	~~\\
	%In order to exploit a smoothing effect for the Schr\"odinger group $ e^{\pm it \Delta }$, we consider the \emph{lateral norms} $ \norm{f}{L^{p}_{e} L^q_{t, e^{\perp}}} $ 
	%$$ \norm{f}{L^{p}_{e} L^q_{t, e^{\perp}}}^p =  \int_{-\infty}^{\infty} \left( \int_{[e]^{\perp}} \int_{- \infty}^{\infty}  | f( t, r e + x)|^q d t~d x \right)^{\f{p}{q}}d r,~~ e \in \mathbb{S}^{n-1}. $$
	In Section \ref{subsec:Linear-est}, we provide (lateral) Strichartz estimates $L^{p}_{e} L^q_{t, e^{\perp}}$ and the $ L^{2}_{e} L^{\infty}_{t, e^{\perp}}$ estimate for the linear Cauchy problem of the operator $L = \partial_{t}^2 + \Delta^2 $. This is a consequence of the corresponding estimates for $ e^{\pm it \Delta }$  which orginally appeared in \cite{ionescu-kenig1}, \cite{ionescu-kenig2} and \cite{bejenaru1}. In the Appendix \ref{appendix}, we briefly outline proofs of the Strichartz estimates we need for $ e^{\pm i t \Delta}$ based on the calculation by Bejenaru in \cite{bejenaru1}.
	\absatz
	In Section \ref{subsec:function-spaces}, we construct spaces $Z^{\f{d}{2}}, W^{\f{d}{2}}$ such that
	\begin{align}
		&Z^{\f{d}{2}} \subset C(\R,\dot{B}^{2,1}_{\f{d}{2}}(\R^d))\cap \dot{C}^1(\R,\dot{B}^{2,1}_{\f{d}{2}-2}(\R^d)),\\[3pt]~~~ &\norm{u}{Z^{\f{d}{2}}} \lesssim \norm{(u_0, u_1)}{\dot{B}^{2,1}_{\f{d}{2}} \times  \dot{B}^{2,1}_{\f{d}{2}-2}} + \norm{Lu}{W^{\f{d}{2}}},
	\end{align}
	and similar $ Z^{s}, W^{s}$ for $ s > \f{d}{2} $ with data in $ \dot{H}^s(\R^d) \times \dot{H}^{s-2}(\R^d)$.\\[4pt] Further, we prove the algebra properties
	\begin{align}\label{bilinear-embeddings1}
		&Z^{\f{d}{2}} \cdot Z^{\f{d}{2}}  \subset Z^{\f{d}{2}},\\[3pt]
		&W^{\f{d}{2}} \cdot Z^{\f{d}{2}}  \subset W^{\f{d}{2}},\label{bilinear-embeddings2}
	\end{align}
	in Section \ref{sec:multi}.  For the higher regularity, we need to provide the following embeddings
	\begin{align}
		&(Z^{\f{d}{2}} \cap Z^s) \cdot (Z^{\f{d}{2}} \cap Z^s)  \subset Z^{\f{d}{2}}\cap Z^s,\label{klar1}\\[3pt]
		&(W^{\f{d}{2}} \cap W^s) \cdot (Z^{\f{d}{2}}\cap Z^s)  \subset W^{\f{d}{2}} \cap W^s.\label{klar2}
	\end{align}
	To be more precise it suffices, as in \cite{tataru1} and \cite{bejenaru1}, to conclude from the dyadic estimates 
	\begin{align}
		\label{higher-regularity1}
		&\norm{uv}{Z^s} \lesssim \norm{u}{Z^s} \norm{v}{Z^{\f{d}{2}}} + \norm{v}{Z^s} \norm{u}{Z^{\f{d}{2}}},~~ u , v \in Z^{\f{d}{2}} \cap Z^s,\\[3pt]
		&\norm{uv}{W^s} \lesssim \norm{u}{W^s} \norm{v}{Z^{\f{d}{2}}} + \norm{v}{Z^s} \norm{u}{W^{\f{d}{2}}},~~ u \in W^{\f{d}{2}} \cap W^s ,~ v \in Z^{\f{d}{2}} \cap Z^s,\label{higher-regularity2}
	\end{align}
	which is outlined in  Section \ref{sec:multi} for $ s > \f{d}{2}$.
	Finally, we sketch the fixed point argument from \cite{tataru1} and the application to biharmonic wave maps stated in Corollary \ref{main2} in Section \ref{sec:proof}.
	\begin{comment}
	In general, following corresponding work on Schr\"odinger maps \cite{bejenaru1}, linear estimates localized at frequency $ \lambda \in 2^{\Z}$  rely on the \emph{division} by the symbol $\tau^2 - \xi^4 $ of $L$ in $X^{s,b}$-type spaces and homogeneous Strichartz estimates, if the functions are localized at distance to the characteristic $ P =  \{ (\tau, \xi)~|~ |\tau| = \xi^2 \}$ with lower bound of order $ \mathcal{O}(\lambda)$. In case they are localized at distance of order $ \mathcal{O}(\lambda)$ to $P$, we rely on inhomogeneous Strichartz estimates (in lateral coordinates, see below). This will be made rigorous by the use of cut-off multiplier.\\[3pt]
	\end{comment}
	\absatz
	We emphasize that the construction of the dyadic blocks $ Z_{\lambda}, W_{\lambda} $ are the analogues of Tataru's $F_{\lambda} ,~\square F_{\lambda} $ spaces in \cite{tataru1}, since we globally bound $ L u $ in the spaces $ L_t^1 L^2_x$. In particular, the operator
	$$ L : Z_{\lambda}  \to W_{\lambda} $$
	is  \emph{continuous by construction} of $Z_{\lambda}$ and $W_{\lambda}$. Combining this with \eqref{bilinear-embeddings1} and \eqref{bilinear-embeddings2}, it suffices to estimate  $\mathcal{Q}(u) $ in \eqref{general2} with the identity \eqref{null-structure}.\\[3pt]
	As mentioned above, we can not fully rely on the usual Strichartz norm and  have to use the control of the \emph{lateral} Strichartz norm, which exploits additional smoothing in the proof of \eqref{bilinear-embeddings1}. This idea has been used in the similar context of the Schr\"odinger maps flow by Ionescu-Kenig \cite{ionescu-kenig1}, \cite{ionescu-kenig2}, Bejenaru \cite{bejenaru1} and Bejenaru-Ionescu-Kenig \cite{bejenaru2}.

	\subsection{The main results}\label{main-results}
	
	The system \eqref{general2} is largely motivated by biharmonic wave maps, however the results for
	\eqref{general2} are based on the structural extension of evolution equtions with a nonlinearity that, due to \eqref{null-structure}, can be considered \emph{non-generic}.\\[4pt]
	We turn to general systems  \eqref{general2} and \eqref{bihom} for functions $u^1,~\dots,~ u^{L} $ with $L \in \N$, where we assume that $ x \mapsto Q_x $, $ x \mapsto \Pi(x)$ are \emph{real analytic} in the point $x_0 = 0$.
	
	\begin{Th}\label{main1} \begin{itemize} 
			\item[(i)] For $d \geq 3 $ there exists $ \delta  > 0 $ sufficiently small such that the following holds. Let $ (u_0, u_1) \in \dot{B}_{\f{d}{2}}^{2,1} (\R^d)\times \dot{B}_{\f{nd}{2}-2}^{2,1}(\R^d) $ with
			\begin{align}\label{small}
				\norm{u_0 }{ \dot{B}_{\f{d}{2}}^{2,1}(\R^d)} + \norm{u_1}{\dot{B}_{\f{d}{2}-2}^{2,1}(\R^d)} \leq  \delta. 
			\end{align}	
			Then \eqref{general2} and \eqref{bihom} have a global solution $ u\in C(\R, \dot{B}^{2,1}_{\f{d}{2}}(\R^d))\cap \dot{C}^1(\R, \dot{B}^{2,1}_{\f{d}{2}-2}(\R^d)) $  with 
			\begin{align}
				\sup_{t \geq 0 } 
				\big( \norm{ u(t) }{\dot{B}_{\f{d}{2}}^{2,1}(\R^d) } + \norm{\partial_tu(t)}{\dot{B}_{\f{d}{2}-2}^{2,1}(\R^d)} \big) \leq C \delta,
			\end{align}
			for some $ C> 0 $. Further, the solution depends Lipschitz on the initial data.\\
			\item[(ii)] If additionally $(u_0 , u_1) \in \dot{H}^{s }(\R^d) \times  \dot{H}^{s-2}(\R^d) $ for some $ s > \f{d}{2}$, then also $ (u(t) , \partial_t u(t)) \in \dot{H}^{s }(\R^d) \times  \dot{H}^{s-2}(\R^d)$ for all $ t \in \R$ and in fact
			$$ \sup_{t \geq 0 } \big( \norm{ u(t) }{\dot{H}^{s}(\R^d) } + \norm{\partial_tu(t)}{\dot{H}^{s-2}(\R^d)} \big) \leq C (\norm{u_0}{\dot{H}^{s }(\R^d)} + \norm{u_1}{\dot{H}^{s-2}(\R^d)}).$$
			%In the norm on  $\dot{B}_{\f{n}{2}}^{2,1} \times  \dot{B}_{\f{n}{2}-2}^{2,1} \cap \dot{H}^s \times \dot{H}^{s-2}$, the map $(u, \partial_t u)$ depends Lipshitz on the initial data.
			%	\begin{align*}
			%	\mathit{S} :~ B_{\dot{B}^{\f{n}{2}}_{2,1} \times \dot{B}^{\f{n}{2}-2}_{2,1}}& (0, \delta) \cap \dot{H}^{s } \times  \dot{H}^{s-2}\\
			%	& \to C(\R, \dot{B}^{\f{n}{2}}_{2,1}\cap \dot{H}^{s }) \cap \dot{C}^1(\R, \dot{B}^{\f{n}{2}-2}_{2,1}\cap \dot{H}^{s-2 }) 
			%	\end{align*}
			%	is Lipschitz.
		\end{itemize}
	\end{Th}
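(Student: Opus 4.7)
The plan is to run a Picard iteration for Duhamel's formulation of \eqref{general2} in the space $Z^{\f{d}{2}}$ built in Section~\ref{subsec:function-spaces}. Writing the equation schematically as $u = u^{\mathrm{hom}} + L^{-1}\mathcal{Q}(u)$, the linear estimate embedded in the definition of $Z^{\f{d}{2}}$ yields $\norm{u^{\mathrm{hom}}}{Z^{\f{d}{2}}}\lesssim \norm{u_0}{\dot{B}^{2,1}_{\f{d}{2}}}+\norm{u_1}{\dot{B}^{2,1}_{\f{d}{2}-2}}$, while the inhomogeneous part is controlled by $\norm{Lu}{W^{\f{d}{2}}}$ by the very construction of the pair $(Z^{\f{d}{2}},W^{\f{d}{2}})$. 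The whole argument therefore reduces to proving
\begin{align*}
 \norm{\mathcal{Q}(u)}{W^{\f{d}{2}}} \lesssim \norm{u}{Z^{\f{d}{2}}}^{2},
\end{align*}
uniformly for $u$ in a small ball of $Z^{\f{d}{2}}$, together with the analogous difference estimate, which produces a contraction once $\delta$ is small enough.

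The decisive step is this nonlinear bound. I would apply the non-resonant identity \eqref{null-structure} to rewrite
\begin{align*}
 \mathcal{Q}(u) = \tfrac12 [Q_u]_{K,M}\bigl(L(u^K u^M) - u^K\, Lu^M - u^M\, Lu^K\bigr)
\end{align*}
and then estimate the three terms separately using the algebra embeddings \eqref{bilinear-embeddings1} and \eqref{bilinear-embeddings2} together with the by-construction continuity $L: Z^{\f{d}{2}}\to W^{\f{d}{2}}$: the term $L(u^K u^M)$ lies in $W^{\f{d}{2}}$ because $u^K u^M\in Z^{\f{d}{2}}$ by \eqref{bilinear-embeddings1}, while $u^K\, Lu^M$ lies in $W^{\f{d}{2}}$ directly from \eqref{bilinear-embeddings2}. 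The prefactor $[Q_u]_{K,M}$, being analytic in $u$ at the origin, is expanded as a power series $\sum_k c_k u^{\otimes k}$ and absorbed by iterating \eqref{bilinear-embeddings1}, which converges as a geometric series once $\norm{u}{Z^{\f{d}{2}}}$ is below the radius of analyticity of $Q$. The Corollary for \eqref{bihom} is obtained in the same way after inserting the Taylor expansion of $\Pi$ computed in Section~\ref{sec:intro}.

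For part~(ii), the iterates from part~(i) automatically enjoy the extra regularity as soon as the fixed-point map is run in the intersection space $Z^{\f{d}{2}}\cap Z^{s}$. The dyadic bounds \eqref{higher-regularity1}--\eqref{higher-regularity2}, combined with the same null-structure manipulation, give the linearized estimate
\begin{align*}
 \norm{\mathcal{Q}(u)}{W^{s}} \lesssim \norm{u}{Z^{s}}\,\norm{u}{Z^{\f{d}{2}}},
\end{align*}
so that the $Z^{s}$-norm of the iterates stays bounded by a fixed multiple of $\norm{u_0}{\dot{H}^{s}}+\norm{u_1}{\dot{H}^{s-2}}$ thanks to the smallness of $\norm{u}{Z^{\f{d}{2}}}$ alone. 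By uniqueness in $Z^{\f{d}{2}}$ this high-regularity iterate coincides with the solution of part~(i), yielding the claimed a priori bound and the Lipschitz dependence.

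The principal obstacle I anticipate lies in the handling of the analytic nonlinearity: the constants in \eqref{bilinear-embeddings1}--\eqref{bilinear-embeddings2} must be absolute (independent both of the dyadic blocks and of $k$) so that the Taylor series of $Q_x$ and of $\Pi(x)$ can be summed in $Z^{\f{d}{2}}$ and freely commuted through $L$; this is precisely the reason for setting $W^{\f{d}{2}} = L(Z^{\f{d}{2}})$, which makes $L$ a quotient map and renders the commutation formally transparent. A secondary subtlety is that the terms $Q_u(\nabla^2 u,\nabla^2 u)$ and $Q_u(\nabla u,\nabla\Delta u)$ are not literally of the form $Q_u(v,w)$ with $v=w=u$, but their symbols still vanish on the resonance set displayed in the introduction, so that \eqref{null-structure} remains applicable after a suitable algebraic reorganization. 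Everything else amounts to combining these ingredients with the standard Banach fixed-point theorem.
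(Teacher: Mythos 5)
Your proposal is correct and follows essentially the same route as the paper: Picard iteration in $Z^{\f{d}{2}}$ via the linear bounds of Proposition \ref{Linea-propo}, the commutator identity \eqref{null-structure} combined with the algebra properties \eqref{bilinear-embeddings1}--\eqref{bilinear-embeddings2} and the by-construction continuity of $L:Z^{\f{d}{2}}\to W^{\f{d}{2}}$, power-series summation of the analytic coefficients, and persistence of regularity in $Z^{\f{d}{2}}\cap Z^s$ from \eqref{higher-regularity1}--\eqref{higher-regularity2} with smallness required only in $Z^{\f{d}{2}}$. The one cosmetic difference is that the paper runs the high-regularity iteration in the rescaled norm $\f{1}{M}\norm{\cdot}{Z^s}+\f{1}{\tilde{\delta}}\norm{\cdot}{Z^{\f{d}{2}}}$ with $M\sim\norm{u_0}{\dot{H}^s}+\norm{u_1}{\dot{H}^{s-2}}$, which is just a bookkeeping device for the linear-in-$Z^s$ bound you state.
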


	~~\\
	This theorem applies to \eqref{general}, however it is not clear if the solution maps to $ \mathbb{S}^{L-1}$ for all times. This is proven within the following (slightly more general) setup. Let $ N \subset \R^L $ be an embedded manifold and such that the nearest point projector $ \Pi : \mathcal{V}_{\varepsilon}(N) \to N $ is analytic on $N$ with a uniform lower bound on the radius of convergence. An explicit example is a uniformly analytic pertubation of the round sphere $ \mathbb{S}^{L-1}$.  % Remark \ref{Remy} and the formal conservation of the energy \eqref{energy}.
	\begin{Cor} \label{main2}  Let $(u_0, u_1) : \R^d \to TN$, i.e. $ u_0 \in N,~~ u_1 \in T_{u_0}N $, be a smooth map such that $ \supp(\nabla u_0, u_1 ) $ is compact,~$ d \geq 3$. Then if 
		\[ 	\norm{u_0  }{ \dot{B}_{\f{d}{2}}^{2,1}(\R^d)} + \norm{u_1}{\dot{B}_{\f{d}{2}-2}^{2,1}(\R^d)} \leq  \delta,\]
		where $ \delta = \delta(d, N) > 0 $ is sufficienty small, then \eqref{jap}, i.e.
		$$ \partial_t^2 u + \Delta^2 u \perp T_uN $$
		has a global smooth solution $ u : \R \times \R^d \to N  $ with $ (u(0), \partial_tu(0)) = (u_0, u_1)$.
		%(b) For data $ (u_0, u_1) : \R^3 \to T \mathbb{S}^{L-1}$ with $(u_0, u_1) \in H^s(\R^3) \times H^{s-2}(\R^3),~ s > 3$, there exists a  unique global solution 
		%$$ u  \in C(\R, H^s(\R^3)) \cap C^1(\R,  H^{s-2}(\R^3))$$
		%of \eqref{general}. 
	\end{Cor}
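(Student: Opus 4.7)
The plan is to apply Theorem \ref{main1} to the ambient equation \eqref{bihom} and then argue that the resulting solution is automatically $N$-valued. By a translation in $\R^L$ I may assume $0 \in N$ and $u_0 \to 0$ at infinity (using that $\supp(\nabla u_0, u_1)$ is compact). Part (i) of Theorem \ref{main1} then produces a global solution $u \in C(\R, \dot{B}^{2,1}_{d/2}) \cap \dot{C}^1(\R, \dot{B}^{2,1}_{d/2-2})$ of \eqref{bihom} with the prescribed initial data. Smoothness and compact-support-of-derivatives of $(u_0,u_1)$ put them in $\dot{H}^s \times \dot{H}^{s-2}$ for every $s$, so part (ii) upgrades $u$ to $C(\R, \dot{H}^s)$ for each $s$, and $u$ becomes classical by Sobolev embedding. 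Smallness \eqref{small} combined with the Besov embedding $\dot{B}^{2,1}_{d/2} \hookrightarrow L^\infty$ keeps $u(t,x)$ in a uniformly small tubular neighborhood of $N$, so $\Pi$ and $d\Pi_u$ are real-analytic along $u$ and the series expansion used to derive \eqref{bihom} converges.

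The heart of the corollary is the constraint $u(t,x) \in N$. For this I would invoke the classical short-time theory for the geometric equation $\partial_t^2 v + \Delta^2 v \perp T_v N$ with smooth initial data $(u_0, u_1) \in TN$, producing a smooth $N$-valued solution $v : [0, T^*) \times \R^d \to N$ for some $T^* > 0$. Because $v$ maps into $N$ one has $\Pi(v)=v$ and $d\Pi_v(Lv)=P_v(Lv)=0$ by the orthogonality, so $v$ also satisfies \eqref{bihom} on $[0, T^*)$. Since $v$ is smooth with compactly supported derivatives it lies in the Besov class of Theorem \ref{main1}, and the Lipschitz dependence on data in part (i) forces $u \equiv v$ on $[0, T^*)$; in particular $u$ is $N$-valued there.

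To extend to all $t$, consider the set $I = \{t \in \R : u(t,\cdot) \text{ maps into } N \text{ and } \partial_t u(t,\cdot) \text{ is tangent to } N\}$. It contains $0$, is closed by continuity of $u$, and is open because at any $t^* \in I$ the local classical theory can be restarted with data $(u(t^*), \partial_t u(t^*))$ and the same uniqueness identifies this restart with $u$ on a neighborhood of $t^*$. Connectedness then gives $I = \R$. The hard part will be the classical short-time solvability of $Lv \perp T_v N$ with $v \in N$: this is not provided by Theorem \ref{main1} and requires separate energy estimates for the quasilinear fourth-order system with the nonlinear constraint $v \in N$. For smooth, compactly-supported data such a $v$ can be produced by standard energy methods (differentiate the geometric equation and use $P_v$ to enforce the constraint in the energy identity), but writing out these estimates carefully is the genuinely non-trivial step not absorbed into the division theorem itself.
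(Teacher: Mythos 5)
Your reduction to Theorem \ref{main1} (translate so that $p=\lim_{|x|\to\infty}u_0(x)$ becomes the origin, solve \eqref{bihom} globally, upgrade to smoothness via part (ii)) matches the paper. But your argument for the constraint $u(t,x)\in N$ is not the paper's, and as written it has two genuine gaps. The paper never invokes a local existence theory for the constrained equation. Instead it works entirely inside $Z^{\f{d}{2}}$: writing $v=u-p$, it computes $L(u-\Pi(u))=-d\Pi_{v+p}(\mathcal{N}(v))$, subtracts the vanishing quantity $d\Pi_{v+p}(\mathcal{N}(\Pi(u)-p))=0$ (which holds because $\mathcal{N}(\Pi(u)-p)\perp T_{\Pi(u)}N$ and $\operatorname{im}(d\Pi_u)\subset T_{\Pi(u)}N$), and rearranges so that every term on the right carries a factor of $u-\Pi(u)$. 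The bilinear estimates then give $\norm{u-\Pi(u)}{Z^{\f{d}{2}}}\lesssim(1+\norm{v}{Z^{\f{d}{2}}})\norm{v}{Z^{\f{d}{2}}}\norm{u-\Pi(u)}{Z^{\f{d}{2}}}$, which forces $u=\Pi(u)$ once $\delta$ is small. This is a closed, quantitative absorption argument requiring nothing beyond what Sections \ref{sec:multi}--\ref{sec:proof} already provide.

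The first gap in your route is the identification $u\equiv v$ on $[0,T^*)$. The uniqueness coming out of the fixed point in Theorem \ref{main1} lives in the small ball of the \emph{global-in-time} space $Z^{\f{d}{2}}$; a smooth $N$-valued solution $v$ defined only on $[0,T^*)$ is not an element of that space, and ``Lipschitz dependence on the initial data'' compares two solutions produced by the iteration, not an iteration solution with an externally constructed one. To close this you would need either a local-in-time version of the $Z^{\f{d}{2}}$ theory with uniqueness in a class containing smooth solutions, or a separate (energy-based) unconditional uniqueness statement for \eqref{bihom}; neither is supplied. The second gap is your claim that the short-time $N$-valued theory follows from ``standard energy methods.'' The paper points out explicitly that the $\langle\nabla\Delta u,\nabla u\rangle u$ term obstructs a direct energy estimate and that the local wellposedness result of \cite{HLSS} required a Bona--Smith-type parabolic regularization; so this step is a citable but substantial external input, not a routine computation. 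Both difficulties are exactly what the paper's absorption argument is designed to avoid.
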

	\begin{Rem}
		The statement of Corollary \ref{main2} has to be rigorously corrected to $ u - p \in \dot{B}_{\f{d}{2}}^{2,1}(\R^d) $ for $ p = \lim_{ x \to \infty} u_0(x) $ since $ u_0 : \R^d \to N $ has no decay.
	\end{Rem}
	~~\\
	In \cite{HLSS} the authors proved local wellposedness of the Cauchy problem for \eqref{ELc}, resp. the expansion of \eqref{jap} for general compact target manifolds $N$, in the Sobolev space $H^k \times H^{k-2} $ for $k \in \Z$ with $ k > \lfloor \f{d}{2} \rfloor + 2 $. Here, the term involving $ \nabla^3 u $ in \eqref{general} did not allow for a direct energy estimate, but instead required a parabolic regularization similar as to the classical work of e.g. Bona-Smith.\\
	This approach, however, uses priori energy estimates that rely on the geometric condition \eqref{ELc}.
	In a recent preprint the author proved in a similar mannar via energy estimates that if $(u(0), \partial_tu(0))$ are smooth with compact support (ie. $u(0)$ is constant outside of a compact subset of $ \R^d$) and $ d \in \{ 1,2\}$, then \eqref{jap} has a global smooth solution.\\
	We further mention that Herr, Lamm and Schnaubelt proved the existence of a global weak solution, see \cite{HLS}, for the case $ N = \mathbb{S}^{L-1}$ by a Ginzburg-Landau approximation and the use of Noether's law for the sphere.
	
	\begin{Rem}\label{remmy} The equation \eqref{general} has parabolic scaling
		$$ u_{\lambda}(t, x) = u(\lambda^2 t, \lambda x),~ x \in \R^d, t \in \R. $$
		Thus it holds 
		$$ \lambda^{4-d}E(u(\lambda^2 t)) = E(u_{\lambda}(t))$$
		and   $ d = 4 $, the  (energy) critical dimension, is included in our results Theorem \ref{main1} and Corollary \ref{main2}. This is due to the larger Strichartz range for the dispersion rate $d/2$, whereas the low dimensional case for wave maps is more involved than \cite{tataru1} and has first been solved by Tataru in \cite{tataru2}.
	\end{Rem}
	\section{Linear estimates and  function spaces}\label{sec:Linear-est-func}
	\subsection{Notation}
	%We denote by $ \Sw{d}$ the Schwartz space on the $d$ dimensional Euclidean space $ \R^d$, i.e. a function $ u \in C^{\infty}(\R^d)$ satsifies $ u \in \Sw{d}$ if for any $ N \in \N_0$ there exists $ C_N > 0 $ with
	%\begin{align}
	%\sup_{|\alpha|, | \beta| \leq N }\sup_{ x\in \R^d} | x^{\beta}\partial_x^{\alpha} u(x) | \leq C_N,
	%\end{align}
	%where $ \alpha, \beta \in \N_0^d $ and $x^{\beta} = \Pi_i x_i^{\beta_i}$. 
	For real $A,B \geq 0 $ we write $ A \lesssim B$ short for  $ A \leq c B $, where $ c > 0 $ is a constant. Likewise we write $ A \sim B $ if there holds $ A \lesssim B $ and $ B \lesssim A$. The space of Schwartz functions will be denoted by $ \mathcal{S}$ and 
	the Fourier transform for $ u \in \Sw{d}$ will be  
	\begin{align}
		\Ft(u)(\xi)  = \int_{\R^d} e^{- i  x \cdot \xi } u(x)~dx,
	\end{align}
	for which we write $\hat{u}(\xi) = \Ft(u)(\xi)$. We indicate by $ \mathcal{F}_{x'}(\xi) $ that the Fourier transform is taken over $ x'$ where $x = (x', \tilde{x})$ if necessary and let $ \varphi \in C^{\infty}(\R) $ be a Littlewood-Paley function, i.e. such that
	\begin{align}
		& \supp(\varphi) \subset  ( \f{1}{2}, 2  ),~~\varphi \in [0,1],~~~ \text{and}~~\sum_{j \in \Z} \varphi( 2^{-j} s) = 1,~~\text{for}~ s > 0.
	\end{align}
	We define the multiplier  $P,Q$ for $ u \in \mathcal{S}'(\R^d),~ v \in \mathcal{S}'(\R^{1+d})$  and dyadic numbers $ \lambda, \mu$ by
	\begin{align*}
		&\widehat{P_{\lambda}( \nabla)u}(\xi) = \varphi(| \xi|/\lambda )\hat{u}(\xi),~~~\widehat{P_{\lambda}(D)v}(\tau, \xi) = \varphi(( \tau^2 + |\xi|^4)^{\f{1}{4}}/ \lambda )\hat{v}(\tau, \xi),\\[8pt]
		&\widehat{Q_{\mu}(D)v}(\tau, \xi) = \varphi(w(\tau, \xi)/ \mu  )\hat{v}(\tau, \xi),\\[8pt]
		&P_{\leq \lambda}= \sum_{ \tilde{\lambda}\leq \lambda } P_{\tilde{\lambda}},~~Q_{\leq \mu}= \sum_{ \tilde{\mu}\leq \mu } Q_{\tilde{\mu}},
		%&P_{> \lambda}= I - P_{\leq \lambda},~~Q_{> \mu}= I - Q_{\leq \mu}.
	\end{align*}
	where 
	$$ w(\tau, \xi)  = \frac{|\tau^2 - |\xi|^4|}{(\tau^2 + |\xi|^4)^{\f12}} \sim ||\tau| - \xi^2|,~~~~~(\tau^2 + |\xi|^4)^{\f14} \sim (|\tau| + \xi^2)^{\f12}.$$
	Further, we write $ v_{\lambda} = P_{\lambda}v = P_{\lambda}(D)v  ,~ P_{\lambda, \leq \mu}  = P_{\lambda} Q_{\leq \mu}(D) $ for short and define
	\begin{align*}
		A_{\lambda} = \{ (\tau, \xi)~|~ \lambda \slash 2 \leq (\tau^2 + \xi^4 )^{\f14} \leq 2 \lambda \},~~~~A_{\lambda}^d = \{  \xi~|~ \lambda \slash 2 \leq  |\xi| \leq 2 \lambda \}.
		%&A_{\lambda, \mu} = \{ (\tau, \xi) \in A_{\lambda}~|~ \mu \slash 2 \leq | |\tau| - \xi^2 | \leq 2 \mu \},~~~
		% A_{\lambda, \mu}^{\pm} = A_{\lambda, \mu } \cap \{ \pm \tau \geq 0\}\\[6pt]
		%&A_{\lambda, \leq \mu}^{\pm} = \bigcup_{\tilde{\mu}\leq \mu} A_{\lambda, \tilde{\mu}}^{\pm}.
	\end{align*}
	For a distribution $ f \in \mathcal{S}'(\R^{d+1})$ we say  $ f$ is \emph{localized at frequency} $\lambda \in 2^{\Z} $ if 
	$ \hat{f} $ has support in the set $  A_{\lambda}$
	and a similar notation is used for  $ g \in \mathcal{S}'(\R^d)$ and $  A_{\lambda}^d$. 
	In addition, we need to localize in the sets
	\[   A_e : = \left \{ \xi~|~ \xi \cdot e \geq \f{|\xi|}{\sqrt{2}} \right \},~~ e \in \mathbb{S}^{d-1}, \]
	in order to exploit the smoothing effect for the linear equation. Thus, as in \cite{bejenaru1}, we choose $ \mathcal{M} \subset \mathbb{S}^{d-1}$ with $ e \in \mathcal{M} \Rightarrow - e \in \mathcal{M}$ such that
	\begin{align}
		& \R^d = \bigcup_{ e \in \mathcal{M}} A_e,~~\forall e \in \mathcal{M}~:~\# \{ \tilde{e} \in \mathcal{M}~|~ A_e \cap A_{\tilde{e}} \neq \emptyset~\} \leq K,
	\end{align}
	with a constant $K = K_d > 0 $. Further we require a smooth partition of unity $ \{ h_e\}_{ e \in \mathcal{M}}$  subordinate to $\{ A_e\}_{ e \in \mathcal{M}}$, i.e. 
	\begin{align}
		&h_e \in C^{\infty}(\R^d),~~\supp(h_e ) \subset A_e,~~ h_e \in [0,1]\\[2pt]
		&\sum_{e \in \mathcal{M}} h_e(\xi ) = 1,~~ \xi \in \R^d \backslash \{0\}.
	\end{align}
	We note that this is possible since in particular for $ x \in \R^d \backslash \{ 0 \} $ we have  $ x \in A_e $ if and only if $  \measuredangle(x, e) \leq \f{\pi}{4} $.
	We define the respective Fourier multiplier by
	\begin{align} 
		\widehat{P_e(\nabla) v}(\tau, \xi) = h_e(\xi ) \hat{v}(\tau, \xi),~~ v \in \mathcal{S}'(\R^{d+1}).
	\end{align}
	Finally, we choose  $\chi \in C^{\infty}(\R^{d+1}) $ such that
	\begin{align*}\chi(\tau, \xi) =
		\begin{cases}
			1 &~~| \tau^2 - |\xi|^4 | < \frac{\tau^2 + |\xi|^4}{100},\\[3pt]
			0 &~~| \tau^2 - |\xi|^4 | > \frac{\tau^2 + |\xi|^4}{10}.
		\end{cases}
	\end{align*}
	In order to have $\chi$ invariant under parabolic scaling, we choose 
	$ \chi(\tau, \xi) = \eta ( |\f{\tau^2 - |\xi|^4}{\tau^2 + |\xi|^4} |),$
	where $ \eta \in C^{\infty}(\R) $ with $ 0 \leq \eta \leq 1 $ and such hat 
	$\eta(x) = 1 $ if $ |x| < 1/100$ and $ \eta(x) = 0 $ if $ |x| > 1/10$.
	We then define
	\begin{align}
		&\widehat{P_0v}(\tau, \xi) = \chi(\tau,  \xi)\hat{v}(\tau, \xi),~~~\widehat{(1 - P_0)v}(\tau, \xi) = ( 1 - \chi(\tau,  \xi))\hat{v}(\tau, \xi).
	\end{align}
	Thus, we have 
	\begin{align}
		&\supp(\widehat{P_0 v}) \subset   \bigg\{ (\tau, \xi)~|~ | | \tau| - \xi^2| \leq  \frac{|\tau| + \xi^2}{10} \bigg\},\\[5pt]
		&\supp(\widehat{(1 - P_0)v}) \subset \bigg\{ (\tau, \xi)~|~ | | \tau| - \xi^2| \geq   \frac{|\tau| + \xi^2}{100} \bigg\}.
	\end{align}
	Especially, measuring the distance to the characteristic surface $P$,
	$$ \dist ((\tau, \xi), P) \sim \frac{| | \tau| - \xi^2|}{(|\tau| + \xi^2)^{\f12}},~~P = \big\{(\tau, \xi)~|~ \tau^2 = \xi^4 ~\big\},$$
	we infer that  $ (1 - P_0)v$ (with $v$ being localized at frequency $\lambda$) is localized where 
	$$  \dist( (\tau, \xi), P)  \sim \lambda,$$
	such that frequency $ (\tau^2 + |\xi|^4)^{\f14} \sim \lambda$ and modulation $ ||\tau| - \xi^2| \sim \mu$  are of comparable size $ \mu \sim \lambda $. For $P_0v$ we have localization where
	$$\dist( (\tau, \xi), P) = \mathcal{O}(\lambda),$$
	with a small constant that suffices to obtain additional smoothing in the linear estimates of the following sections.
	Further, we use the homogeneous spaces  $ \dot{B}_s^{2,p}(\R^d),~ 1 \leq p < \infty$ given by the closure of
	\[ \norm{u}{\dot{B}_s^{2,p}}^p = \sum_{ \lambda \in 2^{\Z}} \lambda^{sp}\norm{P_{\lambda}u }{L^2_{x}}^p,~ u \in  \mathcal{S}(\R^d).\]
	Hence we have $ \dot{H}^s(\R^d) \sim \dot{B}_s^{2,2}(\R^d)$ for $s > \f{d}{2}$ and $\dot{B}_{\f{d}{2}}^{2,1}(\R^d) \subset L^{\infty}(\R^d) $ is a well-defined Banach space.\\[5pt] 
	Let $ \lambda$ be a fixed dyadic number. Then for $ b \leq 1 $ and $ p \in [1, \infty)$ we set
	\begin{align}\label{normm}
		\norm{f}{X_{\lambda} ^{b,p}}^p = \sum_{\mu \in 2^{\Z}}\mu^{pb} \norm{Q_{\mu}(D)f}{L^2_{t, x}}^p,
	\end{align}
	and denote by $ X_{\lambda} ^{b,p}$ the closure of the (semi-)norm in $\mathcal{S}$ restricted to functions $f$ localized at frequency $\lambda$. This definition is extended as usual to the case $  p = \infty $.  We observe that $ f \in X_{\lambda} ^{b,p} $ has the representation
	\begin{align}\label{decom}
		f = \sum_{\mu \lesssim \lambda^2} h_{ \mu} + h,
	\end{align}
	where $h$ is a solution of $Lh = 0$ (with initial data localized at frequency $ \lambda$). Thus $f$ is only well-defined up to homogeneous solutions $Lh = 0$. In the following, we will correct \eqref{normm} by $ \norm{h}{L^{\infty}_tL_x^2} + \norm{\partial_t h}{L^{\infty}_t\dot{H}_x^{-2}}$  as a limiting dyadic block ($\mu \searrow 0$),  where $ L h = 0$ and $ h(0) = f(0), ~\partial_t h(0) = \partial_t f(0)$.\\[3pt]
	More precisely, the atomic decomposition \eqref{decom} has the form
	\begin{align} \label{decomposition} 
		& h_{\mu}(t,x) =  \int_{- \infty}^{\infty} \frac{e^{it s}}{|s|^b} h^{\mu}(s, x) ~ds,\\[2pt]
		&\norm{f}{\dot{X}_{\lambda}^{b,p}}^p \sim \sum_{\mu \lesssim \lambda^2} \left( \int_{- \infty}^{\infty} \label{norm-decomposition}  \norm{h^{\mu}(s,x)}{L^2_x}^2~ds \right)^{\f{p}{2}}.
	\end{align}
	Here the $ h^{\mu}(s, \cdot) $ solves $  L h^{\mu}(s, \cdot) = 0 $ for some $L^2 \times \dot{H}^{-2}$ initial data and is localized where $ s \sim \mu$. Further \eqref{norm-decomposition} only holds up to $ \mu = 0$ as mentioned above. We infer \eqref{decomposition} and \eqref{norm-decomposition} by foliation, which also shows that the sum in \eqref{decom} is well-defined distributionally for the cases $ b < \f{1}{2} $ and $p \geq 1 $ or $ b = \f{1}{2}$ and $ p = 1$. We will use the foliation explicitly in the proof of Lemma \ref{trans-lemma}.\\[4pt]
	\subsection{Linear estimates}\label{subsec:Linear-est}
	The goal of this section is to develope estimates for the linear equation
	\begin{align}
		\begin{cases}
			~~\partial_{t}^2u(t,x) + \Delta^2  u(t,x) = F(t,x)& ~~(t,x) \in \R \times \R^d \label{equation-linear}\\[5pt]
			~~u[0] = (u(0), \partial_t u(0)) = (u_0, u_1),& ~~\text{on}~\R^d,
		\end{cases}
	\end{align}
	with data $ F, u_0, u_1$. In the following we provide \emph{lateral Strichartz estimates} and a maximal function estimate for the Cauchy problem \eqref{equation-linear} in case $ F \in L^1_t L^2_x$. The main results of this section summarize all necessary homogeneous bounds in Lemma \ref{linear-bounds} and the inhomogeneous bounds in Lemma \ref{linear-bounds-inhomogeneous}. Further we give a proof of the trace estimate in Lemma \ref{trans-lemma}.\\[10pt]
	First, we start by recalling  the classical Strichartz estimate, which follows similar as for the linear wave equation. Since we did not find it in the literature for \eqref{equation-linear}, we briefly state the estimate.
\begin{Def} We say that a pair $(p,q) $ with $ 1 \leq p, q \leq \infty$ is \emph{admissible} in dimension $d \in \N,~ d \geq 2$ if  there holds
	\begin{align}
		\f{2}{p}+\f{d}{q} \leq \f{d}{2}.
	\end{align}
	and $ (p,q) \neq (2, \infty)$ in the case of $ d = 2$.	
\end{Def}

\begin{Lemma}[Strichartz]
	Let $u$ be a weak solution of \eqref{equation-linear} for data $u_0, u_1, F$. Then there holds
	\begin{align}\label{the-classical-strichartz}
		\norm{u}{C(\R,\dot{H}^{\gamma})}  + \norm{u}{L^p_t L^q_x} \lesssim \norm{u_0}{\dot{H}^{\gamma}} + \norm{u_1}{\dot{H}^{\gamma-2}} + \norm{F}{L^{\tilde{p}'}_tL^{\tilde{q}'}_x},
	\end{align}
	where $(p,q), (\tilde{p}, \tilde{q})$ are admissible pairs with $ q, \tilde{q} < \infty $ and $ \gamma \in [0, 2]$  satisfies
	\begin{align}\label{gap}
		\f{2}{p} + \f{d}{q} = \f{d}{2} - \gamma = \f{2}{\tilde{p}'} + \f{d}{\tilde{q}'} - 4
	\end{align}
\end{Lemma}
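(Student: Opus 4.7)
The plan is to reduce to the classical Schrödinger Strichartz theorem (Keel--Tao) via the factorization $L = (\partial_t - i\Delta)(\partial_t + i\Delta)$, which is realized explicitly by the propagator $S(t)$ displayed in \eqref{Duhamel}. Splitting Duhamel's formula as
\[
u(t) = \tfrac12(e^{-it\Delta}+e^{it\Delta})\,u_0 + \tfrac{1}{2i}(e^{-it\Delta}-e^{it\Delta})(-\Delta)^{-1}u_1 + \tfrac{1}{2i}\!\int_0^t\!\bigl(e^{-i(t-s)\Delta}-e^{i(t-s)\Delta}\bigr)(-\Delta)^{-1}F(s)\,ds,
\]
the homogeneous part reduces to free Schrödinger evolutions acting on data in $\dot H^\gamma$, after noting that $u_1 \in \dot H^{\gamma-2}$ is equivalent to $(-\Delta)^{-1}u_1 \in \dot H^\gamma$. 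The Keel--Tao estimates for $e^{\pm it\Delta}$ at regularity gap $\gamma$ --- obtained from the sharp admissible case via a spatial Sobolev shift from a sharp-admissible $(p,q_0)$ with $\tfrac{2}{p}+\tfrac{d}{q_0}=\tfrac{d}{2}$ down to the non-sharp $(p,q)$ --- then give the homogeneous $L^p_t L^q_x$ bound, while the $C(\R,\dot H^\gamma)$ part is immediate from $L^2$-unitarity of $e^{\pm it\Delta}$.

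For the inhomogeneous piece I would run the standard $TT^*$ argument for $e^{\pm it\Delta}$ with both pairs at regularity gap $\gamma$. The non-sharp retarded bound takes the form
\[
\Bigl\|\int_0^t e^{i(t-s)\Delta} G(s)\,ds\Bigr\|_{L^p_tL^q_x} \lesssim \bigl\||\nabla|^{2\gamma} G\bigr\|_{L^{\tilde p'}_tL^{\tilde q'}_x},
\]
where the $|\nabla|^{2\gamma}$ on the right absorbs the two Sobolev shifts from the non-sharp endpoints on each factor of the $TT^*$ composition. Specializing $G = (-\Delta)^{-1}F$ replaces $|\nabla|^{2\gamma}G$ by $|\nabla|^{2\gamma-2}F$, and a further spatial Sobolev embedding $\||\nabla|^{2\gamma-2}F\|_{L^{\tilde q'}_x} \lesssim \|F\|_{L^{r}_x}$ with $\tfrac{d}{r} = \tfrac{d}{\tilde q'} + 2 - 2\gamma$ converts the estimate into the scaling
\[
\tfrac{2}{\tilde p'}+\tfrac{d}{r} \;=\; \bigl(\tfrac{d}{2}+2+\gamma\bigr) + (2-2\gamma) \;=\; \tfrac{d}{2}+4-\gamma,
\]
which is precisely \eqref{gap} after relabeling $r$ as $\tilde q'$. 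The Christ--Kiselev lemma then transfers the full-line $TT^*$ bound to the retarded integral $\int_0^t$, which is legitimate as long as $\tilde p' < p$; the $C(\R,\dot H^\gamma)$ contribution of the inhomogeneous term follows in parallel by taking the output energy pair $(\infty,2)$ in the same $TT^*$ scheme, and the Minkowski endpoint $\tilde p'=1$ can alternatively be obtained by pulling the norm inside $\int_0^t\|\cdots\|_{L^p_tL^q_x}\,ds$ and invoking the homogeneous bound on each slice.

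The main obstacle I anticipate is the bookkeeping: the two Sobolev shifts (one absorbing $\gamma$ on the $u$ side, one coming from $(-\Delta)^{-1}$ on the $F$ side) must combine cleanly with the Christ--Kiselev step, while the Keel--Tao diagonal endpoint must be treated separately and the exclusion $(p,q)\neq (2,\infty)$ in $d=2$ is inherited from the known failure of the $L^2_tL^\infty_x$ Schrödinger endpoint. Once these compatibility checks are verified, \eqref{the-classical-strichartz} together with the scaling relation \eqref{gap} follows directly by applying Keel--Tao to each of the three terms in the splitting of $S(t)$.
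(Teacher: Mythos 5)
Your top-level reduction is the same as the paper's: diagonalize $S(t)$ into $e^{\pm it\Delta}$ through Duhamel's formula and quote Schr\"odinger Strichartz theory. The execution of the key step differs, though. The paper first performs a Littlewood--Paley decomposition (this is exactly why $q,\tilde q<\infty$ is assumed) and the parabolic rescaling $u_\lambda(t,x)=u(\lambda^2t,\lambda x)$ to normalize to unit frequency; there $(-\Delta)^{-1}$ is a bounded multiplier, and the entire non-sharp admissible range together with the gap $\gamma$ falls out of the \emph{truncated} dispersive bound $\|U_\pm(t)f\|_{L^\infty}\lesssim(1+|t|)^{-d/2}\|f\|_{L^1}$ for the frequency-localized kernel (decay of the Fourier transform of a surface measure on the paraboloid), followed by $TT^*$ and interpolation/Keel--Tao. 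You instead stay global and trade the frequency localization for Sobolev shifts plus Christ--Kiselev; on the homogeneous side this is a perfectly legitimate and somewhat more self-contained alternative.

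There is, however, a concrete problem with your inhomogeneous bookkeeping when $\gamma>1$. Running $TT^*$ with \emph{both} factors at gap $\gamma$ yields $\||\nabla|^{2\gamma}G\|$ on a dual pair with $\tfrac{2}{\tilde p'}+\tfrac{d}{\tilde q'}=\tfrac{d}{2}+2+\gamma$, and your final step $\||\nabla|^{2\gamma-2}F\|_{L^{\tilde q'}_x}\lesssim\|F\|_{L^r_x}$ is a Hardy--Littlewood--Sobolev inequality of order $2-2\gamma$, which requires $2-2\gamma\ge0$. The lemma allows $\gamma\in[0,2]$, and the case the paper actually relies on downstream (Corollary \ref{Strichh}, with $\tilde p'=1$, $\tilde q'=2$) is precisely $\gamma=2$, where your embedding runs in the wrong direction. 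The repair stays entirely inside your framework: distribute the gaps asymmetrically in the $TT^*$ composition, taking gap $\gamma$ on the solution factor and gap $\tilde\gamma=2-\gamma$ on the data factor (these are exactly the gaps dictated by \eqref{gap}); composing through $L^2$ in the middle then produces $|\nabla|^{\gamma+\tilde\gamma}=|\nabla|^{2}$, which cancels $(-\Delta)^{-1}$ exactly and gives \eqref{the-classical-strichartz} with no extra embedding. (Your Minkowski fallback does rescue the particular case $\tilde p'=1$, $\tilde q'=2$, since there $\|(-\Delta)^{-1}F(s)\|_{\dot H^{2}}=\|F(s)\|_{L^2}$, but not general $\tilde q'$ with $\gamma>1$.) With that correction, and with the diagonal case $\tilde p'=p=2$ handled by Keel--Tao rather than Christ--Kiselev as you note, the argument closes.
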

\begin{proof}
	
	We prove the inequality for  $ P_{\lambda}(\nabla)u,~ P_{\lambda}(\nabla)F$, where $ \lambda $ is a dyadic number. Then \eqref{the-classical-strichartz} follows by the Littlewood-Paley theorem since $ q, \tilde{q} < \infty $. Further, \eqref{the-classical-strichartz} is invariant under  scaling $$ u_{\lambda}(t,x) = u(\lambda^2t, \lambda x),~ F_{\lambda} = \lambda^{4}F(\lambda^2 t, \lambda x),$$
	which follows from \eqref{gap}. Especially, since $ (P_{\lambda}u)_{\lambda^{-1}} = P_1 u_{\lambda^{-1}}$, we assume $ \lambda \sim 1$. By Duhamels formula we obtain 
	\begin{align*}
		u(t) =& \cos( -t \Delta )u_0 + \frac{\sin(- t\Delta)}{(- \Delta)}u_1 + \int_0^t \frac{\sin(-(t-s) \Delta)}{ (-\Delta)} F(s)~ds.
	\end{align*}
	Therefore, as used above already, we decompose
	$$ \sin(-t\Delta)f = \f{1}{2i}(  e^{- i t \Delta}f - e^{ i t \Delta}f),~~\cos( - t \Delta)f = \f{1}{2}(  e^{- i t \Delta}f + e^{ i t \Delta}f),$$
	and by $ \lambda \sim 1 $ this can hence be estimated via
	$$ \widehat{U_{\pm}(t)f}(\xi) = \chi\{ t \geq 0 \} e^{\mp i t \xi^2} \psi(|\xi|) \hat{f}(\xi),~~~ f \in \mathcal{S}(\R^d),$$
	where $ \psi \in C^{\infty}_c((0, \infty))$ with $\psi(x) = 1 $ for $ x \in \supp(\varphi)$ and $ \varphi $ is a Littlewood-Paley function.
	Clearly $U_{\pm}(t)$ extends to $L^1(\R^d)\cap L^2(\R^d)$ satisfying the energy bound and for the dispersive estimate, we use 
	$$ U_{\pm}(t)f = K_{\pm}(t, \cdot ) \ast_d f,~~ K_{\pm}(t, x) = \chi\{ t \geq 0 \} \f{1}{(2 \pi)^d}\int_{\R^d} e^{ix \cdot \xi \mp i t |\xi |^2} \psi(|\xi|)~d\xi.$$
	The kernel then applies to the classical theorem for the decay of the Fourier transform of surface carried measures  on $ P_{\pm} = \{(\tau, \xi)~|~\pm \tau + \xi^2 = 0 \}$. Especially all principle curvature functions on $P$ are non-vanishing ($(\tau, \xi) \neq (0,0)$) and thus
	$$ \norm{U_{\pm}(t)f}{L^{\infty}(\R^d)} \lesssim ( 1 + |t|)^{- \f{d}{2}} \norm{f}{L^1(\R^d)}.$$
	In particular this implies the homogeneous and the inhomogeneous estimate by the $TT^*$ principle and interpolation (respectively Keel-Tao's endpoint argument) . Thus \eqref{the-classical-strichartz} holds on $ [0, \infty)$ and we apply this inequality  to $ u_-(t,x) =  u(-t,x),~ F_-(t,x) = F(-t, x) $, which in turn implies the full estimate. It remains to prove  $u \in  C_tL^2_x$, which follows analogously as for the wave equation.
\end{proof}
~~\\
We note that due to $ (- \Delta)^{-1}$ in Duhamel's formula, the frequency localization in the proof is evitable in contrast to the direct decay estimate for the kernel of the Schr\"odinger group.
\begin{Corollary}\label{Strichh}
	Let $ u$ have Fourier support in $ A_{\lambda}$. Then 
	\begin{align}\label{Strichhbound}
		\norm{u}{S_{\lambda}}  \lesssim \norm{u(0)}{L^2} + \norm{\partial_t u(0)}{\dot{H}^{ -2}} + \lambda^{-2} \norm{Lu}{L^1_t L^2_x},
	\end{align}
	%	where $ (p,q),~(\tilde{p}, \tilde{q})$ are admissible pairs and
	%	$$ \f{2}{p} + \f{n}{q} = \f{n}{2} - \gamma = \f{2}{\tilde{p}'} + \f{n}{\tilde{q}'} -2.$$
	where
	\begin{align*}
		S_{\lambda} = \big\{ f \in C_t L^2_x~|~ \supp(\hat{f}) \subset A_{\lambda},~\norm{f}{S_{\lambda}} = \ \sup_{(p,q)} \big( \lambda^{\f{2}{p} + \f{d}{q} - \f{d}{2} }\norm{f}{L^p_tL^q_{x}} \big) < \infty ~\big\}
	\end{align*}
	and the supremum is taken over admissible pairs $(p,q)$.
\end{Corollary}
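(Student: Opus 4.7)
My plan is to rescale to $\lambda\sim 1$, apply the Schr\"odinger-type Strichartz estimates already developed in the proof of the preceding Lemma, and then use Bernstein to extend to all admissible pairs. Under the parabolic rescaling $u(t,x)\mapsto u(\lambda^{-2}t,\lambda^{-1}x)$ the Fourier annulus $A_{\lambda}$ is mapped onto $A_1$, and the symbol of $L$ is preserved up to an overall factor; a direct computation shows that the $S_{\lambda}$-norm and the right-hand side of \eqref{Strichhbound} both rescale by $\lambda^{d/2}$ (here one uses frequency localization to replace the $L^2$-norm of $u_1$ by $\lambda^2\|u_1\|_{\dot H^{-2}}$). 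Hence it suffices to prove \eqref{Strichhbound} at frequency $\sim 1$.

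At frequency $\sim 1$, I apply Duhamel's formula and split the propagators $\cos((-\Delta)t)$ and $\sin((-\Delta)t)$ into $e^{\pm it\Delta}$ as in the proof of the preceding Lemma. The dispersive bound $\|U_{\pm}(t)f\|_{L^{\infty}}\lesssim (1+|t|)^{-d/2}\|f\|_{L^1}$ established there, combined with the $TT^{*}$ principle, yields the sharp Schr\"odinger--Strichartz estimate $\|e^{\pm it\Delta}u_0\|_{L^p_tL^q_x}\lesssim \|u_0\|_{L^2}$ for every \emph{sharp} admissible pair, i.e.\ with $\f{2}{p}+\f{d}{q}=\f{d}{2}$; for the $u_1$-piece the factor $(-\Delta)^{-1}$ together with frequency localization reproduces exactly the $\dot H^{-2}$-norm. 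For the Duhamel integral I invoke the Keel--Tao endpoint inhomogeneous Strichartz estimate
\begin{align*}
	\Big\|\int_0^t e^{\pm i(t-s)\Delta}G(s)\,ds\Big\|_{L^p_tL^q_x}\lesssim \|G\|_{L^1_tL^2_x},
\end{align*}
valid for every sharp admissible $(p,q)$ in dimensions $d\geq 3$ (and trivially for $(\infty,2)$ in $d=2$), applied to $G=Lu$; the remaining $(-\Delta)^{-1}$ then produces the factor $\lambda^{-2}$ after undoing the rescaling.

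This settles the estimate for sharp admissible pairs. For a general admissible $(p,q)$, let $q_0\le q$ be the unique sharp exponent with $\f{2}{p}+\f{d}{q_0}=\f{d}{2}$. Bernstein's inequality applied to the frequency-localized $u$ gives
\begin{align*}
	\|u\|_{L^p_tL^q_x}\lesssim \lambda^{d(\f{1}{q_0}-\f{1}{q})}\|u\|_{L^p_tL^{q_0}_x}=\lambda^{\f{d}{2}-\f{2}{p}-\f{d}{q}}\|u\|_{L^p_tL^{q_0}_x},
\end{align*}
so that the power of $\lambda$ exactly absorbs into the $S_{\lambda}$-weight. Continuity $u\in C_tL^2_x$ follows from the $(p,q)=(\infty,2)$ endpoint together with a standard density argument. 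The main technical point is the Keel--Tao endpoint $(p,q)=(2,\f{2d}{d-2})$ in dimensions $d=3,4$, which cannot be reached by interpolating non-endpoint Schr\"odinger estimates but is precisely the input already invoked in the proof of the preceding Lemma.
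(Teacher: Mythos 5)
Your argument is correct and follows essentially the same route as the paper: the paper's proof simply invokes the classical Strichartz estimate \eqref{the-classical-strichartz} (whose proof already contains the rescaling, the splitting into $e^{\pm it\Delta}$, the dispersive bound, $TT^*$ and the Keel--Tao endpoint) together with Bernstein's inequality $\norm{u_0}{\dot{H}^{\gamma}}+\norm{u_1}{\dot{H}^{\gamma-2}}\lesssim \lambda^{\gamma}(\norm{u_0}{L^2}+\norm{u_1}{\dot{H}^{-2}})$ on the data, and treats the non-sharp pairs (in particular $q=\infty$) by spatial Bernstein/Sobolev embedding, exactly as you do. The only cosmetic difference is that you apply Bernstein to the solution at fixed time to pass from sharp to general admissible pairs, whereas the paper shifts the regularity $\gamma$ of the data inside the Lemma; these are equivalent.
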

\begin{proof}
	From Bernstein's estimate
	$$ \norm{u_0}{\dot{H}^{\gamma}} + \norm{u_1}{\dot{H}^{\gamma-2}} \lesssim \lambda^{\gamma}( \norm{u_0}{L^2} + \norm{u_1}{\dot{H}^{-2}}),$$
	which by \eqref{the-classical-strichartz} and the gap \eqref{gap} implies the desired estimate for all admissible pairs $(p,q)$ with $ q < \infty $. For the case $ q = \infty$ in $ d \geq 3$  we estimate by Soblev embedding (or Bernstein's bound) for any $ q \geq \f{2d}{d-2}$
	\begin{align*}
		\lambda^{-\f{d-2}{2}} \norm{u}{L^2_t L^{\infty}_x} &\lesssim \lambda^{-\f{d}{2} + 1 + \f{d}{q}} \norm{u}{L^2_t L^q_x}\\
		&\lesssim \norm{u(0)}{L^2} + \norm{\partial_t u(0)}{\dot{H}^{ -2}} + \lambda^{-2} \norm{Lu}{L^1_t L^2_x}.
	\end{align*}
\end{proof}
The Corollary \ref{Strichh} is not sufficient for our proof of bilinear estimates in Section \ref{sec:multi} and we additionally need to apply a well known smoothing estimate for the Schr\"odinger group.

For this reason, we define the following norm (see also \ref{appendix})
\begin{align}
	\norm{u}{L^p_e L^q_{t,e^{\perp}}}^p = \int_{-\infty}^{\infty} \left( \int_{[e]^{\perp}} \int_{- \infty}^{\infty}  | u( t, r e + x)|^q d t~d x \right)^{\f{p}{q}}d r,~~ e \in \mathbb{S}^{d-1}.
\end{align}  
In order to introduce the necessary notation, we recall Bejenaru's calculus from \cite{bejenaru1} (see also the Appendix \ref{appendix}) for the Cauchy problem \eqref{equation-linear}. In the case $ F = 0$ we have 
$$ \supp(\hat{u}) \subset P = \{ (\tau, \xi)~|~ \tau^2 - |\xi|^4 = 0\}, $$
which is a paraboloid in the variables $ (\tau, \xi)$. More precisely, denoting by $\Xi = (\tau, \xi) $ the Fourier variables, we split the symbol (in case of general $F$)
\begin{align}
	\widehat{F}(\tau, \xi) = L(\Xi) \hat{u}(\tau, \xi) = -(\tau - \xi^2)(\tau + \xi^2)\hat{u}(\tau, \xi).
\end{align}  
Hence, we further split in the Fourier space into 
\begin{align}\label{split}
	&- (\tau + \xi^2)^{-1}\widehat{F}(\tau, \xi)\chi\{ \tau > 0\}  = (\tau - \xi^2)\hat{u}(\tau, \xi)\chi\{ \tau > 0\},\\ \label{ssplit}
	&( - \tau + \xi^2)^{-1}\widehat{F}(\tau, \xi)\chi\{ \tau \leq  0\}  =  ( \tau + \xi^2)\hat{u}(\tau, \xi)\chi\{ \tau \leq 0\},~~|\xi| \neq 0,
\end{align}
and introduce coordinates adapted to a characteristic unit normal $ e \in \mathbb{S}^{d-1} $. That means we use the change of coordinates
$$ \Xi \mapsto (\tau, \xi \cdot e, \xi - (\xi \cdot e) e) =: ( \tau, \xi_e, \xi_{e^{\perp}}) =: \Xi_e,$$ 
and the sets
\begin{align}
	&A_e  = \left \{ \xi~|~ \xi_e \geq \f{|\xi|}{\sqrt{2}} \right \},~~~B_e : = \left \{ (\tau,\xi)~|~ |  |\tau| - \xi^2| \leq \f{|\tau| + \xi^2}{10},~ \xi \in A_e \right \}\\[5pt]
	&B^{\pm}_e : = \left \{ (\tau,\xi)~|~ | \pm \tau - \xi^2| \leq \f{|\tau| + \xi^2}{10},~ \xi \in A_e \right \} = B_e \cap \{ \pm \tau > 0 \} \cup \{ (0,0)\}.
\end{align}
Then for any $ (\tau, \xi) \in B_e $, we clearly have
\begin{align}\label{facts}
	|\tau| - \xi_{e^{\perp}}^2 \geq 0,~~ \xi_e \sim (|\tau| + \xi^2)^{\f12},~~\xi_e + \sqrt{ |\tau| - \xi_{e^{\perp}}^2} \sim (|\tau| + \xi^2)^{\f12},
\end{align} 
and similar for $ \pm \tau$ on $  B_e^{\pm}$.\\[5pt]
Especially, the latter two quantities in \eqref{facts} are controlled by frequency. Also, if we assume that $ \supp(\hat{u}) \subset B_e$, then for $ |\tau| + \xi^2 > 0$, we have from \eqref{split} and \eqref{ssplit} 
\begin{align}\label{ultrasplit}
	- ( |\tau| + \xi^2)^{-1}\left( \xi_e + \sqrt{ |\tau| - \xi_{e^{\perp}}^2}\right)^{-1} \widehat{F}(\tau, \xi)  = \left(\sqrt{ |\tau| - \xi_{e^{\perp}}^2} - \xi_e \right) \hat{u}(\tau, \xi),
\end{align}
%resp. considering the sign of $\tau$ 
%\begin{align}\label{split2}
%&(\tau + \xi^2)^{-1}\left( \xi_e + \sqrt{ \tau - \xi_{e^{\perp}}^2}\right)^{-1} \widehat{F}(\tau, \xi)  = %\left(\sqrt{ \tau - \xi_{e^{\perp}}^2} - \xi_e \right) \hat{u}(\tau, \xi),~ \tau > 0,\\ \label{ssplit2}
%&( - \tau + \xi^2)^{-1}\left( \xi_e + \sqrt{ -\tau - \xi_{e^{\perp}}^2}\right)^{-1} \widehat{F}(\tau, \xi)  %=  \left(\sqrt{ - \tau - \xi_{e^{\perp}}^2} - \xi_e \right)\hat{u}(\tau, \xi),~\tau \leq 0.
%\end{align}
Now, taking the FT in the variable $ \Xi_e $, we obtain that \eqref{equation-linear} is equivalent to 
\begin{align}\label{pseudo-evo1}
	\big ( i \partial_{x_e} +  D_{t, e^{\perp}}\big ) \tilde{u}(t, x_e, x_{e^{\perp}}) = \tilde{F}(t, x_e, x_{e^{\perp}}), 
\end{align}
%resp. with $  u = u^+ + u^-$ 
%\begin{align}
%\begin{cases} \label{pseudo-evo}
%\big ( i \partial_{x_e} +  D^{+}_{t, x_{e^{\perp}}}\big ) u^+ = F^+(t, x_e, x_{e^{\perp}}), \\[4pt]
%\big ( i \partial_{x_e} + D^{-}_{t, x_{e^{\perp}}}\big )u^- = F^-(t, x_e, x_{e^{\perp}}),
%\end{cases}
%\end{align}
where
\begin{align}\label{supi-op}
	\widehat{D_{t, e^{\perp}} u}(\tau, \xi_e,\xi_{e^{\perp}} ) &= \left(\sqrt{ |\tau|- \xi_{e^{\perp}}^2}\right) \hat{u}(\tau, \xi),\\[3pt]
	%\label{split34}
	%&\widehat{D_{t, x_{e^{\perp}}} u}(\tau, \xi_e,\xi_{e^{\perp}} ) \chi\{ \pm \tau >  0\} = \widehat{D_{t, x_{e^{\perp}}}^{\pm} u}(\tau, \xi_e,\xi_{e^{\perp}} ),\\[5pt]
	\label{split3}
	\mathcal{F}(\tilde{F})(\tau, \xi_e, \xi_{e^{\perp}}) &= -(|\tau| + \xi^2)^{-1}\left( \xi_e + \sqrt{ |\tau| - \xi_{e^{\perp}}^2}\right)^{-1}\widehat{F}(\tau, e \xi_e + \xi_{e^{\perp}})\\[3pt]
	%\label{ssplit3}
	%& \widehat{\tilde{F}}(\tau, \xi_e, \xi_{e^{\perp}})\chi\{ \pm \tau >  0\}  =  \widehat{F^{\pm}}(\tau, \xi_e, \xi_{e^{\perp}}),\\[5pt] 
	%\label{supi-u^+}
	\mathcal{F}(\tilde{u})(\tau, \xi_e, \xi_{e^{\perp}}) &= \hat{u}(\tau, \xi_e e + \xi_{e^{\perp}}),
	\label{supi-u^-}
	%&\widehat{\tilde{u}}(\tau, \xi_e, \xi_{e^{\perp}})\chi\{ \pm \tau > 0\} = \hat{u^{\pm}}(\tau, \xi_e, \xi_{e^{\perp}})
\end{align}
\begin{Rem}The calculations above apply to prove inhomogeneous linear estimates for \eqref{equation-linear} with $ F \in L^1_e L^2_{t, e^{\perp}}$ that are based on on the reduction to \eqref{pseudo-evo1}. However, using the above notation for the sets $B_e$ and $ A_e$, we only need estimates for $F \in  L^1_t L^2_x $ localized on $ B_e \cap A_{\lambda}$. These estimates follow directly from Corollary \ref{Corol-Strichartz} $(a)$ and Lemma \ref{maxfunc} $(a)$ in the Appendix \ref{appendix}.
\end{Rem}
~~\\
We now state the homogeneous estimates which follow from the Appendix \ref{appendix}.
\begin{Lemma}[Linear estimates I]\label{linear-bounds}
	Let $ u_0, u_1 \in L^2(\R^d)$,~ $e \in \mathcal{M}$,~ $\lambda > 0$ dyadic with $  \supp(\hat{u}_0),~ \supp(\hat{u}_1) \subset A_{\lambda}^d \cap A_e  $.
	%$$ \supp(\hat{u}_0)\subset \left \{ \xi_{e^{\perp}} +  \tau  e~|~ \xi \in \R^d,~ \tau \in \R,~ \f{|\xi|}{\sqrt{2}}\leq \xi \cdot e,~ | \tau - \xi^2| \leq \f{(\xi\cdot e)^2}{2},~ |\xi| \approx 2^k \right \}.$$
	Then the solution $u$ of \eqref{equation-linear} with $F=0$ satisfies
	\begin{align}\label{hom}
		\norm{u}{L_{e}^p L_{t, e^{\perp}}^q}  \leq C \lambda^{ \f{d}{2} - \f{1}{p}- \f{(d+1)}{q}} \left( \norm{u_0}{L^2} + \norm{u_1}{\dot{H}^{-2}} \right),
	\end{align}
	where $ (p,q)$ is an admissible pair. Further if $d \geq 3 $ and $ \hat{u}_0,~ \hat{u}_1$ have Fourier support in $A_{\lambda}^d$, then the solution $u$ of \eqref{equation-linear} with $F=0$ satisfies
	\begin{align}\label{hom2}
		\sup_{e \in \mathcal{M}}\norm{u}{L^2_e L^{\infty}_{t, e^{\perp}}} &\leq C \lambda^{\f{d-1}{2} } (\norm{u_0}{L^2_{x}} + \norm{u_1}{\dot{H}^{-2}_x}).\\\label{another-one}
		\norm{u}{L_{t}^p L_{x}^q}  &\leq C \lambda^{ \f{d}{2} - \f{2}{p}- \f{d}{q}} \left( \norm{u_0}{L^2} + \norm{u_1}{\dot{H}^{-2}} \right).
	\end{align}
\end{Lemma}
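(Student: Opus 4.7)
The plan is to reduce everything to known linear bounds for the free Schrödinger group $e^{\pm it\Delta}$ localized at frequency $\lambda$, then cite the lateral Strichartz and maximal function estimates from the Appendix. The starting point is the factorization
\[
S(t) = \tfrac12 Q^{-1} \, \mathrm{diag}(e^{-it\Delta}, e^{it\Delta}) \, Q
\]
recorded in the introduction, which gives
\[
u(t) = \tfrac12\bigl(e^{-it\Delta}+e^{it\Delta}\bigr)u_0 + \tfrac{1}{2i}(-\Delta)^{-1}\bigl(e^{-it\Delta}-e^{it\Delta}\bigr)u_1.
\]
Since $u_1$ is supported in $A_\lambda^d$, the multiplier $(-\Delta)^{-1}$ has operator norm $\sim \lambda^{-2}$, so that $\norm{(-\Delta)^{-1}u_1}{L^2} \sim \norm{u_1}{\dot H^{-2}}$. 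It therefore suffices to prove the corresponding bounds
\[
\norm{e^{\pm it\Delta}f}{L_e^pL_{t,e^\perp}^q} \lesssim \lambda^{\f{d}{2}-\f{1}{p}-\f{d+1}{q}}\norm{f}{L^2}, \qquad \sup_{e \in \mathcal{M}}\norm{e^{\pm it\Delta}f}{L_e^2 L_{t,e^\perp}^\infty} \lesssim \lambda^{\f{d-1}{2}}\norm{f}{L^2}
\]
for $f$ Fourier-supported in $A_\lambda^d\cap A_e$, respectively $A_\lambda^d$.

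For the lateral Strichartz bound \eqref{hom}, the hypothesis $\supp\hat u_0,\supp\hat u_1\subset A_\lambda^d\cap A_e$ is exactly what is needed to ensure that the characteristic set of $e^{\pm it\Delta}$ is graphed over the hyperplane $\{\xi_e=0\}$ in a non-degenerate way, so that the pseudo-evolution reduction $(i\partial_{x_e}+D_{t,e^\perp})$ from \eqref{pseudo-evo1} applies; this yields the dispersive decay in the lateral direction and, via the $TT^*$ argument, Corollary~\ref{Corol-Strichartz}(a) of the Appendix, from which \eqref{hom} follows immediately after combining with Step~1. For the maximal function estimate \eqref{hom2}, no angular restriction on the data is assumed; we instead partition via $\{h_e\}_{e\in\mathcal{M}}$ and use the uniformity of the sharp Kenig--Ruiz-type bound (Lemma~\ref{maxfunc}(a) in the Appendix), valid for $d\geq 3$, to get $\sup_e\norm{e^{\pm it\Delta}f}{L_e^2L_{t,e^\perp}^\infty}\lesssim \lambda^{(d-1)/2}\norm{f}{L^2}$, and the overlap bound $K=K_d$ on the partition guarantees the $\ell^2$-summability needed to keep the supremum on the left-hand side. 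Finally, \eqref{another-one} is just Corollary~\ref{Strichh} applied to the homogeneous problem ($F=0$), once the admissible pair exponents are rewritten through \eqref{gap} as the single scaling factor $\lambda^{d/2-2/p-d/q}$.

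The routine part is the reduction to Schrödinger; the genuine content sits in the Appendix lemmas. The main obstacle is the maximal function estimate \eqref{hom2} with its sharp exponent $\lambda^{(d-1)/2}$: it is the reason dimension $d\geq 3$ appears in the statement (the $d=2$ endpoint fails) and it is the ingredient that, combined with the usual Strichartz $S_\lambda$, will later allow the factorization $L_e^\infty L_{t,e^\perp}^2 \cdot L_e^2 L_{t,e^\perp}^\infty \subset L_{t,x}^2$ used in Section~\ref{sec:multi}.
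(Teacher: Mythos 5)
Your proposal is correct and follows essentially the same route as the paper: split $u(t)=\f12 e^{-it\Delta}(u_0-i(-\Delta)^{-1}u_1)+\f12 e^{it\Delta}(u_0+i(-\Delta)^{-1}u_1)$, use that $(-\Delta)^{-1}$ on $A_\lambda^d$ converts $\norm{u_1}{\dot H^{-2}}$ into an $L^2$ bound, and then quote Corollary~\ref{Corol-Strichartz} for \eqref{hom}, Lemma~\ref{maxfunc} for \eqref{hom2}, and Corollary~\ref{Strichh} for \eqref{another-one}. The only superfluous step is the partition via $\{h_e\}$ for \eqref{hom2}: Lemma~\ref{maxfunc} already applies directly to data supported in all of $A_\lambda^d$ with the supremum over $e\in\mathcal{M}$ on the left, so no angular decomposition is needed there.
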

\begin{proof}
	By \eqref{Duhamel}, we note
	\[ u(t) = \f12e^{- i t \Delta} ( u_0 - i (- \Delta)^{-1} u_1) + \f12e^{ i t \Delta} ( u_0 + i (- \Delta)^{-1} u_1),\]
	hence \eqref{hom} follows from Corollary \ref{Corol-Strichartz} and \eqref{hom2} follows from Lemma \ref{maxfunc}. Estimate \eqref{another-one} is the classical Strichartz estimate for the Schr\"odinger group, for which we refer to Corollary \ref{Strichh}.
\end{proof}

\begin{Lemma}[Linear estimates II]\label{linear-bounds-inhomogeneous} For $ e \in \mathcal{M}$ and $\lambda > 0$ a dyadic number let $ F \in L^1_t L^2_x$ be localized  in $ A_{\lambda} \cap B_e $. Then the solution $u$ of \eqref{equation-linear} with $u_0 = u_1 = 0$ satisfies
	\begin{align}
		\norm{u}{L^p_{e}L^q_{t, e^{\perp}}}  &\lesssim \lambda^{ (d+1) ( \f{1}{2} - \f{1}{q}) - \f{1}{p} - \f52}\norm{F}{ L^1_t L^2_x}, \label{inhom1}\\
		\sup_{ \tilde{e} \in \mathcal{M}} \big( \norm{u}{L^2_{\tilde{e}}L^{\infty}_{t, \tilde{e}^{\perp}}} \big) &\lesssim \lambda^{\f{d}{2} -\f52}\norm{F}{ L^1_t L^2_x},\label{inhom3}
	\end{align}
	where $ (p,q)$ is an admissible pair. If $ \hat{F} $ has support in $ A_{\lambda}$, then the solution $u$ of \eqref{equation-linear} with $u_0 = u_1 = 0$ satisfies
	\begin{align}
		\norm{u}{L^p_{t}L^q_{x}}  \lesssim \lambda^{ d ( \f{1}{2} - \f{1}{q}) - \f{2}{p} - 2}\norm{F}{L^1_t L^2_x}\label{inhom2},
	\end{align}
	where $ (p,q)$ is an admissible pair.
	%\item[(c)] Let $F \in L^1_tL^2_x$ be localized as above in $(a)$ and $(b)$. Then $u$ further satisfies
	%\begin{align}
	%&\norm{u}{L^p_{e}L^q_{t, e^{\perp}}}  \lesssim \lambda^{ (n+1) ( \f{1}{2} - \f{1}{q}) - \f{1}{p} - \f{5}{2}}\norm{F}{L^1_t L^2_x}, \label{inhom1-andere}\\[3pt]
	%	&\norm{u}{L^p_{t}L^q_{x}}  \lesssim \lambda^{ n ( \f{1}{2} - \f{1}{q}) - \f{2}{p} - 2}\norm{F}{ L^1_t L^2_x},\label{inhom2-andere}\\[3pt]
	%	&\sup_{ \tilde{e} \in \mathcal{M}} \big( \norm{u}{L^2_{\tilde{e}}L^{\infty}_{t, \tilde{e}^{\perp}}} \big) \lesssim \lambda^{\f{n}{2} -\f{5}{2}}\norm{F}{L^1_t L^2_x},\label{inhom3-andere}
	%	\end{align}
	%	where $ (p,q)$ are admissible pairs.
\end{Lemma}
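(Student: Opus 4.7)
The strategy is to reduce each of the three inhomogeneous estimates to the corresponding homogeneous estimate from Lemma \ref{linear-bounds}, using Duhamel's formula together with Minkowski's integral inequality. The philosophy is entirely analogous to deducing the classical inhomogeneous Strichartz estimates from the homogeneous Strichartz estimates for the Schr\"odinger group, and the extra factor $\lambda^{-2}$ in every estimate will be a book-keeping consequence of the $(-\Delta)^{-1}$ that appears in Duhamel.

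First I would invoke \eqref{Duhamel} for zero initial data and factor the half-wave operator, obtaining
\begin{align*}
u(t) \;=\; \frac{1}{2i}\int_0^t (-\Delta)^{-1}\bigl(e^{-i(t-s)\Delta}-e^{i(t-s)\Delta}\bigr) F(s)\,ds.
\end{align*}
For fixed $s$, the integrand is a free Schr\"odinger evolution (with initial time $s$) of the datum $g_s:=(-\Delta)^{-1}F(s)$. Since $F$ is space-time Fourier localized in $A_{\lambda}$, the slice $F(s,\cdot)$ is spatially Fourier localized in $A_{\lambda}^d$, so Bernstein's inequality gives
\begin{align*}
\|g_s\|_{L^2_x}\;\lesssim\;\lambda^{-2}\|F(s)\|_{L^2_x}.
\end{align*}
Moreover, if $\widehat{F}$ is additionally supported in $B_e$, then because the $\xi$-projection of $B_e$ lies in $A_e$, the datum $g_s$ is spatially Fourier localized in $A_\lambda^d\cap A_e$, which is exactly the hypothesis needed to apply the lateral and maximal-function homogeneous bounds of Lemma \ref{linear-bounds} to $e^{\pm i(t-s)\Delta}g_s$.

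Next, I would apply Minkowski's integral inequality in each of the three target mixed norms $X\in\{L^p_eL^q_{t,e^\perp},\;L^2_{\tilde e}L^\infty_{t,\tilde e^\perp},\;L^p_tL^q_x\}$, which are genuine norms (noting that the $\sup_{\tilde e}$ in \eqref{inhom3} commutes with the outer integration since \eqref{hom2} is uniform in $\tilde e$). After extending $F$ by zero outside $[0,t]$ to treat the truncated integral cleanly, one obtains
\begin{align*}
\|u\|_X \;\leq\; \int_{\R}\bigl\|e^{\pm i(t-s)\Delta} g_s\bigr\|_X\,ds.
\end{align*}
Then for \eqref{inhom1} I invoke \eqref{hom} with the admissible pair $(p,q)$; for \eqref{inhom3} the maximal-function bound \eqref{hom2}; and for \eqref{inhom2} the classical Strichartz estimate \eqref{another-one}. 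Substituting the Bernstein bound on $g_s$ and integrating in $s$ produces exactly the claimed exponents: the homogeneous exponents from Lemma \ref{linear-bounds} are each reduced by $2$, which matches the target $-5/2$ in \eqref{inhom1}, $d/2-5/2$ in \eqref{inhom3}, and $-2$ in \eqref{inhom2}.

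The main point requiring care, rather than an obstacle per se, is the Fourier-support bookkeeping: one must verify that the $A_e$-localization of $F$ passes (through the $\xi$-projection of $B_e$) to $g_s$ so that the lateral homogeneous estimate \eqref{hom} is genuinely applicable for each fixed $s$. Once this is in place the remainder reduces to scalar integration and exponent matching; no cancellation between the two Schr\"odinger branches $e^{\mp i(t-s)\Delta}$ is needed, since the localization in $B_e$ makes each branch contribute comparably without creating logarithmic losses at the resonant set.
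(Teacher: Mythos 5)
Your argument is correct and is essentially the paper's own proof: the paper likewise writes $u$ via Duhamel, splits $\sin(-(t-s)\Delta)/(-\Delta)$ into the two Schr\"odinger branches $e^{\pm i(t-s)\Delta}(-\Delta)^{-1}$, dominates the truncated integral pointwise, applies Minkowski together with the homogeneous lateral/maximal bounds (Corollary \ref{Corol-Strichartz} and Lemma \ref{maxfunc}, which underlie Lemma \ref{linear-bounds}), and absorbs $(-\Delta)^{-1}$ by Bernstein after checking that the $B_e\cap A_{\lambda}$ localization of $F$ passes to the spatial Fourier support of each slice. The only cosmetic difference is that you rederive \eqref{inhom2} by the same Minkowski scheme, whereas the paper simply cites Corollary \ref{Strichh}; both are valid since the source space is $L^1_tL^2_x$.
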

%\begin{Rem}
%	The proof is an application of Corollary \ref{Corol-Strichartz} and Lemma \ref{maxfunc} which follows the argument in \cite{bejenaru1} closely. 
%The estimates in Lemma \ref{linear-bounds-inhomogeneous} (a) and (b) hold for $ Q_{\leq \mu }u $ instead of $u$ on the left hand side for any dyadic number $ \mu$. 
%\end{Rem}
\begin{proof} 
	The estimate \eqref{inhom2} is the classical Strichartz estimate, which is stated in Corollary \eqref{Strichh}. For the remaining bounds \eqref{inhom1}, \eqref{inhom3}, we decompose the solution 
	\begin{align*}
		u(t) = \int_0^t \f{\sin(-(t-s)\Delta)}{ (-\Delta)} F(s)~ds =& \f{1}{2i}\int_0^t e^{-i(t-s) \Delta} (- \Delta)^{-1}F(s)~ds\\
		&~~+  \f{1}{2i}\int_0^t e^{i(t-s) \Delta} (- \Delta)^{-1}F(s)~ds.
	\end{align*}
	Especially, we have the pointwise bound
	$$  \left | \int_0^t e^{\pm i (t-s) \Delta} ( - \Delta)^{-1}F(s)~ds \right | \leq \int_0^{\infty} |e^{\pm i (t-s) \Delta}  ( - \Delta)^{-1}F(s)|~ds, $$
	and observe \eqref{inhom1} and  \eqref{inhom3} by Corollary \ref{Corol-Strichartz} $(a)$, Lemma \ref{maxfunc} $(a)$. If  $X$ denotes either one of the spaces on the LHS of \eqref{inhom2} and \eqref{inhom3}, we estimate
	\begin{align*}
		\norm{\int_{- \infty}^{t} e^{\pm i(t-s) \Delta} (- \Delta)^{-1}F(s) ~ds}{X} &\leq \int_{- \infty}^{\infty} \big \|e^{-
			\pm i(t-s) \Delta} (- \Delta)^{-1}F(s)\big \|_{X} ~ds\\
		&\lesssim \int_{- \infty}^{\infty}  \|(- \Delta)^{-1}F(s) \|_{L^2_{x}} ~ds.
	\end{align*} Here we note that in order to use the Corollary and the Lemma, we verify that   $ e^{\mp i s \Delta} ( - \Delta)^{-1}F(s) $ has Fourier support (in $ \xi$) in $(A^d_{\lambda} \cup A^d_{\lambda/2}) \cap A_e$ for all $ s \in \R $. This follows since $F$ is localized on $B_e \cap A_{\lambda} $ and hence also implies for normalized frequencies $ \lambda \sim 1 $
	$$ 	\norm{( - \Delta)^{-1}F}{L^1_t L^2_x} \lesssim \norm{F}{L^1_t L^2_x}. $$
	\begin{comment}
	and the $T^*$ bound of the classical Strichartz estimate
	$$ \big\| \int_0^{\infty} e^{\mp i s \Delta} F^{\pm}(s) ds \big\|_{L^2_{x}} \lesssim \norm{F^{\pm}}{L^1_t L^2_x},$$
	(which reduces to the energy bound for this case). Here we note that in order to use Corollary \ref{Corol-Strichartz} we verify that   $u^{\pm}(t) $ has Fourier support in $(A_{\lambda} \cup A_{\lambda/2}) \cap A_e$ which follows since $F$ is localized on $B_e \cap A_{\lambda} $.	Also, we use (as above) for normalized frequencies $ \lambda \sim 1 $
	$$ 	\norm{F^{\pm}}{L^1_t L^2_x} \lesssim \norm{F}{L^1_t L^2_x}, $$
	which follows from Lemma (multiplier ref?). In order to finish the argument, we have to follow the argument of Christ-Kiselev, which was done for $ L^{\f{2(n+2)}{n+4}}_{t,x} $ on the RHS by Bejenaru in \cite{bejenaru1}, appendix. Our case follows similarly.
	\end{comment}
\end{proof}
The next lemma follows from the homogeneous estimates in Lemma \ref{linear-bounds}, resp. Corollary \ref{Corol-Strichartz} and Lemma \ref{maxfunc}.
\begin{Lemma}[Trace estimate] \label{trans-lemma} Let $ F \in X^{\f12, 1}_{\lambda} $for a dyadic number $ \lambda$.
	\begin{itemize}
		\item[(a)] There holds
		\begin{align}\label{trans1}
			\sup_{e \in \mathcal{M}} \big( \norm{F}{L^2_e L^{\infty}_{t e^{\perp}} } \big)  \lesssim \lambda^{ \f{{d-1}}{2}} \norm{F}{X^{\f12, 1}_{\lambda}}\\
			\norm{F}{L^p_t L^{q}_{x }}  \lesssim \lambda^{\f{d}{2}- \f{d}{q}- \f{2}{p}} \norm{F}{X^{\f12, 1}_{\lambda}},\label{trans2}
		\end{align}
		for any admissible pair $(p, q)$.
		\item[(b)] We additionally assume $ \hat{F}(\tau, \cdot) $ has support in $ A_e $ for some $ e \in \mathcal{M}$ and all $ \tau \in \R$. Then there holds
		\begin{align}
			\norm{F}{L^p_e L^{q}_{t, e^{\perp}} }  \lesssim  \lambda^{\f{d}{2}- \f{d+1}{q} - \f{1}{p}} \norm{F}{X^{\f12, 1}_{\lambda}},\label{trans3}
		\end{align}
		where $ (p,q)$ is an admissible pair,~ $ p \geq 2$.	
	\end{itemize}
\end{Lemma}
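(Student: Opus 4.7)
The plan is to combine the atomic decomposition of the $X^{1/2,1}_\lambda$-norm, as set out in \eqref{decom}--\eqref{norm-decomposition}, with the homogeneous linear bounds of Lemma \ref{linear-bounds}. Write
\begin{equation*}
F = \sum_{\mu \lesssim \lambda^2} h_\mu + h, \qquad h_\mu(t,x) = \int_{|s|\sim \mu} |s|^{-1/2}\, e^{its}\, h^\mu(s;t,x)\, ds,
\end{equation*}
where each $h^\mu(s;\cdot,\cdot)$ is a free solution of $L$ at spatial frequency $\lambda$ (depending on the parameter $s$) and $h$ solves $Lh=0$ with the initial data of $F$ at the limiting block $\mu\searrow 0$. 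Then \eqref{norm-decomposition} yields
\begin{equation*}
\|F\|_{X^{1/2,1}_\lambda} \sim \sum_{\mu\lesssim \lambda^2} \left( \int \|h^\mu(s;0,\cdot)\|_{L^2_x}^2\, ds \right)^{1/2} + \|h\|_{L^\infty_t L^2_x} + \|\partial_t h\|_{L^\infty_t \dot{H}^{-2}_x}.
\end{equation*}

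For part (a), fix an admissible pair $(p,q)$ and let $Y$ denote either $L^2_e L^\infty_{t,e^\perp}$ (uniformly in $e\in\mathcal{M}$) or $L^p_t L^q_x$. Minkowski's inequality for the Banach-valued integral defining $h_\mu$ gives
\begin{equation*}
\|h_\mu\|_Y \leq \int_{|s|\sim\mu} |s|^{-1/2}\, \|h^\mu(s;\cdot,\cdot)\|_Y\, ds,
\end{equation*}
and since each $h^\mu(s;\cdot,\cdot)$ is a free $L$-solution at spatial frequency $\lambda$, the homogeneous estimates \eqref{hom2} and \eqref{another-one} of Lemma \ref{linear-bounds} control $\|h^\mu(s;\cdot,\cdot)\|_Y$ by $\lambda^\alpha \|h^\mu(s;0,\cdot)\|_{L^2_x}$ with the required scaling exponent $\alpha$ (namely $\alpha = \frac{d-1}{2}$ for the maximal bound and $\alpha = \frac{d}{2}-\frac{d}{q}-\frac{2}{p}$ for the Strichartz bound); the $\dot{H}^{-2}$ contribution of $\partial_t h^\mu$ is absorbed by Bernstein at spatial frequency $\lambda$. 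Cauchy--Schwarz in $s$ over the dyadic window $|s|\sim\mu$ converts the weight $|s|^{-1/2}$ into a factor $\mu^{-1/2}\cdot \mu^{1/2} = 1$, giving
\begin{equation*}
\|h_\mu\|_Y \lesssim \lambda^\alpha \left( \int \|h^\mu(s;0,\cdot)\|_{L^2_x}^2\, ds \right)^{1/2}.
\end{equation*}
Summing in dyadic $\mu$ reproduces the $X^{1/2,1}_\lambda$-norm, while the purely free piece $h$ is estimated directly by Lemma \ref{linear-bounds}.

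For part (b), the assumption that $\widehat{F}(\tau,\cdot)$ is supported in $A_e$ for every $\tau$ is preserved by the atomic decomposition, since the latter is defined via Fourier multipliers in $(\tau,\xi)$ and hence commutes with the angular multiplier $P_e(\nabla)$. Each atom $h^\mu(s;\cdot,\cdot)$ therefore inherits angular support in $A_e$, so the lateral Strichartz bound \eqref{hom} of Lemma \ref{linear-bounds} applies and produces \eqref{trans3} by the same Minkowski--Cauchy--Schwarz scheme, with exponent $\lambda^{d/2-1/p-(d+1)/q}$; the restriction $p\geq 2$ is inherited from the admissibility range available for \eqref{hom}. The main technical point to verify is precisely that the foliation construction of $h^\mu(s;\cdot,\cdot)$ can be set up consistently with both the frequency localization at $\lambda$ and the angular localization in $A_e$, which follows by performing the foliation on the Fourier side as discussed before \eqref{norm-decomposition}. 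Apart from this bookkeeping, the proof is essentially a transfer argument: the numerology $b=\tfrac12$, $p=1$ of the $X$-norm has been chosen so that the weight $|s|^{-1/2}$ and Cauchy--Schwarz over $|s|\sim\mu$ exactly produce an $\ell^1$-summation in the modulation.
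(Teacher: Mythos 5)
Your overall scheme (foliation into free-solution atoms, Minkowski plus Cauchy--Schwarz over $|s|\sim\mu$, transfer of the homogeneous bounds, separate treatment of the limiting block $Lh=0$) is exactly the paper's, and it is correct for the standard Strichartz bound \eqref{trans2}, where one only needs $|\xi|\lesssim\lambda$ and Bernstein. However, for \eqref{trans1} and \eqref{trans3} there is a genuine gap in the step where you claim that ``each $h^\mu(s;\cdot,\cdot)$ is a free $L$-solution at spatial frequency $\lambda$.'' The localization of $F$ is in the set $A_\lambda=\{(\tau,\xi):(\tau^2+|\xi|^4)^{1/4}\sim\lambda\}$, not in the spatial annulus $A^d_\lambda$. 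For atoms at high modulation $\mu\sim\lambda^2$ (i.e.\ $|s|\sim\lambda^2$), the support of $\xi\mapsto\hat F(s\pm\xi^2,\xi)$ can fill the whole ball $\{|\xi|\lesssim\lambda\}$ — take $\tau\sim\lambda^2$ and $\xi$ near $0$. The lateral bounds \eqref{hom} and \eqref{hom2} of Lemma \ref{linear-bounds} (equivalently Corollary \ref{Corol-Strichartz} and Lemma \ref{maxfunc}) are only available for data with Fourier support in the annulus $A^d_\lambda$ (and in $A_e$ for \eqref{hom}), because the underlying change of variables $\xi_e=\sqrt{\pm\tau-\xi_{e^\perp}^2}$ produces a Jacobian that is controlled only when $\xi_e\sim\lambda$. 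So your transfer argument does not apply to these atoms, and a naive dyadic resummation over the spatial frequencies of the atom fails in general (the exponent $\f{d}{2}-\f1p-\f{d+1}{q}$ vanishes for some admissible pairs).

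The paper closes this gap with an additional decomposition $F=Q_{\leq\lambda^2/16}F+(1-Q_{\leq\lambda^2/16})F$. On the low-modulation piece the constraint $||\tau|-\xi^2|\ll\lambda^2$ together with $(\tau^2+|\xi|^4)^{1/4}\sim\lambda$ forces $|\xi|\sim\lambda$, so the atoms do lie in $A^d_{\lambda/2}\cup A^d_\lambda$ and your argument goes through verbatim. The high-modulation piece is not treated by transfer at all: there one uses the crude bound
\begin{align*}
\|(1-Q_{\leq\f{\lambda^2}{16}})F\|_{X}\lesssim\lambda\,\|(1-Q_{\leq\f{\lambda^2}{16}})F\|_{L^2_{t,x}},
\end{align*}
which follows from Sobolev embedding and is exactly compatible with the $X^{-\f12}$-weight since $\mu^{1/2}\sim\lambda$ there. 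This is also the true source of the restriction $p\geq2$ in part (b) — not the admissibility range of \eqref{hom}, as you assert. You should add this modulation splitting and the Sobolev-embedding estimate for the high-modulation part to make the proof complete.
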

\begin{Rem}In the following, we often use the dual estimates of \eqref{trans2} - \eqref{trans3}, i.e.
	\begin{align*}
		& \norm{F}{X^{-\f12, \infty}_{\lambda}} \lesssim \lambda^{\f{d}{2}- \f{d}{q}- \f{2}{p}}\norm{F}{L^{p'}_t L^{q'}_{x }},\\
		&\norm{F}{X^{-\f12, \infty}_{\lambda}} \lesssim \lambda^{\f{d}{2}- \f{d+1}{q} - \f{1}{p}} \norm{F}{L^{p'}_e L^{q'}_{t, e^{\perp} }},
	\end{align*}
\end{Rem}
\begin{proof}[Proof of Lemma \ref{trans-lemma}]
	For $ F \in X^{\f12, 1}_{\lambda}$, we have the representation
	$$F = \sum_{ \mu \leq 4 \lambda^2} Q_{\mu}F + h,$$
	where $ Lh = 0$ as mentioned in the previous section. We want to use \eqref{decomposition} and \eqref{norm-decomposition}. However, here we split over $ sign(\tau)$ and  write
	\begin{align*}
		\sum_{\mu \in 2^{\Z}} Q_{\mu} F &= \sum_{\mu \in 2^{\Z}} \int \int e^{ix \cdot \xi + it \tau} \varphi(w(\tau, \xi)\slash \mu) \hat{F}(\tau, \xi)~d\tau~d\xi\\
		&= \sum_{\mu \in 2^{\Z}} \int \int \chi(s + \xi^2 > 0) e^{ix \cdot \xi + it( s + \xi^2)} \varphi(w(s + \xi^2, \xi)\slash \mu) \hat{F}(s + \xi^2, \xi)~ds~d\xi\\
		&~~ + \sum_{\mu \in 2^{\Z}} \int \int \chi(-s + \xi^2 > 0) e^{ix \cdot \xi + it( s - \xi^2)} \varphi(w( s- \xi^2 , \xi)\slash \mu) \hat{F}(s - \xi^2, \xi)~ds~d\xi\\
		&= \sum_{\mu \in 2^{\Z}} \int   e^{ it( s - \Delta)} h^+_{\mu}(s)~ds + \sum_{\mu \in 2^{\Z}} \int   e^{ it( s + \Delta)} h^-_{\mu}(s)~ds,
	\end{align*}
	where
	\begin{align*}
		h^{\pm}_{\mu}(s) = \chi\big\{ \mu / 2^{\f{3}{2}}\leq |s| \leq 2 \mu \big\}  \int e^{i x \cdot \xi} \chi(\pm s + \xi^2 > 0) \varphi(w( s \pm \xi^2, \xi)\slash \mu) \hat{F}(s \pm \xi^2, \xi)~d \xi,
	\end{align*}
	and we used
	\begin{align*}
		\mu/2 \leq w(\tau , \xi) = ||\tau| - \xi^2| \frac{|\tau| + \xi^2}{(\tau^2 + |\xi|^4)^{\f12}} \leq \sqrt{2}||\tau| - \xi^2| \leq \sqrt{2}w(\tau , \xi) \leq 2 \sqrt{2} \mu.
	\end{align*}
	Now we assume there holds $ \norm{e^{ i\theta}e^{\pm it\Delta} f}{X} \lesssim \norm{f}{L^2_x}$ for some space $X$ and all $ \theta, t \in \R $ , then 
	\begin{align*}
		\sum_{\mu } \sum_{\pm} \norm{\int   e^{ it( s \mp \Delta)} h^{\pm}_{\mu}(s)~ds}{X} &\lesssim \sum_{\mu } \sum_{\pm} \mu^{\f12} \left(\int \norm{h^{\pm}_{\mu}(s)}{L^2_x}^2~ds \right)^{\f12}\\
		& \sim  \sum_{\mu } \sum_{\pm} \mu^{\f12} \norm{\chi(\pm \tau > 0) \varphi(w(\pm \tau, \xi)\slash \mu) \hat{F}(\tau, \xi)}{L^2_{\xi,\tau}}\\
		&\lesssim \sum_{\mu } \mu^{\f12} \norm{Q_{\mu}F}{L^2_{t, x}}.
	\end{align*}
	Hence \eqref{trans2} follows from the Strichartz estimate for Schr\"odinger groups and Lemma \ref{maxfunc}, since (for the limiting dyadic block $ \mu = 0$ with $Lh = 0$) we have (see Lemma \ref{Strichh}, resp. Lemma \ref{linear-bounds})
	\begin{align}
		\norm{h}{X} \lesssim \norm{h(0)}{L^2} + \norm{\partial_th(0)}{\dot{H}^{-2}},
	\end{align}
	where $ X = \lambda^{\f{d}{2} - \f{2}{p} - \f{d}{q}} L^p_tL^q_x $. For \eqref{trans1}, \eqref{trans3}, we use the decomposition
	$$ F = Q_{\leq \f{\lambda^2}{16}}F + (1 -   Q_{\leq \f{\lambda^2}{16}})F.$$
	Then we check that calculating $h^{\pm}_{\mu}(s)$ in the above argument for $ Q_{\leq \f{\lambda^2}{16}}F $, the function 
	$$  \xi \mapsto \widehat{h^{\pm}_{\mu}(s)}(\xi) $$ has support in $A^d_{\lambda/2} \cup A^d_{\lambda}$ for all $ s \in \R $. Hence, following the argument with 
	$$ X = \lambda^{\f{d}{2} - \f{1}{p} - \f{(d+1)}{q}}L^p_eL^q_{t, e^{\perp}},~~~ X = \bigcap_{e} \lambda^{\f{d-1}{2}} L^2_e L^{\infty}_{t, e^{\perp}},$$
	we obtain \eqref{trans1}, \eqref{trans3} by Corollary \ref{Corol-Strichartz} and Lemma \ref{maxfunc} for $ Q_{\leq \f{\lambda^2}{16}}F $ on the LHS. For \eqref{trans3}, we further note that by assumption $h^{\pm}_{\mu}(s) $ localizes in $A_e$ for all $s \in \R $. The remaining estimates  for $ (1 -   Q_{\leq \f{\lambda^2}{16}})F $ are equivalent to
	\begin{align*}
		\| (1 -   Q_{\leq \f{\lambda^2}{16}})F\|_{X} \lesssim \lambda \|(1 -   Q_{\leq \f{\lambda^2}{16}})F\|_{L^2_{t, x}},
	\end{align*}
	which follow from Sobelev embedding (thus the restriction to $ p \geq 2 $). As above, we obtain the estimates for the $ L h = 0$ part of the limiting dyadic block $ \mu = 0$ by Lemma \ref{linear-bounds}.
\end{proof}

	\subsection{Function spaces}\label{subsec:function-spaces}
	We now define the dyadic building blocks of the function spaces $ Z^{\f{d}{2}},~ W^{\f{d}{2}}$ and use the convention 
	$$ \norm{\cdot}{\lambda B_{\lambda}} =  \lambda^{-1} \norm{\cdot}{B_{\lambda}}.$$
	We set
	\begin{align} Z_{\lambda} =   X_{\lambda} ^{\f{1}{2}, 1} +  Y_{\lambda},
	\end{align}
	where $ Y_{\lambda} $ is the closure of 
	\begin{align*} &\{ f \in \mathcal{S}~|~ \supp(\hat{f})\subset A_{\lambda},~ \norm{f}{Y_{\lambda}} < \infty~\},\\[5pt]
		&\norm{f}{Y_{\lambda}} = \lambda^{- 2} \norm{Lf}{L_t^1 L^2_x} + \norm{f}{L^{\infty}_t L^2_x},
	\end{align*} 
	and the norm of $Z_{\lambda}$ is given by
	\[ \norm{u}{Z_{\lambda}} = \inf_{ u_1 + u_2 = u}\big( \norm{u_1}{X^{\f12, 1}_{\lambda}} +  \norm{u_2}{Y_{\lambda}}\big). \]
	For the nonlinearity, we construct $ W_{\lambda} = L (Z_{\lambda}) $, i.e.
	\begin{align}
		W_{\lambda} = \lambda^2 \big( X_{\lambda}^{ - \f{1}{2}, 1} +  (L^1_t L^2_x)_{\lambda} \big)  
	\end{align}
	where $ (L^1_t L^2_x)_{\lambda} $ is the closure of 
	\begin{align*}   
		&\{ F \in \mathcal{S}~|~ \supp(\hat{F})\subset A_{\lambda},~ \norm{F}{L^1_t L^2_x}  < \infty~\},
	\end{align*}
	and 
	\[ \norm{F}{W_{\lambda}} = \lambda^{-2}\inf_{ F_1 + F_2 = F}\big( \norm{F_1}{X^{-\f12, 1}_{\lambda}} + \norm{F_2}{L^1_t L^2_x}\big). \]
	\begin{comment}
	For technical reasons, we also need the following dual version of $ W_{\lambda}$
	\begin{align} G_{\lambda} =   X_{\lambda} ^{\f{1}{2}, \infty} \cap \big( \sum_{e \in \mathcal{M}} \lambda^{- \f12} (L^{\infty}_e L^2_{t, e^{\perp}})_{\lambda} \big).
	\end{align}
	The norm is given by
	\[ \norm{u}{Z_{\lambda}} =  \norm{u}{X^{\f12, \infty }_{\lambda}} + \inf \big\{ \sum_{ e \in \mathcal{M}} \lambda^{\f12}\norm{u^e}{L^{\infty}_e L^2_{t, e^{\perp}}}~|~ \sum_{e \in \mathcal{M}} u^e = u \big\}. \]
	\end{comment}
	Then, we define
	\begin{align}
		&\norm{u}{Z} = \sum_{\lambda \in 2^{\Z}} \lambda^{ \f{d}{2} }  \norm{P_{\lambda}(D) u}{Z_{\lambda}},\\[2pt]
		&\norm{F}{W} = \sum_{\lambda \in 2^{\Z}} \lambda^{ \f{d}{2} } \norm{P_{\lambda}(D) F}{W_{\lambda}},
	\end{align}
	and 
	\begin{align}
		&\norm{u}{Z^s}^2 = \sum_{ \lambda \in 2^{\Z}} \lambda^{2s} \norm{P_{\lambda}(D) u}{Z_{\lambda}}^2~~\text{for}~ s > \f{d}{2}.\\[2pt]
		&\norm{F}{W^s}^2 = \sum_{ \lambda \in 2^{\Z}} \lambda^{2s} \norm{P_{\lambda}(D) F}{W_{\lambda}}^2~~\text{for}~ s > \f{d}{2}.
	\end{align}
	\subsubsection{Embeddings, linear estimates and continuous operator}\label{subsec:embeddings}
	In this section, we provide some useful embeddings and multiplier theorems concerning $ Z_{\lambda}$ and $ W_{\lambda}$. We also show that the solution of \eqref{equation-linear} satisfies $ u \in Z^s$ if $ Lu \in W^s $ with the correct initial regularity $ \dot{B}^{2,1}_{\f{d}{2}} \times \dot{B}^{2,1}_{\f{d}{2}-2}$ or $\dot{H}^s \times \dot{H}^{s-2}$, respectively for $ s > \f{d}{2}$.\\[4pt]
	At the end of this section, we  show that $ Z_{\lambda}$ bounds 
	$$\lambda^{-\f12}L^{\infty}_eL^2_{t, e^{\perp}},~~ \bigcap_e \lambda^{\f{d-1}{2}}L^2_e L^{\infty}_{t, e^{\perp}}, $$
	in a suitable sense. Therefore, we apply the following heurstic argument, used similarly in \cite{bejenaru1} for Schr\"odinger maps. For solving \eqref{equation-linear} by $ u =V(F)$ with $ u_0 = u_1 = 0$, we rely on the inhomogeneous Strichartz estimate Lemma \ref{linear-bounds-inhomogeneous}
	\begin{align*}
		\norm{V(F)}{\lambda^{-\f12}L^{\infty}_eL^2_{t, e^{\perp}}} \lesssim \lambda^{-2}\norm{F}{L^1_t L^2_x},~~~(\tau,  \xi) \in \supp( \widehat{P_0F})
	\end{align*}
	and otherwise on \emph{inverting the symbol} of $L$
	\begin{align*}
		V(F) = \mathcal{F}^{-1}\big( \frac{ \widehat{F}(\tau, \xi)}{\tau^2 - |\xi|^4}\big),~~ (\tau,  \xi) \in \supp( \widehat{(1- P_0)F}).
	\end{align*}
	
	We first consider the following Lemma, which clearifies how the the spaces $Z_{\lambda}, W_{\lambda} $ behave under modulation cut-off and is essentially from \cite{tataru1} (adapted to the paraboloid $ \tau^2 = |\xi|^4$).
	\begin{Lemma}\label{multiplierr}
		The following operator are continuous for $ 1 \leq p < \infty$ with norms that are uniformly bounded in $ \mu \leq 4\lambda^2$.
		\begin{align*}
			&(a)~~P_{\lambda, \leq \mu},~P_{\lambda}P_0  :~ L^p_t L^2_x \to L^p_t L^2_x,~~ \mu \leq 4 \lambda^2\\[4pt]
			&(b)~~(1- Q_{\leq \mu})P_{\lambda} :~ Y_{\lambda} \to \mu^{-1} L^1_t L^2_x,~~\mu \leq 4 \lambda^2.
		\end{align*}
		
	\end{Lemma}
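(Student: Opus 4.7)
My plan is to reduce both assertions to one-dimensional Fourier multiplier estimates in $t$, via Plancherel in $x$. If a spacetime multiplier $m(\tau,\xi)$ has $\xi$-frozen kernel $K_\xi = \mathcal{F}^{-1}_\tau m(\cdot,\xi)$, then combining the vector-valued Minkowski inequality (applied to the $L^2_x = L^2_\xi$ fiber) with the scalar Young inequality in $t$ yields
\begin{equation*}
\big\|\mathcal{F}^{-1}(m \hat{f})\big\|_{L^p_t L^2_x} \lesssim \Big\|\sup_{\xi} |K_\xi(\cdot)|\Big\|_{L^1_t}\,\|f\|_{L^p_t L^2_x},\qquad 1\le p\le\infty.
\end{equation*}
So both (a) and (b) reduce to producing a pointwise bound on $|K_\xi(t)|$ which is uniformly integrable in $t$ and yields the claimed power of $\mu$.

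For part (a), the symbol of $P_{\lambda,\leq\mu}$ is, for each fixed $\xi$, a smooth bump supported in one or two intervals of width $\sim\mu$ (near $\tau=\pm|\xi|^2$) with $|\partial_\tau^k m_\xi|\lesssim\mu^{-k}$. Integration by parts $N$ times gives the standard estimate $|K_\xi(t)|\lesssim \mu(1+\mu|t|)^{-N}$ uniformly in $\xi$, whence $\|\sup_\xi|K_\xi|\|_{L^1_t}\lesssim 1$. The operator $P_\lambda P_0$ is handled identically, with $\mu$ replaced by the comparable scale $\lambda^2$ on the support of $P_\lambda$.

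For part (b), write $f=h+g$ where $h$ solves $Lh=0$ with $(h(0),\partial_t h(0))=(f(0),\partial_t f(0))$, and $g=f-h$ satisfies $Lg=Lf$ with zero Cauchy data. Since $\hat h$ is supported on the characteristic $\{\tau^2=|\xi|^4\}$, where $w=0$, the cut-off $(1-Q_{\leq\mu})$ annihilates $h$. For the remaining piece, $(1-Q_{\leq\mu})P_\lambda g$ has Fourier transform $m_{\lambda,\mu}(\tau,\xi)(|\xi|^4-\tau^2)^{-1}\widehat{Lf}(\tau,\xi)$; on the support of $m_{\lambda,\mu}$ one has $|\tau^2-|\xi|^4|\sim w\cdot(\tau^2+|\xi|^4)^{1/2}\gtrsim\mu\lambda^2$. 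I further dyadically decompose the multiplier in $\nu=||\tau|-|\xi|^2|\in[\mu,\lambda^2]$: at scale $\nu$ the piece is a smooth bump of width $\sim\nu$ and amplitude $\sim(\nu\lambda^2)^{-1}$, and the argument of paragraph~1 yields the one-dimensional kernel bound $\|\sup_\xi|K_\xi|\|_{L^1_t}\lesssim(\nu\lambda^2)^{-1}$. Summing the geometric-type series $\sum_{\nu\ge\mu}\nu^{-1}\sim\mu^{-1}$ and combining with $\|Lf\|_{L^1_tL^2_x}\le\lambda^2\|f\|_{Y_\lambda}$ gives the claimed bound $\|(1-Q_{\leq\mu})P_\lambda f\|_{L^1_tL^2_x}\lesssim\mu^{-1}\|f\|_{Y_\lambda}$.

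The main subtlety is ensuring that the pointwise kernel bound $\sup_\xi|K_\xi(t)|\lesssim\mu(1+\mu|t|)^{-N}$ is genuinely uniform across the full $\xi$-range, in particular near the transition $|\xi|^2\sim\lambda^2$ where both branches $\tau=\pm|\xi|^2$ are simultaneously active. Here one may split the symbol according to $\operatorname{sign}(\tau)$, treating the two branches separately, or use the fact that the two bumps are disjoint in $\tau$ for fixed $\xi$ and simply add their contributions. The dyadic summation in (b) is benign: the series $\sum_\nu \nu^{-1}$ is dominated by its smallest term $\nu=\mu$, so no logarithmic loss arises, which is exactly why the final factor $\mu^{-1}$ (and not $\mu^{-1}\log(\lambda^2/\mu)$) appears in the statement.
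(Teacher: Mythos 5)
Your argument is correct. For part (a) it is essentially the paper's proof: the paper invokes the same reduction (Plancherel in $x$, Young's inequality in $t$, integration by parts in $\tau$ applied to a symbol with support of measure $O(\mu)$ in $\tau$ and $|\partial_\tau^N m_{\mu,\lambda}|\lesssim_N \mu^{-N}$), and your remark about treating the two branches $\tau\approx\pm|\xi|^2$ separately addresses the only delicate point, namely the kink of $|\tau|$ at $\tau=0$. For part (b) you take a genuinely different route at the level of the kernel estimate. The paper keeps the multiplier $\tilde m_{\mu,\lambda}\sim \mu\lambda^2\,|\tau^2-|\xi|^4|^{-1}$ (cut off to $w(\tau,\xi)\gtrsim\mu$ and to frequency $\lambda$) in one piece, proves the symbol bounds $||\tau|-\xi^2|\,\tilde m_{\mu,\lambda}+||\tau|-\xi^2|^3\,\partial_\tau^2\tilde m_{\mu,\lambda}\lesssim\mu$, deduces the pointwise kernel bounds $|K(t,\xi)|\lesssim\mu\log(\lambda^2/\mu)$ and $|t^2K(t,\xi)|\lesssim\mu^{-1}$, and splits the $t$-integration into three regions, the middle one contributing $\int \mu(1-\log(\mu|t|))\,dt\lesssim 1$; the logarithm thus appears in the pointwise bound but integrates away. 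Your dyadic decomposition in $\nu=||\tau|-\xi^2|$ replaces this by a clean scale-by-scale bound $\sup_\xi\|K^\nu(\cdot,\xi)\|_{L^1_t}\lesssim(\nu\lambda^2)^{-1}$ and a geometric sum over dyadic $\nu\in[\mu,\lambda^2]$ dominated by its smallest term $\nu=\mu$; this avoids the logarithmic bookkeeping and only uses first-order information at each scale, at the price of an extra decomposition. Both routes give the operator norm $\lesssim(\mu\lambda^2)^{-1}$ for $Lf\mapsto(1-Q_{\leq\mu})P_\lambda f$ on $L^1_tL^2_x$, hence the claim after using $\|Lf\|_{L^1_tL^2_x}\leq\lambda^2\|f\|_{Y_\lambda}$. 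Your explicit splitting $f=h+g$ to remove the homogeneous part is also consistent with the paper, which handles this implicitly through the fact that $(1-Q_{\leq\mu})$ annihilates distributions supported on the characteristic surface.
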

	\begin{proof}
		We first recall the following version of Tataru's argument in \cite{tataru1}, stated in \cite[chapter 2.4, Lemma 2.8]{geba-grillakis}.\\[4pt]
		Let  $ C > 0 $ and $ M = \mathcal{F}^{-1}( m ( \tau, \xi)\mathcal{F}(\cdot)) $ be a Fourier multiplier such that the following holds.
		\begin{itemize}
			\setlength\itemsep{0.5em}
			\item[(i)] For any $ \xi$, there holds  $  \supp( \tau \mapsto m(\tau, \xi) ) \subset A_{\xi}$, where $ A_{\xi} $ has  measure $ \leq C$.
			\item[(ii)] For $ N \geq 2 $ there exists $ C_N > 0 $ such that 
			$$ \norm{m}{L^{\infty}_{\tau, \xi}} + C^N \norm{\partial_{\tau}^N m(\tau, \xi)}{L^{\infty}_{\tau, \xi}} \leq C_N. $$
		\end{itemize}
		Then the operator
		\begin{align}
			M : L^p_t L^2_x \to L^p_t L^2_x,~~ 1 \leq p \leq \infty,
		\end{align}
		is continuous and $ \norm{M}{} \lesssim C_N$.\\[4pt]
		By Plancherel (in $ \xi$) and Young's inequality (in $t$), it suffices to proof $ K \in L^1_t L^2_{\xi}$, where
		$$ K(t, \xi) = \int e^{i t \tau} m(\tau, \xi)~d\tau. $$
		However by $(i),~(ii)$ it follows $ \norm{K}{L^1_tL^2_{\xi}} \lesssim C_N C $ and by $(ii)$ and integration by parts 
		$$ |K(t, \xi)| = \left| \f{(-1)^N}{|t|^N i^N} \int e^{i t \tau } \partial_{\tau}^N m(\tau, \xi ) ~d\tau \right| \lesssim \f{C_N}{C^N|t|^N }, $$
		by which
		$$ | K(t, \xi) | \lesssim \frac{C_N C}{(1 + C |t|)^N}.$$
		This argument applies, similar to \cite{tataru1}, to $ P_{\lambda, \leq \mu}$ with multiplier
		$$m_{\mu, \lambda}(\tau, \xi) = \sum_{\tilde{\mu} \leq \mu}\varphi(( \tau^2 + \xi^4)^{\f14}/\lambda) \varphi(w(\tau, \xi)/\tilde{\mu}), $$
		since there holds for $ N \in \N$  and $ \xi \in \R^d $ fixed
		\begin{align}\label{verify}
			&|\partial_{\tau}^N m_{\mu, \lambda}(\tau, \xi)| \lesssim_N \mu^{-N},~~ \supp( m_{\mu, \lambda})  \subset \{ (\tau, \xi)~|~ ||\tau| - \xi^2| \leq 4 \mu \}.
		\end{align}
		For the second operator 
		$$P_{\lambda}P_0 u = \mathcal{F}^{-1}( \varphi(( \tau^2 + \xi^4)^{\f14}/\lambda)) \chi(\tau, \xi)\hat{u}(\tau, \xi))$$
		in $(a)$, we note that $ \chi $ is invariant under scaling and hence the claim reduces to continuity of $ P_1 P_0 : L^p_t L^2_x \to L^p_t L^2_x $. This follows directly from the above argument.\\[3pt]
		Now for part $(b)$, we write
		\begin{align*}
			\mathcal{F} (1 - Q_{\leq \mu})P_{\lambda} u)(\tau, \xi) =&~ \big( 1 - \sum_{\tilde{\mu} \leq \mu}\varphi(w(\tau, \xi)/\tilde{\mu})\big) \varphi(( \tau^2 + \xi^4)^{\f14}/\lambda) \hat{u}(\tau, \xi)\\
			=&~  \mu^{-1} \lambda^{-2}\big( 1 - \sum_{\tilde{\mu} \leq \mu}\varphi(w(\tau, \xi)/\tilde{\mu})\big) \frac{\varphi(( \tau^2 + \xi^4)^{\f14}/\lambda) \mu \lambda^2}{w(\tau, \xi)( \tau^2 + \xi^4)^{\f12}} \widehat{Lu}(\tau, \xi)\\
			=:&~ \mu^{-1} \lambda^{-2} \tilde{m}_{\mu, \lambda}(\tau, \xi) \widehat{Lu}(\tau, \xi).
		\end{align*}
		It hence suffices to prove continuity of the operator $ \mathcal{F}^{-1}(\tilde{m}_{\mu, \lambda} \mathcal{F} (\cdot)) $ on $ L^1_t L^2_x $. This follows similarly as in the proof for the cone in \cite{tataru1}. We sketch the argument following the proof in  \cite[chapter 2.4]{geba-grillakis}.
		There holds
		\begin{align}\label{yo}
			||\tau| - \xi^2 | \tilde{m}_{\mu, \lambda}(\tau, \xi) + ||\tau| - \xi^2|^3 \partial_{\tau}^2\tilde{m}_{\mu, \lambda}(\tau, \xi) \lesssim \mu.
		\end{align} 
		Hence, considering the support 
		$$ \{ (\tau, \xi)~|~ ||\tau| - \xi^2| \geq \mu/\sqrt{2}, ~~~ |\tau| + \xi^2 \leq 4 \sqrt{2}\lambda^2~\},$$
		we infer
		\begin{align*}
			&\left | \int e^{it \tau} \tilde{m}_{\mu, \lambda}(\tau, \xi)~d \tau \right| \lesssim \mu \log(\lambda^2/\mu),~~~~~\left | t^2 \int e^{it \tau} \tilde{m}_{\mu, \lambda}(\tau, \xi)~d \tau \right| \lesssim \mu^{-1}.
		\end{align*}
		Integration gives boundedness of the following terms (uniform in $\mu, \lambda$)
		$$\norm{K}{L^1_t L^2_{\xi}} \lesssim \int_{|t| \leq \f{1}{4 \sqrt{2}  \lambda^2}} \norm{K(t, \cdot)}{L^{\infty}}~dt + \int_{|t| \geq  \frac{\sqrt{2}}{\mu}} \norm{K(t, \cdot)}{L^{\infty}}~dt + \int_{\f{1}{4 \sqrt{2}  \lambda^2} \leq |t| \leq \frac{\sqrt{2}}{\mu}}  \norm{K(t, \cdot)}{L^{\infty}}~dt.$$
		For the last term, we estimate
		\begin{align*}
			&\norm{K(t, \cdot)}{L^{\infty}}  \lesssim \int_{\frac{\mu}{\sqrt{2}} \leq ||\tau| - \xi^2| \leq \f{1}{|t|}} \f{\mu}{||\tau| - \xi^2|}~d\tau + \f{1}{t^2}\int_{ ||\tau| - \xi^2| \geq  \f{1}{|t|}} \f{\mu}{||\tau| - \xi^2|^3}~d\tau \lesssim \mu( 1 - \log(|t|\mu)),
		\end{align*}
		and hence
		$$
		\int_{\f{1}{4 \sqrt{2}  \lambda^2} \leq |t| \leq \frac{\sqrt{2}}{\mu}}  \norm{K(t, \cdot)}{L^{\infty}}~dt \lesssim 1.$$
	\end{proof}
	
\begin{Lemma}\label{emb-Lemma} 
	We have 
	\begin{align}\label{onee}
		&W_{\lambda} \subset \lambda^{3} L^{2}_{t,x} ,~~~Z_{\lambda} \subset  \lambda^{\f{d}{2}}L^{\infty}_{t,x}\\[3pt]
		\label{clear}
		&X^{\f12, 1}_{\lambda} \subset Z_{\lambda} \subset X^{\f12, \infty}_{\lambda}.
	\end{align}
\end{Lemma}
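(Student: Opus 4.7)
The plan is to treat the four inclusions in turn, leaning on two recurring tools: \textbf{(A)} Bernstein in time, which is available for any function with spacetime Fourier support in $A_\lambda$ because its $\tau$-support is then contained in $\{|\tau| \lesssim \lambda^2\}$, so in particular $\|F\|_{L^2_{t,x}} \lesssim \lambda \|F\|_{L^1_tL^2_x}$; and \textbf{(B)} symbol inversion for $L$, using that on $A_\lambda \cap \{w \sim \mu\}$ one has $|\tau^2 - |\xi|^4| \sim \mu\lambda^2$, giving a gain of $(\mu\lambda^2)^{-1}$ when reconstructing $u$ from $Lu$ in the Fourier picture.

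For $W_\lambda \subset \lambda^3 L^2_{t,x}$, I fix a near-optimal decomposition $F = F_1 + F_2$. The $F_1 \in X^{-1/2,1}_\lambda$ piece is handled by exploiting $\mu \lesssim \lambda^2$: writing $\|Q_\mu F_1\|_{L^2_{t,x}} = \mu^{1/2}\cdot(\mu^{-1/2}\|Q_\mu F_1\|_{L^2_{t,x}})$ and summing absorbs a factor $\lambda$ and leaves $\|F_1\|_{X^{-1/2,1}_\lambda}$. The $F_2 \in (L^1_tL^2_x)_\lambda$ piece is handled directly by tool \textbf{(A)}. For $Z_\lambda \subset \lambda^{d/2}L^\infty_{t,x}$ I similarly split $u = u_1+u_2$: the $u_1 \in X^{1/2,1}_\lambda$ contribution goes to $L^\infty_t L^q_x$ via Lemma~\ref{trans-lemma}(a) with the admissible pair $(\infty,q)$ for some finite $q$, followed by spatial Bernstein at scale $\lambda$; the $u_2 \in Y_\lambda$ contribution uses $\|u_2\|_{L^\infty_tL^2_x} \leq \|u_2\|_{Y_\lambda}$ together with the same spatial Bernstein.

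The sandwich $X^{1/2,1}_\lambda \subset Z_\lambda \subset X^{1/2,\infty}_\lambda$ has a trivial left inclusion from the definition of $Z_\lambda$; the right inclusion is the substantive part. The $X^{1/2,1}_\lambda$ summand embeds trivially, so only $u_2 \in Y_\lambda$ is interesting. For each dyadic $\mu > 0$ I combine tools \textbf{(B)} and \textbf{(A)}:
\[
   \|Q_\mu u_2\|_{L^2_{t,x}} \lesssim (\mu\lambda^2)^{-1}\|Q_\mu Lu_2\|_{L^2_{t,x}} \lesssim (\mu\lambda^2)^{-1}\mu^{1/2}\|Lu_2\|_{L^1_tL^2_x} \lesssim \mu^{-1/2}\|u_2\|_{Y_\lambda},
\]
which furnishes the uniform bound $\mu^{1/2}\|Q_\mu u_2\|_{L^2_{t,x}} \lesssim \|u_2\|_{Y_\lambda}$.

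The main obstacle I expect is the limiting dyadic block $\mu \searrow 0$, which by the convention fixed after \eqref{decomposition}--\eqref{norm-decomposition} is a homogeneous solution $h$ carrying the initial data of $u_2$. I would control this by checking that both $\|u_2(0)\|_{L^2}$ and $\|\partial_t u_2(0)\|_{\dot{H}^{-2}_x}$ are dominated by $\|u_2\|_{L^\infty_tL^2_x}$: the former is immediate, and for the latter a one-dimensional Bernstein in $t$ (again justified by $|\tau|\lesssim\lambda^2$) yields $\|\partial_t u_2\|_{L^\infty_tL^2_x}\lesssim \lambda^2\|u_2\|_{L^\infty_tL^2_x}$, after which the spatial frequency localization at scale $\lambda$ absorbs the extra $\lambda^2$ when passing from $L^2_x$ to $\dot{H}^{-2}_x$. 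Apart from this bookkeeping at $\mu=0$, no machinery beyond Lemma~\ref{trans-lemma}, Bernstein, and the elementary symbol computation for $L$ is required.
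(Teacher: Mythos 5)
Your arguments for \eqref{onee} and for both inclusions in \eqref{clear} follow the same route as the paper. For $Z_\lambda\subset X^{\frac12,\infty}_\lambda$ the paper passes through the dual form of Lemma \ref{trans-lemma} with $(p,q)=(\infty,2)$, i.e. $\|Lu_2\|_{X^{-\frac12,\infty}_\lambda}\lesssim\|Lu_2\|_{L^1_tL^2_x}$, followed by division by the symbol; that is exactly your combination of tools (A) and (B). Your handling of the $Y_\lambda$ part of the $L^\infty_{t,x}$ embedding by $\|u_2\|_{L^\infty_tL^2_x}$ plus spatial Bernstein on $\{|\xi|\le 2\lambda\}$ is a legitimate and slightly more elementary replacement for the paper's appeal to the endpoint Strichartz bound.

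The step that does not hold up is your justification of $\|\partial_tu_2(0)\|_{\dot H^{-2}}\lesssim\|u_2\|_{L^\infty_tL^2_x}$ for the limiting block. Fourier support in $A_\lambda$ gives only the upper bound $|\xi|\le 2\lambda$: when $|\tau|\sim\lambda^2$ the spatial frequency may be arbitrarily small, and on low spatial frequencies $\dot H^{-2}_x$ is much \emph{larger} than $\lambda^{-2}L^2_x$, so the claim that ``the spatial frequency localization at scale $\lambda$ absorbs the extra $\lambda^2$'' runs in the wrong direction. Concretely, take $u_2(t,x)=e^{i\lambda^2t}\chi(t)g(x)$ with $\chi$ even, $\chi(0)=1$, $\hat\chi$ supported in $[-\lambda^2/4,\lambda^2/4]$, and $\hat g$ supported at $|\xi|\sim\sigma\ll\lambda$; then $\supp\hat u_2\subset A_\lambda$ and $\|u_2\|_{Y_\lambda}\sim\|g\|_{L^2}$, yet $\partial_tu_2(0)=i\lambda^2g$ has $\|\partial_tu_2(0)\|_{\dot H^{-2}}\sim(\lambda/\sigma)^2\|g\|_{L^2}$. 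So the bound you are after cannot be recovered by Bernstein alone; a correct treatment must exploit that only the part of $u_2$ near the characteristic set (the region $P_0$, where $|\xi|\sim\lambda$ genuinely holds) contributes a free component, with the $(1-P_0)$ part handled purely by symbol inversion. To be fair, the paper's own proof asserts $\|u(0)\|_{L^2}+\|\partial_tu(0)\|_{H^{-2}}\lesssim\lambda^{-2}\|Lu\|_{L^1_tL^2_x}+\|u\|_{L^\infty_tL^2_x}$ with no more detail than you supply, so the difficulty is inherited from the source; but the specific justification you offer for it is incorrect as written.
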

\begin{proof}
	For \eqref{clear}, we note that $ X^{\f12, 1}_{\lambda} \subset Z_{\lambda} $ follows by definiton and $ Z_{\lambda} \subset X^{\f12, \infty}_{\lambda}$ is proven as follows.\\[3pt]
	The norm of   $X^{\f12, \infty}_{\lambda}$  is estimated against the norm of the $ X^{\f12, 1}_{\lambda}$ part and further, for the $ L^1_t L^2_x$ part in $ Y_{\lambda}$, we deduce from Lemma \ref{trans-lemma} 
	\begin{align*}
		\norm{u_{\lambda}}{X^{\f12, \infty}_{\lambda}} &\lesssim \lambda^{-2} \norm{Lu_{\lambda}}{X^{-\f12, \infty}_{\lambda}} +  \norm{u(0)}{L^2_x} + \norm{\partial_tu(0)}{H^{-2}_x}\\
		&\lesssim \lambda^{- 2}\norm{Lu_{\lambda}}{L^1_t L^2_{x}} + \norm{u_{\lambda}}{L^{\infty}_t L^2_x},
	\end{align*}
	which reads as
	\begin{align*}
		\norm{u_{\lambda}}{X^{\f12, \infty}_{\lambda}} \lesssim \norm{u_{\lambda}}{Y_{\lambda}},~~ u_{\lambda} \in Y_{\lambda}.
	\end{align*}
	Concerning \eqref{onee} in the Lemma, we note
	\begin{align*}
		\norm{u_{\lambda} }{L^2_{t,x}} \lesssim \lambda \norm{u_{\lambda}}{L^1_t L^2_{x}} \sim \lambda^{ 3} \norm{u_{\lambda}}{\lambda^2 L^1_t L^2_x },~~ u_{\lambda} \in L^1_t L^2_x,
	\end{align*}
	where we used that $ \hat{u}_{\lambda}(\cdot, \xi)$ is localized (in $\tau$) on an interval  on length $\sim \lambda^2$. Hence, since also,
	\begin{align*}
		&\norm{u_{\lambda} }{L^{2}_{t,x}} \lesssim \lambda \sum_{\mu \lesssim \lambda^2} \mu^{- \f12} \norm{Q_{\mu}(u_{\lambda})}{L^2_{t,x}},~~~ u_{\lambda} \in X^{-\f12, 1}_{\lambda},
	\end{align*}
	we obtain the first claim. For the $ L^{\infty}_{t,x}$ embedding, we estimate similarly by Lemma \ref{trans-lemma}
	\begin{align*}
		&\norm{u_{\lambda}}{L^{\infty}_{t,x}} \lesssim \lambda^{\f{d}{2}} \norm{u_{\lambda}}{X^{\f12,1}_{\lambda}}.
	\end{align*}
	For the $ Y_{\lambda}$ part, we obtain by a direct application of the classical Strichartz estimate 
	\begin{align*}
		\norm{u_\lambda}{L^{\infty}_{t,x}} \lesssim \norm{u(0)}{\dot{H}^{\f{d}{2}}} + \norm{\partial_{t}u(0)}{\dot{H}^{\f{d}{2}-2}} + \lambda^{\f{d}{2} - 2}\norm{Lu}{L^1_t L^2_x}
		\lesssim \lambda^{\f{d}{2}} \norm{u_{\lambda}}{Y_{\lambda}}.
	\end{align*}
	from Lemma \ref{linear-bounds} and Lemma \ref{linear-bounds-inhomogeneous} for $ p = q = \infty$. 
\end{proof}

\begin{Propo}\label{Linea-propo}There holds
	\begin{align}\label{first-two-claimes1}
		&Z^{\f{d}{2}} \subset C(\R, \dot{B}^{2,1}_{\f{d}{2}}) \cap \dot{C}^1(\R, \dot{B}^{2,1}_{\f{d}{2}-2})\\\label{first-two-claimes2}
		&Z^{s} \subset C(\R, \dot{H}^s) \cap \dot{C}^1(\R, \dot{H}^{s-2})
	\end{align}
	Further we have
	\begin{align}\label{This-claim}
		&\norm{u}{Z^{\f{d}{2}}} \lesssim \norm{(u(0), \partial_tu(0))}{\dot{B}^{2,1}_{\f{d}{2}} \times \dot{B}^{2,1}_{\f{d}{2}-2}} + \norm{Lu}{W^{\f{d}{2}}},\\ \label{that-claim}
		&\norm{u}{Z^s} \lesssim \norm{(u(0), \partial_tu(0))}{\dot{H}^{s} \times \dot{H}^{s-2}} + \norm{Lu}{W^{s}},~~~ s > \frac{d}{2},\\ \label{last-claim}
		&\norm{Lu}{W^{\f{d}{2}}} \lesssim \norm{u}{Z^{\f{d}{2}}},~~\norm{Lu}{W^s} \lesssim  \norm{u}{Z^{s}},~~~ s > \frac{d}{2}.
	\end{align}
\end{Propo}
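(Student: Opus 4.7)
The plan is to handle all four assertions dyadically, exploiting the splittings $Z_\lambda = X^{\f{1}{2},1}_\lambda + Y_\lambda$ and $W_\lambda = \lambda^2(X^{-\f{1}{2},1}_\lambda + (L^1_t L^2_x)_\lambda)$, and then assemble the global bounds by $\ell^1$-summation in $\lambda$ at the Besov endpoint $s = \f{d}{2}$ and $\ell^2$-summation for $s > \f{d}{2}$. All constants are independent of $\lambda$, so the dyadic reduction is lossless.

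For the continuity estimate \eqref{last-claim}, note that $L$ is a Fourier multiplier with symbol $-(\tau^2 - |\xi|^4)$, whose modulus on the support of $Q_\mu P_\lambda$ is $\lesssim \mu \lambda^2$. Hence if $u \in X^{\f{1}{2},1}_\lambda$ then $\|Q_\mu L u\|_{L^2_{t,x}} \lesssim \mu \lambda^2 \|Q_\mu u\|_{L^2_{t,x}}$, so $\|Lu\|_{X^{-\f{1}{2},1}_\lambda} \lesssim \lambda^2 \|u\|_{X^{\f{1}{2},1}_\lambda}$; while for $u \in Y_\lambda$ the $L^1_t L^2_x$ bound on $Lu$ is already built into the norm. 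This gives $\|Lu\|_{W_\lambda} \lesssim \|u\|_{Z_\lambda}$, and dyadic summation yields both the Besov and $H^s$ versions of \eqref{last-claim}.

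For the main bounds \eqref{This-claim}--\eqref{that-claim}, decompose $u = u_h + v$, where $u_h$ solves $Lu_h = 0$ with $u_h[0] = u[0]$. By energy conservation $P_\lambda u_h \in Y_\lambda$ with $\|P_\lambda u_h\|_{Y_\lambda} \lesssim \|P_\lambda u_0\|_{L^2} + \lambda^{-2}\|P_\lambda u_1\|_{\dot H^{-2}}$, which sums to the desired data norms. For $v$, choose an almost-optimal decomposition $P_\lambda L u = F_1 + F_2$ with $\lambda^{-2}\|F_1\|_{X^{-\f{1}{2},1}_\lambda} + \lambda^{-2}\|F_2\|_{L^1_t L^2_x} \lesssim \|P_\lambda L u\|_{W_\lambda}$. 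For $F_1$, invert the symbol dyadically in modulation: the distribution $v_1$ with $\hat v_1 = -(\tau^2-|\xi|^4)^{-1}\hat F_1$, well-defined modulo homogeneous solutions, satisfies $\|v_1\|_{X^{\f{1}{2},1}_\lambda} \lesssim \lambda^{-2}\|F_1\|_{X^{-\f{1}{2},1}_\lambda}$ by the same symbol bound reversed. For $F_2$, the standard Duhamel formula and the $L^1_t L^2_x \to L^\infty_t L^2_x$ energy estimate (see \eqref{Duhamel}) give a solution $v_2$ with $\|v_2\|_{L^\infty_t L^2_x} + \lambda^{-2}\|Lv_2\|_{L^1_t L^2_x} \lesssim \lambda^{-2}\|F_2\|_{L^1_t L^2_x}$, placing $v_2$ into $Y_\lambda$. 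Correcting $v_1 + v_2$ by an $L^2 \times \dot H^{-2}$ homogeneous solution (absorbed into the $Y_\lambda$ constituent) to restore $v[0] = 0$ produces the required $\|P_\lambda v\|_{Z_\lambda} \lesssim \lambda^{-2}\|P_\lambda Lu\|_{W_\lambda}$; summing dyadically finishes \eqref{This-claim} and \eqref{that-claim}.

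Finally, the continuity embeddings \eqref{first-two-claimes1}--\eqref{first-two-claimes2} reduce to showing $Z_\lambda \hookrightarrow C_t L^2_x \cap \dot C^1_t \dot H^{-2}_x$ dyadically. The $Y_\lambda$ component is direct from the defining norm: $f \in L^\infty_t L^2_x$ and $Lf \in L^1_t L^2_x$ imply $f \in C_t L^2_x$ and $\partial_t f \in C_t \dot H^{-2}_x$ by integration of the equation. The $X^{\f{1}{2},1}_\lambda$ component is handled via the atomic decomposition \eqref{decomposition}--\eqref{norm-decomposition}: at $b = \f{1}{2}, p = 1$ the series converges absolutely in $C_t L^2_x$, each atom being a modulated free solution in $C(\R, L^2)$. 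Summing dyadic blocks in $\ell^1$ (for $s = \f{d}{2}$) respectively $\ell^2$ (for $s > \f{d}{2}$) then promotes the block-wise embeddings to the stated Besov/Sobolev continuity.

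The main obstacle is the symbol-inversion step for $F_1$: one must ensure that the divided-out distribution is well-defined and that the initial-data correction, needed to make $v(0) = \partial_t v(0) = 0$, can be absorbed into the $Y_\lambda$-part without loss. This works precisely because $X^{\f{1}{2},1}_\lambda$ is a space defined modulo $L$-homogeneous solutions, so the freedom to add an $L^\infty_t L^2_x$-controlled homogeneous solution is exactly what the $Y_\lambda$ component of $Z_\lambda$ provides.
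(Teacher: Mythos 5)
Your proposal is correct and follows essentially the same route as the paper: dyadic reduction, the symbol bound $|\tau^2-|\xi|^4|\sim\mu\lambda^2$ for \eqref{last-claim}, a Duhamel splitting into homogeneous part (energy estimate) plus inhomogeneous part handled by symbol inversion on the $X^{-\f12,1}_{\lambda}$ constituent and the $L^1_tL^2_x\to L^{\infty}_tL^2_x$ bound on the $Y_{\lambda}$ constituent, and block-wise continuity in $C_tL^2_x$ summed in $\ell^1$ resp. $\ell^2$. One minor slip: the homogeneous energy bound should read $\norm{P_{\lambda}u_1}{\dot H^{-2}}$ rather than $\lambda^{-2}\norm{P_{\lambda}u_1}{\dot H^{-2}}$, but this does not affect the summation or the conclusion.
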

\begin{proof}
	The claim \eqref{last-claim} follows from the definition of $ Z_{\lambda}, W_{\lambda}$ since $ \lambda^2 L^1_t L^2_x = L Y_{\lambda}$  and for the $X^{\f12, p}_{\lambda}$ part, we use 
	$$ \norm{Lu}{X^{-\f12,1 }_{\lambda}} \lesssim \lambda^2 \norm{u}{X^{\f12,1 }_{\lambda}}.$$ For \eqref{first-two-claimes1} and \eqref{first-two-claimes2}, if suffices to show
	\begin{align*}
		\norm{P_{\lambda}(D)u}{L^{\infty}_t\dot{B}^{2,1}_{\f{d}{2}}} + \norm{P_{\lambda}(D)\partial_tu(t)}{L^{\infty}_t\dot{B}^{2,1}_{\f{d}{2}-2}} \lesssim \lambda^{\f{d}{2}}\norm{P_{\lambda}(D)u}{Z_{\lambda}},~
	\end{align*}
	where by Bernstein
	\begin{align}\norm{P_{\lambda}(D)\partial_tu(t)}{L^{\infty}_t\dot{B}^{2,1}_{\f{d}{2}-2}} \lesssim 	\norm{P_{\lambda}(D)u}{L^{\infty}_t\dot{B}^{2,1}_{\f{d}{2}}}.
	\end{align}
	Then, since 
	\begin{align}
		\norm{P_{\lambda}(D)u}{L^{\infty}_t\dot{B}^{2,1}_{\f{d}{2}}} \leq \sum_{\tilde{\lambda} \leq \lambda} \big(\tilde{\lambda}\slash\lambda\big)^{\f{d}{2}} \lambda^{\f{d}{2}} \norm{P_{\lambda}(D) P_{\tilde{\lambda}}(\nabla)u}{L^{\infty}_tL^2_{x}},
	\end{align}
	the embedding and the continuity in time follow from $ Z_{\lambda} \subset S_{\lambda} \subset C_tL^2_x$ and we proceed similarly for the emdedding of $ Z^s$ using square sums. Now for \eqref{This-claim} and \eqref{that-claim}, we use Duhamel's formula 
	$$ u = S(u(0), \partial_tu(0)) + V(Lu),$$
	where $ S(u_0, u_1) $ solves \eqref{equation-linear} for $F=0$ and $ VF$ solves $ \eqref{equation-linear} $ for $ u_0 = u_1 = 0$. The homogeneous solution is estimated by the Strichartz bound in Lemma \ref{linear-bounds} in the energy case $ p = \infty, q = 2 $. This is also directly verified by 
	$$ \mathcal{F}_x(P_{\lambda}(D)S(u(0), \partial_tu(0)))(t, \xi) = \varphi(2^{\f14} |\xi| / \lambda)(\cos( |\xi|^2 t)\widehat{u(0)}(\xi) + |\xi|^{-2} \sin( |\xi|^2 t) \widehat{\partial_tu(0)}(\xi)),  $$
	and hence
	$$ \norm{P_{\lambda} S(u(0), u_t(0))}{Z_{\lambda}} \lesssim \norm{P_{\lambda} S(u(0), u_t(0))}{L^{\infty}_t L^2_x} \lesssim \norm{u_{\lambda}(0)}{L^2} + \norm{\partial_t u_{\lambda}(0)}{H^{-2}}.$$
	For the inhomogeneous solution $ V(Lu)$ we estimate the $ X^{\f12, p}_{\lambda}$ part by
	\begin{align}
		\norm{V(Lu_{\lambda})}{X^{\f12, 1}_{\lambda} } \lesssim \lambda^{-2} \norm{Lu_{\lambda}}{X^{-\f12, 1}_{\lambda}}, 
	\end{align}
	and for $Y_{\lambda}$, we use Lemma \ref{linear-bounds-inhomogeneous} in order to conclude
	\begin{align*}
		\norm{V(Lu_{\lambda})}{Y_{\lambda}} = \lambda^{-2}\norm{Lu_{\lambda}}{L^1_t L^2_x} + \norm{u_{\lambda}}{L^{\infty}_t L^2_x}\lesssim   \lambda^{-2}\norm{Lu_{\lambda}}{L^1_t L^2_x}.
	\end{align*}
\end{proof}
We further estimate the lateral Strichartz norm  and establish the maximal function estimate.
\begin{Propo}\label{emb-propo-2}
	For any dyadic number $ \lambda \in 2^{\Z} $ we have
	\begin{align}
		Z_{\lambda} &\subset S_{\lambda} \cap \sum_{e \in \mathcal{M}} S_{\lambda}^e, \label{embed1}\\
		Z_{\lambda} &\subset \bigcap_{e \in \mathcal{M}} \lambda^{\f{n-1}{2}} L^2_e L^{\infty}_{t, e^{\perp}}.\label{embed2}
	\end{align}
\end{Propo}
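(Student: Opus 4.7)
The plan is to decompose $f \in Z_\lambda$ as $f = f_1 + f_2$ with $f_1 \in X^{\f12,1}_\lambda$ and $f_2 \in Y_\lambda$, and verify each claimed embedding separately on the two summands.

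For $f_1 \in X^{\f12,1}_\lambda$, the trace estimates of Lemma \ref{trans-lemma} deliver the three target bounds immediately: \eqref{trans2} gives the classical Strichartz control defining $S_\lambda$; \eqref{trans1} gives the maximal function bound of \eqref{embed2}; and for the $\sum_e S_\lambda^e$ part of \eqref{embed1} I split $f_1 = \sum_{e \in \mathcal{M}} P_e(\nabla) f_1$ via the angular partition of unity and apply \eqref{trans3} to each summand, whose Fourier support now lies in $A_\lambda \cap A_e$. The bounded angular overlap of $\{A_e\}_{e\in\mathcal{M}}$ is used to control the resulting sum.

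For $f_2 \in Y_\lambda$, use Duhamel to write $f_2 = g_0 + V[Lf_2]$, where $g_0$ solves the free Cauchy problem with data $(f_2(0), \partial_t f_2(0))$. The inhomogeneous Strichartz endpoint $(p,q) = (\infty, 2)$ from \eqref{inhom2} gives $\norm{V[Lf_2]}{L^\infty_t L^2_x} \lesssim \lambda^{-2}\norm{Lf_2}{L^1_t L^2_x}$, so $\norm{g_0}{L^\infty_t L^2_x} \lesssim \norm{f_2}{Y_\lambda}$. Since $g_0$ is frequency-localized at $\lambda$ and free, an explicit spectral representation via the Schr\"odinger half-waves $g_0 = \tfrac{1}{2}(e^{-it\Delta} h_+ + e^{it\Delta} h_-)$ together with the time-averaging of the mixed term (exploiting that $e^{2it|\xi|^2}$ oscillates on the frequency annulus) yields $\norm{g_0(0)}{L^2} + \norm{\partial_t g_0(0)}{\dot H^{-2}} \sim \norm{g_0}{L^\infty_t L^2_x}$. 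Applying Lemma \ref{linear-bounds} to $g_0$ (after the angular decomposition $g_0 = \sum_e P_e(\nabla) g_0$ in the $\sum_e S_\lambda^e$ case, so the initial data lies in $A_\lambda^d \cap A_e$) then produces all three bounds for $g_0$.

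For $V[Lf_2]$, split via the characteristic cutoff $V[Lf_2] = P_0 V[Lf_2] + (1-P_0)V[Lf_2]$. On the away-from-characteristic piece $(1-P_0)V[Lf_2]$, dividing by the symbol of $L$ and Lemma \ref{multiplierr}(b) yield $\norm{(1-P_0)V[Lf_2]}{X^{\f12,1}_\lambda} \lesssim \lambda^{-2}\norm{(1-P_0) Lf_2}{X^{-\f12,1}_\lambda} \lesssim \norm{Lf_2}{L^1_t L^2_x}$, reducing this contribution to the $X^{\f12,1}_\lambda$ case already handled in the first step. On $P_0 V[Lf_2]$, decompose angularly as $\sum_e P_e(\nabla) P_0 V[Lf_2]$; each summand solves the Cauchy problem with zero data and forcing $P_e(\nabla) P_0 Lf_2$, which by Lemma \ref{multiplierr}(a) is uniformly controlled in $L^1_t L^2_x$ and has Fourier support precisely in $A_\lambda \cap B_e$. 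The inhomogeneous lateral Strichartz \eqref{inhom1} and maximal function bound \eqref{inhom3} from Lemma \ref{linear-bounds-inhomogeneous} then supply the $S_\lambda^e$ and $L^2_e L^\infty_{t, e^\perp}$ estimates with the claimed powers of $\lambda$, while \eqref{inhom2} handles the standard Strichartz control.

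The main obstacle is the lateral and maximal function estimates for the $P_0 V[Lf_2]$ contribution: the inhomogeneous linear estimates of Lemma \ref{linear-bounds-inhomogeneous} require the forcing to have Fourier support in $A_\lambda \cap B_e$, so one has to verify that the composition $P_e(\nabla) P_0$ produces such a localization of $Lf_2 \in L^1_t L^2_x$ with uniform boundedness in $e$ and bounded overlap upon summation. This is exactly the content of Lemma \ref{multiplierr} combined with the construction of the partition $\{h_e\}_{e \in \mathcal{M}}$, and it is the step that justifies reducing the Duhamel analysis to the dispersion estimates for the Schr\"odinger group near the characteristic paraboloid.
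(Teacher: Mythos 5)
Your proposal is correct and follows essentially the same route as the paper's proof: the same splitting of $Z_{\lambda}$ into its $X^{\f12,1}_{\lambda}$ and $Y_{\lambda}$ parts, the angular decomposition $\sum_{e}P_e(\nabla)$, the $P_0/(1-P_0)$ splitting of the Duhamel term, and the same ingredients (Lemma \ref{trans-lemma}, Lemmas \ref{linear-bounds} and \ref{linear-bounds-inhomogeneous}, and Lemma \ref{multiplierr}). The only small imprecision is the attribution of the step $\norm{(1-P_0)Lf_2}{X^{-\f12,1}_{\lambda}}\lesssim \norm{Lf_2}{L^1_tL^2_x}$ to Lemma \ref{multiplierr}(b); in the paper this follows instead from the dual form of Lemma \ref{trans-lemma} together with the equivalence $(1-P_0)X^{-\f12,1}_{\lambda}\sim (1-P_0)X^{-\f12,\infty}_{\lambda}$, which holds uniformly in $\lambda$ because $1-P_0$ restricts the modulation to $\mu\sim\lambda^2$.
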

where $ S_{\lambda}^e  $ is the closure of
\begin{align*} \bigg\{ f \in \mathcal{S}~|~ \supp(\hat{f})\subset A_{\lambda},~ \norm{f}{S_{\lambda}^e} = \sup_{(p,q)} \big( \lambda^{\f{1}{p} + \f{(d+1)}{q} - \f{d}{2} }\norm{f}{L^p_{e}L^q_{t, e^{\perp}}} \big) < \infty \bigg\}
\end{align*}
with $(p,q)$ ranging over all admissible pairs with $ p \geq 2$.
\begin{proof}
	For  \eqref{embed1}, we first consider the embedding $ Z_{\lambda} \subset S_{\lambda} $. Thus, the $X^{\f12, 1}_{\lambda}$ part satisfies for any admissible pair $(p,q)$
	$$ \lambda^{\f{2}{p} + \f{d}{q} - \f{d}{2}} \norm{u_{\lambda}}{L^p_t L^q_x} \lesssim \norm{u_{\lambda}}{X^{\f12, 1}_{\lambda}},$$
	by Lemma \ref{trans-lemma}. Likewise, we obtain the same bound against the $Y_{\lambda} $ part by Lemma \ref{linear-bounds} and Lemma \ref{linear-bounds-inhomogeneous}.
	For the $S_{\lambda}^e$ embedding, we decompose as follows
	\begin{align}\label{decompositionn}
		u_{\lambda} = \sum_{ e \in \mathcal{M}} u^e_{\lambda},~~ u_{\lambda}^e = P_e(\nabla) u_{\lambda},
	\end{align}
	which suffices to obtain \eqref{embed1} for the $X^{\f12, 1}_{\lambda}$ part directly from  Lemma \ref{trans-lemma}. Now, considering the $Y_{\lambda}$ part of $Z_{\lambda}$, we further write
	\begin{align*}
		u_{\lambda }^e = P_0 u_{\lambda}^e +  (1 - P_0) u_{\lambda}^e.
	\end{align*}
	Then, $P_0 u_{\lambda}^e $ is localized in $ B_e $ and (by definition of $ P_0,~ 1 - P_0$)
	\begin{align*}
		P_0 u_{\lambda}^e &= S(u^e_{\lambda}(0), \partial_tu^e_{\lambda}(0)) +  V( P_0L(u_{\lambda}^e)),~~~(1 - P_0) u_{\lambda}^e = V( (1 - P_0)L(u_{\lambda}^e)).
	\end{align*}
	Hence  by Lemma \ref{linear-bounds} and Lemma \ref{linear-bounds-inhomogeneous} we have
	\begin{align}
		\lambda^{\f{1}{p} + \f{(d+1)}{q} - \f{d}{2} }\norm{P_0u_{\lambda}^e}{L^p_{e}L^q_{t, e^{\perp}}} &\lesssim  \lambda^{- 2} \norm{P_0 Lu_{\lambda}^e}{L^1_t L^2_{x}} + \norm{u_{\lambda}^e(0)}{L^2} + \norm{\partial_tu_{\lambda}^e(0)}{H^{-2}}\\ \nonumber
		&\lesssim \norm{u_{\lambda}}{Y_{\lambda}^e},
	\end{align}
	by Lemma \ref{multiplierr} and continuity of $ P_e(\nabla) $ on $L^1_t L^2_x$. Similarly, by Lemma \ref{trans-lemma}, we infer
	\begin{align*}
		\lambda^{\f{1}{p} + \f{(d+1)}{q} - \f{d}{2} }\norm{(1 - P_0)u_{\lambda}^e}{L^p_{e}L^q_{t, e^{\perp}}} \lesssim  \norm{V(1 - P_0) (Lu_{\lambda}^e)}{X^{\f12, 1}_{\lambda} }
		& \lesssim  \lambda^{-2}\norm{(1 - P_0) Lu_{\lambda}^e}{X^{-\f12, 1}_{\lambda} }\\
		& \lesssim \lambda^{-2} \norm{(1 - P_0) Lu_{\lambda}^e}{X^{-\f12, \infty}_{\lambda} }\\
		& \lesssim \norm{u_{\lambda}}{Y_{\lambda}^e},
	\end{align*}
	where we used  $ (1 - P_0)X^{\f12, 1}_{\lambda} \sim (1 - P_0)X^{\f12, \infty}_{\lambda}$ uniform in the frequency $ \lambda \in 2^{\Z}$ and the dual trace inequelity from Lemma \ref{trans-lemma} in the last step. Hence we sum over $ e \in \mathcal{M}$ and take  the infimum over $ u_{\lambda} = \sum_e u_{\lambda}^e$ with $ u_{\lambda}^e \in S_{\lambda}^e$.
	The $ L^2_e L^{\infty}_{t, e^{\perp}} $ embedding \eqref{embed2} follows similarly using Lemma \ref{trans-lemma}, the decomposition \eqref{decompositionn} and Lemma \ref{linear-bounds}, \ref{linear-bounds-inhomogeneous}. Especially
	\begin{align}
		\sup_{\tilde{e}}\big(\lambda^{\f{1-d}{2}}\norm{P_0u_{\lambda}^e}{L^2_{\tilde{e}}L^{\infty}_{t, \tilde{e}^{\perp}}}\big) &\lesssim \norm{u_{\lambda}}{Y_{\lambda}},\\
		\sup_{\tilde{e}}\big(\lambda^{\f{1-d}{2}}\norm{(1 - P_0)u_{\lambda}^e}{L^2_{\tilde{e}}L^{\infty}_{t, \tilde{e}^{\perp}}}\big) &\lesssim \lambda^{-2}\norm{Lu_{\lambda}}{X^{ - \f12, \infty}_{\lambda}} \lesssim \norm{u_{\lambda}}{Y_{\lambda}}
	\end{align}
	Again the estimate for the $X^{\f12, 1}_{\lambda}$ part follows directly by Lemma \ref{trans-lemma}.
\end{proof}
\begin{comment}
Similar as to wave maps, we observe from \eqref{Duhamel} that solutions of \eqref{general} in $ Z^{\f{n}{s}},~ Z^s,~ s > \f{n}{2}$ scatter in the sense of Theorem \ref{main1}, if the following Lemma holds true. 
\begin{Lemma}
For a dyadic number $ \lambda > 0$ and $ u \in Z_{\lambda}$ the limits
\begin{align}\label{limit1}
\lim_{t \to \infty} e^{\pm i t \Delta} (\partial_t u(t) \pm i \Delta u(t)),~~\lim_{t \to -\infty} e^{\pm i t \Delta} (\partial_t u(t) \pm i \Delta u(t))
\end{align}
exist in $ \lambda^2 (L^2_{x})_{\lambda}$.
\end{Lemma}
\begin{proof}
If $\hat{u}$ is localized in $B_e $, we have the following Lemma.
\begin{Lemma}
Let $\hat{u}$ have support in the set $B_e \cap A_{\lambda}$, where $ e \in \mathcal{M}	$ and $ \lambda$ is a dyadic frequency. Then
\begin{align}
u(t, x_e  , x_{e^{\perp}}) = v(t, x_e, x_{e^{\perp}}) + \int_{- \infty}^{x_e} g_s( t, x_e, x_{e^{\perp}})~ds,
\end{align}
where $ v, g_s $ solve \eqref{equation-linear} for $F = 0$ (and $s < x_e $) and 
\begin{align}
\norm{v}{S_{\lambda}} + \int_{- \infty}^{x_e} \norm{g_s}{S_{\lambda}}~ds \lesssim \lambda^{- \f52} \norm{Lu}{L^{1}_e L^2_{t, e^{\perp}}} + \norm{u}{S_{\lambda}}.
\end{align}
\end{Lemma}

\end{proof}
\end{comment}
\section{Bilinear estimates}\label{sec:multi}
For the bilinear interaction, we  write 
\begin{align}
	u \cdot v =& \sum_{\lambda_1, \lambda_2, \lambda} (u_{\lambda_1} v_{\lambda_2})_{\lambda} \nonumber \\
	=& \sum_{\lambda_2 \gg \lambda_1 } \big[ (u_{\lambda_1} v_{\lambda_2})_{\lambda_2/2} + (u_{\lambda_1} v_{\lambda_2})_{\lambda_2} + (u_{\lambda_1} v_{\lambda_2})_{2\lambda_2} \big]\label{yes}\\
	& + \sum_{\lambda_1 \gg \lambda_2 } \big[ (u_{\lambda_1} v_{\lambda_2})_{lambda_2/2} + (u_{\lambda_1} v_{\lambda_2})_{\lambda_1} + (u_{\lambda_1} v_{\lambda_2})_{2\lambda_1} \big]\\
	& + \sum_{|\log_2(\lambda_1/\lambda_2)|\sim 1} \sum_{ \lambda \lesssim \max \{\lambda_1, \lambda_2\}} (u_{\lambda_1} v_{\lambda_2})_{\lambda}.\label{yyes}
\end{align}
Due to symmetry, we restrict \eqref{yes} - \eqref{yyes} to
\begin{align*}
	&\sum_{\lambda_2 \gg \lambda_1 } \big[ (u_{\lambda_1} v_{\lambda_2})_{\lambda_2/2} + (u_{\lambda_1} v_{\lambda_2})_{\lambda_2} + (u_{\lambda_1} v_{\lambda_2})_{2\lambda_2} \big]\\
	&+ \sum_{\lambda_1 \sim \lambda_2} \sum_{ \lambda \lesssim  \lambda_2} (u_{\lambda_1} v_{\lambda_2})_{\lambda},
\end{align*}
and thus further reduce to the  interactions
\[ \lambda_1 \ll \lambda_2:~~~(u_{\lambda_1} v_{\lambda_2})_{\lambda_2},~~~\text{and}~~~\lambda_1 \leq \lambda_2:~~~(u_{\lambda_2} v_{\lambda_2})_{\lambda_1}. \]
\begin{Lemma}\label{algebra}
	\begin{align}
		&(a)~~~\norm{u_{\lambda_1}v_{\lambda_2}}{Z_{\lambda_2}} \lesssim \lambda_1^{\f{d}{2}} \norm{u_{\lambda_1}}{Z_{\lambda_1}} \norm{v_{\lambda_2}}{Z_{\lambda_2}},~~ \lambda_1 \ll \lambda_2\\[3pt]
		&(b)~~~\norm{(u_{\lambda_2}v_{\lambda_2})_{\lambda_1}}{Z_{\lambda_1}} \lesssim \lambda_2^{\f{d}{2}} \norm{u_{\lambda_2}}{Z_{\lambda_2}} \norm{v_{\lambda_2}}{Z_{\lambda_2}},~~\lambda_1 \leq \lambda_2. 
	\end{align}
\end{Lemma}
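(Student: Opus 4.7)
The plan is to split $u_{\lambda_1} = u^X + u^Y$ and $v_{\lambda_2} = v^X + v^Y$ according to $Z_\lambda = X^{\f12,1}_\lambda + Y_\lambda$ and treat the four cross products separately; in each case I would choose whether to place the product in the $X^{\f12,1}$ or the $Y$ summand of $Z$ at the output frequency. For the $X\cdot X$ term I plan to use the trace estimate of Lemma \ref{trans-lemma} to embed each factor into a (lateral) Strichartz space, apply H\"older, and close with the dual trace estimate from the Remark following that lemma; Bernstein at the low frequency $\lambda_1$ supplies the $\lambda_1^{d/2}$ gain. The mixed $X\cdot Y$ and $Y\cdot X$ pieces are analogous, using in addition the embedding $Z_{\lambda_1}\subset \lambda_1^{d/2}L^\infty_{t,x}$ from Lemma \ref{emb-Lemma} on the low-frequency factor.

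The critical case is $Y\cdot Y$, which I aim to place in $Y_{\lambda_2}$. The $L^\infty_tL^2_x$ half of the $Y_{\lambda_2}$-norm is immediate from H\"older plus Bernstein. For the $\lambda_2^{-2}L^1_tL^2_x$ half of the norm I would expand
\[
L(u^Y v^Y) = v^Y Lu^Y + u^Y Lv^Y + 2\partial_t u^Y\partial_t v^Y + 2\Delta u^Y\Delta v^Y + 4\nabla\Delta u^Y\!\cdot\!\nabla v^Y + 4\nabla u^Y\!\cdot\!\nabla\Delta v^Y + 4\nabla^2 u^Y\!:\!\nabla^2 v^Y,
\]
which mirrors the null form $\mathcal{Q}$ of \eqref{null-structure}. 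The two terms involving $Lu^Y$ and $Lv^Y$ are handled by $L^\infty_{t,x}\times L^1_tL^2_x$ H\"older and Bernstein. The quadratic-derivative cross terms are the main obstacle, in particular the top-order piece $\nabla\Delta u^Y\!\cdot\!\nabla v^Y$, where one loses three derivatives on the low-frequency factor and one on the high-frequency factor; since $v^Y\in L^\infty_tL^2_x$ lies in no Strichartz space at the $L^2_{t,x}$ scale, plain H\"older in time fails. The recovery mechanism is the lateral decomposition $\nabla v^Y = \sum_{e\in\mathcal{M}} P_e(\nabla)\nabla v^Y$, an alignment of the $x_e$-axis with the high-frequency direction, and the factorization
\[
\norm{fg}{L^2_{t,x}} \leq \norm{f}{L^2_e L^\infty_{t,e^\perp}}\,\norm{g}{L^\infty_e L^2_{t,e^\perp}}
\]
combined with the smoothing bounds $\norm{\cdot}{L^2_e L^\infty_{t,e^\perp}}\lesssim \lambda^{(d-1)/2}\norm{\cdot}{Z_\lambda}$ and $\norm{\cdot}{L^\infty_e L^2_{t,e^\perp}}\lesssim \lambda^{-1/2}\norm{\cdot}{Z_\lambda}$ coming from Proposition \ref{emb-propo-2}. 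The resulting $L^2_{t,x}$ bound is then transferred back to the admissible side of $Z_{\lambda_2}$, reabsorbed in the $X^{\f12,1}_{\lambda_2}$ summand via the dual trace estimate whenever the $L^1_tL^2_x$ endpoint is not reachable, at total cost $\lambda_1^{d/2}$. The derivative loss is compensated by the factor $\lambda^{(d-1)/2}$, and this is exactly why the argument demands $d\geq 3$.

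Part $(b)$ follows the same four-way scheme with the output at the low frequency $\lambda_1\leq\lambda_2$. The $X\cdot X$ case reduces to a standard modulation-convolution estimate in $X^{s,b}$, where the desired $\lambda_2^{d/2}$ prefactor appears by a Bernstein step at the output frequency $\lambda_1$ when passing from $L^r_x$ to $L^2_x$. The $Y\cdot Y$ case runs through the same lateral factorization and smoothing bounds as in part $(a)$, the $\lambda_2^{d/2}$ gain again arising from the output Bernstein, while the mixed pieces interpolate. In both parts, the single obstacle is the control of the highest-order cross term $\nabla\Delta u\cdot\nabla v$, for which only the lateral Strichartz / maximal function machinery of Section \ref{subsec:Linear-est} suffices; this is the principal reason for introducing the spaces $Z_\lambda,W_\lambda$ rather than working in Tataru's original $F,\square F$ spaces from \cite{tataru1}.
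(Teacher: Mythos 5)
Your toolkit is the right one --- the lateral factorization $L^{2}_{e}L^{\infty}_{t,e^{\perp}}\cdot L^{\infty}_{e}L^{2}_{t,e^{\perp}}\subset L^{2}_{t,x}$ via Proposition \ref{emb-propo-2}, the Leibniz expansion of $L(uv)$, and placement into $X^{\f12,1}_{\lambda_2}$ through an $L^{2}_{t,x}$ bound --- but your organizing decomposition (splitting both inputs into $X+Y$) is not the one that makes the argument close. The paper instead splits the \emph{output modulation} at the threshold $\mu_0=\lambda_1\lambda_2$, writing $(u_{\lambda_1}v_{\lambda_2})_{\lambda_2}=Q_{\leq 4\lambda_1\lambda_2}(\cdots)+(1-Q_{\leq 4\lambda_1\lambda_2})(\cdots)$. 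This matters already for the piece you put into $X^{\f12,1}_{\lambda_2}$: converting the bilinear bound $\norm{(u_{\lambda_1}v_{\lambda_2})_{\lambda_2}}{L^2_{t,x}}\lesssim\lambda_1^{\f{d-1}{2}}\lambda_2^{-\f12}\norm{u_{\lambda_1}}{Z_{\lambda_1}}\norm{v_{\lambda_2}}{Z_{\lambda_2}}$ into an $X^{\f12,1}_{\lambda_2}$ bound costs $\sum\mu^{\f12}$ over the admitted output modulations; summing only up to $\lambda_1\lambda_2$ gives exactly $\lambda_1^{\f{d}{2}}$, while summing over all $\mu\lesssim\lambda_2^2$, as your scheme implicitly would, loses a factor $(\lambda_2/\lambda_1)^{\f12}$.

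The more serious gap is in your critical case. You identify the worst term as $\nabla\Delta u^{Y}\cdot\nabla v^{Y}$ (three derivatives on the low frequency); the dangerous term is the opposite one, $\nabla u_{\lambda_1}\cdot\nabla\Delta v_{\lambda_2}$, of size $\lambda_1\lambda_2^{3}\gg\lambda_2^{2}$. To place it in $\lambda_2^{2}\lambda_1^{\f{d}{2}}L^1_tL^2_x$ you need time-integrability of $v_{\lambda_2}$ at the $L^2_x$ scale, and a $Y_{\lambda_2}$ function has none globally in $t$: $L^{\infty}_tL^2_x$ does not suffice, and the lateral factorization only returns an $L^{2}_{t,x}$ bound on the product, never an $L^1_tL^2_x$ bound, so "transferring back via the dual trace estimate" does not produce the $L^1_tL^2_x$ half of the $Y_{\lambda_2}$ norm. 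The missing mechanism is modulation transfer plus the multiplier bound: since the output modulation exceeds $4\lambda_1\lambda_2\gg\lambda_1^2$, identity \eqref{iden-mod} forces comparable modulation onto $v_{\lambda_2}$, and Lemma \ref{multiplierr}~(b) then yields $\norm{(1-Q_{\leq\lambda_1\lambda_2})v_{\lambda_2}}{L^1_tL^2_x}\lesssim(\lambda_1\lambda_2)^{-1}\norm{v_{\lambda_2}}{Y_{\lambda_2}}$. Factoring $\nabla u_{\lambda_1}$ off in $L^{\infty}_{t,x}$ gives $\lambda_1^{1+\f{d}{2}}\cdot\lambda_2^{3}\cdot(\lambda_1\lambda_2)^{-1}=\lambda_1^{\f{d}{2}}\lambda_2^{2}$, exactly the allowed size --- this is why the threshold is $\lambda_1\lambda_2$ rather than the more natural $\lambda_1^2$. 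Without the output-modulation splitting, \eqref{iden-mod}, and Lemma \ref{multiplierr}~(b) (which your proposal never invokes), the term $\nabla u_{\lambda_1}\cdot\nabla^3 v_{\lambda_2}$, and hence part (a), cannot be closed. Part (b) as you sketch it is essentially the paper's argument (a single $L^2_{t,x}$ bound from the lateral spaces, summed over output modulations $\mu\leq 4\lambda_1^2$), except that the $\lambda_2^{\f{d}{2}}$ prefactor comes from the lateral norms of the two $\lambda_2$-factors, not from a Bernstein step at the output.
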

\begin{proof}
	For part $(a)$, we decompose 
	\begin{align}\label{deco}
		(u_{\lambda_1}v_{\lambda_2})_{\lambda_2} = Q_{\leq 4 \lambda_1 \lambda_2 }(u_{\lambda_1}v_{\lambda_2})_{\lambda_2} +  (1 - Q_{\leq 4 \lambda_1 \lambda_2})(u_{\lambda_1}v_{\lambda_2})_{\lambda_2},
	\end{align} 
Here splitting the modulation by  $ \mu = \lambda_1 \lambda_2$ (instead of e.g. the natural choice $ \lambda_1^2$) is necessary in order to handle 
$$	L(u_{\lambda_1} ( 1 - Q_{ \leq  \lambda_1 \lambda_2})v_{\lambda_2})_{\lambda_2} \in L^1_t L^2_x $$
and specifically the  worst interaction $ \nabla_xu_{\lambda_1} \cdot \nabla_x^3 v_{\lambda_2}$ which is done below. The smoothing is then exploited via 
$$ L^{2}_e L^{\infty}_{t, e^{\perp}} \cdot  L^{\infty}_e L^2_{t, e^{\perp}} \subset L^2$$
for the term $  Q_{\leq 4 \lambda_1 \lambda_2 }(u_{\lambda_1}v_{\lambda_2})_{\lambda_2}$ as follows . First, we place $ Q_{\leq 4 \lambda_1 \lambda_2 }(u_{\lambda_1}v_{\lambda_2})_{\lambda_2} \in X^{\f12, 1}_{\lambda_2}$ by estimating
	\begin{align}\label{hilfs}
		\norm{(u_{\lambda_1}v_{\lambda_2})_{\lambda_2}}{L^2_{t,x}} \lesssim \lambda_1^{\f{d-1}{2}} \lambda_2^{- \f12}\norm{u_{\lambda_1}}{Z_{\lambda_1}} \norm{v_{\lambda_2}}{Z_{\lambda_2}}.
	\end{align}
	Then, from $ X_{\lambda_2}^{\f12, 1} \subset Z_{\lambda_2}$, \eqref{hilfs} gives
	\begin{align*}
		\norm{ Q_{\leq 4 \lambda_1 \lambda_2 }(u_{\lambda_1}v_{\lambda_2})_{\lambda_2}}{Z_{\lambda_2}}
		&\lesssim 
		\norm{ Q_{\leq 4 \lambda_1 \lambda_2 }(u_{\lambda_1}v_{\lambda_2})_{\lambda_2}}{X_{\lambda_2}^{\f12, 1}}\\ 
		&\lesssim 
		\big(\sum_{\mu \leq 4 \lambda_1 \lambda_2} \mu^{\f12} (\lambda_1 \lambda_2)^{-\f12} \big) \lambda_1^{\f{d}{2}} \norm{u_{\lambda_1}}{Z_{\lambda_1}} \norm{v_{\lambda_2}}{Z_{\lambda_2}}.
	\end{align*}
	For \eqref{hilfs}, we write $ u_{\lambda_1} v_{\lambda_2} =  \sum_{e \in \mathcal{M}} u_{\lambda_1}v_{\lambda_2}^e$ where $ v_{\lambda_2}^e \in S_{\lambda_2}^e$. Hence
	\begin{align}
		\norm{(u_{\lambda_1} v_{\lambda_2}^e)_{\lambda_2}}{L^2_{t,x}} &\leq \norm{u_{\lambda_1}}{L^{2}_e L^{\infty}_{t, e^{\perp}} } \norm{v_{\lambda_2}^e}{L^{\infty}_e L^2_{t, e^{\perp}}}\\ \nonumber
		& \leq \lambda_1^{\f{d-1}{2}} \norm{u_{\lambda_1}}{\lambda_1^{\f{d-1}{2}} \bigcap_{\tilde{e}} L^{2}_{\tilde{e}} L^{\infty}_{t, \tilde{e}^{\perp}} }   \lambda_2^{- \f12}\norm{v_{\lambda_2}^e}{\lambda_2^{- \f12} L^{\infty}_e L^2_{t, e^{\perp}}}.
	\end{align}
	Summing over $ e \in \mathcal{M}$, the claim follows from Proposition \ref{emb-propo-2}. Secondly, we note for $   \lambda_1^2 \ll \mu $
	\begin{align}\label{iden-mod}
		Q_{\mu}(u_{\lambda_1}v_{\lambda_2})_{\lambda_2} = Q_{\mu}\big(u_{\lambda_1} \sum_{ |j| \leq 2} Q_{2^j\mu}v_{\lambda_2}\big) 
	\end{align}
	Hence we write 
	\begin{align*}
		(1 - Q_{\leq 4 \lambda_1 \lambda_2})(u_{\lambda_1}v_{\lambda_2})_{\lambda_2} = (1 - Q_{\leq 4 \lambda_1 \lambda_2})(u_{\lambda_1}(1 - Q_{\leq  \lambda_1 \lambda_2})v_{\lambda_2})_{\lambda_2}
	\end{align*}
	In order to estimate the remaining part in \eqref{deco}, using Lemma \ref{multiplierr}, it thus suffices to prove
	\begin{align}\label{nrone}
		&\norm{(u_{\lambda_1}(1 - Q_{\leq  \lambda_1 \lambda_2})v_{\lambda_2})_{\lambda_2}}{X_{\lambda_2}^{\f12, 1}} \lesssim \norm{u_{\lambda_1}}{Z_{\lambda_1}} \norm{v_{\lambda_2}}{X_{\lambda_2}^{\f12,1}}\\\label{nrtwo}
		&\norm{(u_{\lambda_1}(1 - Q_{\leq  \lambda_1 \lambda_2})v_{\lambda_2})_{\lambda_2}}{Y_{\lambda_2}} \lesssim \norm{u_{\lambda_1}}{Z_{\lambda_1}} \norm{v_{\lambda_2}}{Y_{\lambda_2}}.
	\end{align}
	The estimate \eqref{nrone} and the $ L^{\infty}_t L^2_x$ summand of \eqref{nrtwo} follow from the embedding $ Z_{\lambda_1}  \subset \lambda_1^{\f{d}{2}}L^{\infty}_{t,x}$ by factoring off the $L^{\infty}_{t,x}$ norm of $u_{\lambda_1}$. For the second estimate \eqref{nrtwo}, we further calculate
	\begin{align*}
		L(u_{\lambda_1} ( 1 - Q_{ \leq  \lambda_1 \lambda_2})v_{\lambda_2})_{\lambda_2} =&~ u_{\lambda_1} L ( 1 - Q_{\leq  \lambda_1 \lambda_2})v_{\lambda_2}+ \partial_t u_{\lambda_1} \partial_t ( 1 - Q_{\leq  \lambda_1 \lambda_2})v_{\lambda_2}\\[3pt]
		&+ \partial_t^2 u_{\lambda_1}  ( 1 - Q_{\leq  \lambda_1 \lambda_2})v_{\lambda_2}
		+ \Delta^2( u_{\lambda_1} ( 1 - Q_{\leq  \lambda_1 \lambda_2})v_{\lambda_2})\\[3pt]
		& -  u_{\lambda_1} \Delta^2( 1 - Q_{\leq  \lambda_1 \lambda_2})v_{\lambda_2}),
	\end{align*}
	hence we estimate
	\begin{align*}
		\|L(u_{\lambda_1} &( 1 - Q_{ \leq  \lambda_1 \lambda_2})v_{\lambda_2})_{\lambda_2}\|_{L^1_t L^2_x}\\
		&\lesssim \norm{u_{\lambda_1} L ( 1 - Q_{\leq  \lambda_1 \lambda_2})v_{\lambda_2}}{L^1_t L^2_x} + \norm{\partial_t u_{\lambda_1} \partial_t ( 1 - Q_{\leq  \lambda_1 \lambda_2})v_{\lambda_2}}{L^1_t L^2_x}\\
		&~~~~+ \norm{\partial_t^2 u_{\lambda_1}  ( 1 - Q_{\leq  \lambda_1 \lambda_2})v_{\lambda_2}}{L^1_t L^2_x}\\
		&~~~~ + \norm{\Delta^2( u_{\lambda_1} ( 1 - Q_{\leq  \lambda_1 \lambda_2})v_{\lambda_2}) -  u_{\lambda_1} \Delta^2( 1 - Q_{\leq  \lambda_1 \lambda_2})v_{\lambda_2})}{L^1_t L^2_x}.
	\end{align*}
	Calculating the expression in the latter norm and factoring off the derivatives of $u_{\lambda_1}$ in $ L^{\infty}$, we infer (using Bernstein's inequality) 
	\begin{align*}
		\|L(u_{\lambda_1} &( 1 - Q_{ \leq  \lambda_1 \lambda_2})v_{\lambda_2})_{\lambda_2}\|_{L^1_t L^2_x}\\
		&\lesssim \norm{u_{\lambda_1} L ( 1 - Q_{\leq  \lambda_1 \lambda_2})v_{\lambda_2}}{L^1_t L^2_x} + \lambda_1\norm{ u_{\lambda_1}}{L^{\infty}} \lambda_2^3 \norm{( 1 - Q_{\leq  \lambda_1 \lambda_2})v_{\lambda_2}}{L^1_t L^2_x}\\
		&\approx \norm{u_{\lambda_1} L ( 1 - Q_{\leq  \lambda_1 \lambda_2})v_{\lambda_2}}{L^1_t L^2_x}\\
		&~~~ + \norm{ u_{\lambda_1}}{L^{\infty}} \lambda_1 \lambda_2^3   ( \lambda_1 \lambda_2)^{-1}\norm{( 1 - Q_{\leq  \lambda_1 \lambda_2})v_{\lambda_2}}{ ( \lambda_1 \lambda_2)^{-1}L^1_t L^2_x}
	\end{align*}
	where we note $ \lambda_1 \ll \lambda_2$. We now proceed by Lemma \ref{multiplierr} $(b)$ (for $ \mu = \lambda_1 \lambda_2$)
	\begin{align*}
		\lambda_2^{-2}  \|L(u_{\lambda_1} &( 1 - Q_{ \leq  \lambda_1 \lambda_2})v_{\lambda_2})_{\lambda_2}\|_{L^1_t L^2_x}\\
		&\lesssim \lambda_1^{\f{d}{2}} \norm{ u_{\lambda_1}}{Z_{\lambda_1}} ( \lambda_2^{-2} \norm{Lv_{\lambda_2}}{L^1_t L^2_x} +  \norm{ v_{\lambda_2}}{Y_{\lambda_2}}),
	\end{align*}
	which gives the claim. The proof part $(b)$ follows similarly, in fact easier, since we can directly place $ (u_{\lambda_2} v_{\lambda_2})_{\lambda_1} \in X^{\f12, 1}_{\lambda_1}$ by estimating
	\begin{align}\label{hilfs2}
		\norm{u_{\lambda_2}v_{\lambda_2}}{L^2_{t,x}} \lesssim \lambda_2^{\f{d}{2}-1} \norm{u_{\lambda_2}}{Z_{\lambda_2}} \norm{v_{\lambda_2}}{Z_{\lambda_2}}.
	\end{align}
	Then, from $ X_{\lambda_1}^{\f12, 1} \subset Z_{\lambda_1}$, \eqref{hilfs2} gives
	\begin{align*}
		\norm{ (u_{\lambda_2}v_{\lambda_2})_{\lambda_1}}{Z_{\lambda_1}} &\lesssim 
		\sum_{\mu \leq 4  \lambda_1^2} \left(\f{\mu}{\lambda_1^2}\right)^{\f12}  \lambda_2  \norm{u_{\lambda_2}v_{\lambda_2}}{L^2_{t,x}}\\
		& \lesssim \lambda_2 ( \lambda_2^{\f{d}{2}-1} \norm{u_{\lambda_2}}{Z_{\lambda_2}} \norm{v_{\lambda_2}}{Z_{\lambda_2}}).
	\end{align*}
	For \eqref{hilfs2}, we write $ u_{\lambda_2} v_{\lambda_2} =  \sum_{e \in \mathcal{M}} u_{\lambda_2}^e v_{\lambda_2}$ where $ u_{\lambda_2}^e \in S_{\lambda_2}^e$. Hence
	\begin{align*}
		\norm{u_{\lambda_2}^e v_{\lambda_2}}{L^2_{t,x}} &\leq \norm{u^e_{\lambda_2}}{L^{\infty}_e L^{2}_{t, e^{\perp}} } \norm{v_{\lambda_2}}{L^{2}_e L^{\infty}_{t, e^{\perp}}}\\
		& \leq  \lambda_2^{-\f12} \norm{u_{\lambda_2}^e}{\lambda_2^{-\f12}L_e^{\infty} L^2_{t, e^{\perp}}} \lambda_2^{\f{d-1}{2}} \norm{v_{\lambda_2}}{\lambda_2^{\f{d-1}{2}}\bigcap_{\tilde{e}}L_{\tilde{e}}^{2} L^{\infty}_{t, \tilde{e}^{\perp}}}.
	\end{align*}
	Summing over $ e \in \mathcal{M}$, we infer the claim.
	
\end{proof}
From Lemma \ref{algebra}, we obtain \eqref{bilinear-embeddings1} as outlined above by summation according to the definiton of $Z^{\f{d}{2}} $ and $ W^{\f{d}{2}}$. Note that the estimates for the remaining frequency interactions in \eqref{yes} and \eqref{yyes} follow the same arguments provided in Lemma \ref{algebra}.
\absatz
Similarly, for the embedding \eqref{bilinear-embeddings2} we prove the subsequent estimates.
\begin{Lemma}\label{algebra-nonlinearity}
	\begin{align}\label{wfirst}
		&\norm{u_{\lambda_2}v_{\lambda_1}}{W^{\f{d}{2}}} \lesssim \lambda_1^{\f{d}{2}}\lambda_2^{\f{d}{2}} \norm{u_{\lambda_2}}{Z_{\lambda_2}} \norm{v_{\lambda_1}}{W_{\lambda_1}},~~ \lambda_1 \leq \lambda_2\\[3pt]
		\label{wsecond}
		&\norm{u_{\lambda_2}v_{\lambda_1}}{W_{\lambda_2}} \lesssim \lambda_1^{\f{d}{2}} \norm{u_{\lambda_2}}{W_{\lambda_2}} \norm{v_{\lambda_1}}{Z_{\lambda_1}},~~ \lambda_1 \ll \lambda_2
	\end{align}
\end{Lemma}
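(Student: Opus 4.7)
The plan is to mirror the strategy of Lemma \ref{algebra}, adapted to the bilinear structure $W\cdot Z\subset W$. Since $W_\lambda=\lambda^2\big(X^{-\f12,1}_\lambda+(L^1_tL^2_x)_\lambda\big)$, I would decompose the $W$-factor via the defining infimum: $v_{\lambda_1}=v^X+v^L$ in \eqref{wfirst} and $u_{\lambda_2}=u^X+u^L$ in \eqref{wsecond}, where the superscripts $X$ and $L$ refer respectively to the $\lambda^2 X^{-\f12,1}_\lambda$ and $\lambda^2(L^1_tL^2_x)_\lambda$ summands of the $W_\lambda$-splitting. The two contributions are treated separately and recombined by optimizing the infimum.

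For the $L^1_tL^2_x$-component the plan is H\"older together with Bernstein on the low-frequency factor. In \eqref{wsecond}, I would write
$$\norm{u^L v_{\lambda_1}}{L^1_tL^2_x}\leq \norm{u^L}{L^1_tL^2_x}\norm{v_{\lambda_1}}{L^\infty_{t,x}}\lesssim \lambda_1^{\f{d}{2}}\norm{u^L}{L^1_tL^2_x}\norm{v_{\lambda_1}}{Z_{\lambda_1}},$$
using $Z_{\lambda_1}\subset \lambda_1^{\f{d}{2}}L^\infty_{t,x}$ from Lemma \ref{emb-Lemma}, and then place the output in the $L^1_tL^2_x$-piece of the target $W_{\lambda_2}$-norm. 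For \eqref{wfirst} I would reverse the pairing, bounding $\norm{u_{\lambda_2} v^L}{L^1_tL^2_x}\leq \norm{u_{\lambda_2}}{L^\infty_tL^2_x}\norm{v^L}{L^1_tL^\infty_x}$ and invoking Bernstein on the low-frequency factor $v^L$ to gain $\lambda_1^{\f{d}{2}}$.

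For the $X^{-\f12,1}$-component, the plan is to adapt the modulation splitting of Lemma \ref{algebra} with threshold $\mu_0=4\lambda_1\lambda_2$, reusing the identity \eqref{iden-mod}. The high-output-modulation part ($\mu\geq\mu_0$) forces at least one input to have matching modulation, and can be placed in the $X^{-\f12,1}_{\lambda_2}$-summand of $W_{\lambda_2}$ after pairing the $X^{-\f12,1}$-factor with $L^\infty_{t,x}$ via $Z\subset L^\infty_{t,x}$. The low-output-modulation part ($\mu<\mu_0$) is handled by a bilinear Strichartz bound of the type $L^\infty_e L^2_{t,e^\perp}\cdot L^2_e L^\infty_{t,e^\perp}\subset L^2_{t,x}$ supplied by Proposition \ref{emb-propo-2}, and is then routed into the $L^1_tL^2_x$-summand of $W_{\lambda_2}$ using the $L^1_tL^2_x$-boundedness of $Q_{\leq\mu_0}P_{\lambda_2}$ from Lemma \ref{multiplierr}$(a)$. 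For \eqref{wfirst} one additionally sums over output frequencies $\lambda\lesssim\lambda_2$ in the high-high regime $\lambda_1\sim\lambda_2$, exactly as in Lemma \ref{algebra}$(b)$.

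The main technical obstacle is precisely this last routing step. In Lemma \ref{algebra} the low-modulation output fit naturally into $X^{\f12,1}_{\lambda_2}$ because $\sum_{\mu\leq\mu_0}\mu^{\f12}\lesssim\mu_0^{\f12}$ is geometric and dominated by the top modulation; here the $X^{-\f12,1}$-weight $\mu^{-\f12}$ is non-summable as $\mu\to 0$, so the analogous placement into the $X^{-\f12,1}_{\lambda_2}$-component of $W_{\lambda_2}$ is unavailable. Overcoming this essentially forces the two-component structure of $W_\lambda$: the low-modulation bilinear $L^2_{t,x}$-output must be transferred into the $L^1_tL^2_x$-summand via Lemma \ref{multiplierr}$(a)$, which is exactly why the $W_\lambda$-norm was built as a sum of these two pieces rather than just $\lambda^2X^{-\f12,1}_\lambda$.
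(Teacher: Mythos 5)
Your treatment of the $L^1_tL^2_x$-components is fine and for \eqref{wsecond} coincides with the paper's embedding $\lambda_1^{-\f{d}{2}}Z_{\lambda_1}\cdot L^1_tL^2_x\subset L^1_tL^2_x$. The genuine gap is in your handling of the $X^{-\f12,1}$-component, in two places. First, the bilinear bound $L^\infty_eL^2_{t,e^\perp}\cdot L^2_eL^\infty_{t,e^\perp}\subset L^2_{t,x}$ requires lateral Strichartz and maximal-function control of \emph{both} factors, but Proposition \ref{emb-propo-2} supplies this only for $Z_\lambda$; the $W$-factor lives in $\lambda^2X^{-\f12,1}_\lambda$, for which no $L^\infty_eL^2_{t,e^\perp}$ or $L^2_eL^\infty_{t,e^\perp}$ bound is available (nor is $X^{-\f12,1}_\lambda\subset L^\infty_{t,x}$, which your high-modulation step for \eqref{wfirst} would also need, since there the $X^{-\f12,1}$-factor is the \emph{low}-frequency one, all of whose modulations are $\leq 4\lambda_1^2<\mu_0$, so it cannot absorb the output modulation). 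Second, even granting an $L^2_{t,x}$ bound on the low-modulation output, you cannot ``route'' it into the $L^1_tL^2_x$-summand of $W_{\lambda_2}$: Lemma \ref{multiplierr}$(a)$ only asserts that $Q_{\leq\mu}P_\lambda$ is bounded \emph{on} $L^p_tL^2_x$; it does not convert an $L^2_{t,x}$ bound into an $L^1_tL^2_x$ bound, and no such global-in-time conversion exists. So the step you yourself flag as the main obstacle is in fact not resolved by your argument.

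The paper's mechanism for producing the $L^1_t$ norm is H\"older in time $L^2_t\times L^2_t\to L^1_t$, and no lateral spaces enter this lemma at all. For \eqref{wfirst} it uses the crude embedding $W_{\lambda_1}\subset\lambda_1^{3}L^2_{t,x}$ from Lemma \ref{emb-Lemma} (valid for \emph{both} summands of $W_{\lambda_1}$, so no splitting of $v_{\lambda_1}$ and no modulation analysis are needed), paired with $\norm{u_{\lambda_2}}{L^2_tL^{\f{2d}{d-2}}_x}\lesssim\norm{u_{\lambda_2}}{S_{\lambda_2}}$ and Bernstein on $v_{\lambda_1}$. For \eqref{wsecond} it splits $u_{\lambda_2}$ at modulation $C^2\lambda_1^2$, not $\lambda_1\lambda_2$ (with your threshold the low part loses a factor $(\lambda_2/\lambda_1)^{\f12}$): the high part is handled as you propose, via \eqref{iden-mod} and $v_{\lambda_1}\in\lambda_1^{\f{d}{2}}L^\infty_{t,x}$, while for the low part the $X^{-\f12,1}$-structure supplies the quantitative gain $\norm{Q_{\leq C^2\lambda_1^2}u_{\lambda_2}}{L^2_{t,x}}\lesssim\lambda_1\norm{u_{\lambda_2}}{X^{-\f12,1}_{\lambda_2}}$, which is paired with $\norm{v_{\lambda_1}}{L^2_tL^\infty_x}\lesssim\lambda_1^{\f{d-2}{2}}\norm{v_{\lambda_1}}{Z_{\lambda_1}}$ by H\"older in time to land directly in $L^1_tL^2_x$. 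You should replace your lateral-space/routing step by these two H\"older-in-time arguments.
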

~~\\
\begin{comment}
\begin{Lemma}\label{algebra-nonlinearity2}
\begin{align}\label{wfirst2}
&\norm{u_{\lambda_2}v_{\lambda_1}}{W_{\lambda_2}} \lesssim \lambda_1^{\f{n}{2}} \norm{u_{\lambda_2}}{Z_{\lambda_2}} \norm{v_{\lambda_1}}{W_{\lambda_1}},~~ \lambda_1 \ll \lambda_2\\[5pt]
\label{wsecond2}
&\norm{u_{\lambda_2}v_{\lambda_1}}{W_{\lambda_2}} \lesssim \lambda_1^{\f{n}{2}} \norm{u_{\lambda_2}}{W_{\lambda_2}} \norm{v_{\lambda_1}}{Z_{\lambda_1}},~~ \lambda_1 \ll \lambda_2\\[5pt]
\label{wthird}
&\norm{(u_{\lambda_2}v_{\lambda_2})_{\lambda_1}}{W_{\lambda_1}} \lesssim \lambda_2^{\f{n}{2}} \norm{u_{\lambda_2}}{W_{\lambda_2}} \norm{v_{\lambda_2}}{Z_{\lambda_2}},~~\lambda_1 \leq \lambda_2. 
\end{align}
\end{Lemma}
\end{comment}
\begin{proof}[Proof]
	We first estimate by Sobolev embedding
	\begin{align*}
		\lambda_2^{-2}\norm{u_{\lambda_2}v_{\lambda_1}}{L^1_t L^2_x}&\lesssim \lambda_2^{-2}\norm{u_{\lambda_2}}{L^2_t L^{\f{2d}{d-2}}_x}  \norm{v_{\lambda_1}}{L^2_t L^{d}_x}\\
		& \lesssim  \lambda_2^{-2}\norm{u_{\lambda_2}}{L^2_t L^{\f{2d}{d-2}}_x} \lambda_1^{\f{d-2}{2}} \norm{v_{\lambda_1}}{L^2_{t,x}}\\
		& \lesssim \norm{u_{\lambda_2}}{S_{\lambda_2}} \lambda_1^{\f{d}{2}-3}\norm{v_{\lambda_1}}{L^2_{t,x}}\\
		& \lesssim \norm{u_{\lambda_2}}{Z_{\lambda_2}} \lambda_1^{\f{d}{2}}\norm{v_{\lambda_1}}{W_{\lambda_1}}
	\end{align*}
	where we used Lemma \ref{emb-Lemma} for $ W_{\lambda_1} \subset \lambda_1^{3}L^2_{t,x}$.
	\begin{comment}
	\begin{align*}
	\lambda_2^{-\f{5}{2}}\norm{u_{\lambda_2}^ev_{\lambda_1}}{L^1_e L^2_{t, e^{\perp}}}&\lesssim \lambda_2^{-\f{5}{2}}\norm{u_{\lambda_2}^e}{L^2_e L^{\f{2(n+1)}{n-2}}_{t, e^{\perp}}}  \norm{v_{\lambda_1}}{L^2_e L^{\f{2(n+1)}{3}}_{t, e^{\perp}}}\\
	& \lesssim \lambda_2^{-2}\norm{u_{\lambda_2}^e}{S_{\lambda_2}^e} \lambda_1^{\f{n-2}{2}}\norm{v_{\lambda_1}}{L^2_{t,x}}\\
	& \lesssim \norm{u_{\lambda_2}^e}{S_{\lambda_2}^e} \lambda_1^{\f{n}{2}}\norm{v_{\lambda_1}}{W_{\lambda_1}},
	\end{align*}
	where we used Lemma ref? for $ W_{\lambda_1} \subset \lambda_1^{3}L^2_{t,x}$ and the Sobolev embeddings $ \dot{H}^{\f{k(n-2)}{2(n+1)}}(\R^k) \subset L^{\f{2(n+1)}{3}}(\R^k),$ which we apply for $ k = 1$ and $ k = n-1 $ (note that $  v_{\lambda_1} $ is localized in an interval of length $ \lambda_1^2$ wrt $\tau$).
	\end{comment}
	Thus from
	\begin{align*}
		\norm{u_{\lambda_2}^e v_{\lambda_1}}{W^{\f{d}{2}}} \lesssim  \lambda_2^{\f{d-4}{2}}\norm{u_{\lambda_2}v_{\lambda_1}}{L^1_t L^2_{x}} \lesssim  \lambda_2^{\f{d}{2}} \lambda^{\f{d}{2}}_1 \norm{u_{\lambda_2}}{Z_{\lambda_2}} \norm{v_{\lambda_1}}{W_{\lambda_1}},
	\end{align*}
	we obtain the claim \eqref{wfirst}. Estimate \eqref{wsecond} is implied by
	\begin{align}
		\lambda_1^{- \f{d}{2}}Z_{\lambda_1} \cdot L_t^1 L_{x}^2 &\subset  L_t^1 L_{x}^2,\\ \label{secccond}
		\lambda_1^{- \f{d}{2}}Z_{\lambda_1} \cdot X^{- \f{1}{2},1}_{\lambda_2} &\subset  W_{\lambda_2}\lambda_2^{-2},
	\end{align} 
	where the first embedding follows from $ Z_{\lambda_1} \subset \lambda_1^{ \f{d}{2}} L^{\infty}_{t,x}$. For \eqref{secccond}, we note that  since we restrict to $ \lambda_1 \ll \lambda_2 $, we only consider  $ \lambda_1 \leq \f{\lambda_2}{C} $   for a large, fixed  constant $ C > 0 $. We thus decompose
	$$ u_{\lambda_2} = Q_{\leq C^2 \lambda_1^2}  u_{\lambda_2} +  (1 - Q_{\leq C^2 \lambda_1^2}) u_{\lambda_2}.$$
	In particular, each dyadic piece  $Q_{\mu} u_{\lambda_2}$ in $ (1 - Q_{\leq C^2 \lambda_1^2})X^{- \f12, 1}_{\lambda_2} $ satisfies
	$\lambda_1^2 \ll \mu \leq \lambda_2^2.$ We then estimate (note that we use \eqref{iden-mod})
	\begin{align*}
		\norm{v_{\lambda_1}(1 - Q_{\leq C^2 \lambda_1^2}) u_{\lambda_2} }{X^{-\f12,1}_{\lambda_2}} &\sim  \sum_{C^2 \lambda_1^2 \leq \mu \leq 4 \lambda_2^2}\mu^{-\f12} \norm{v_{\lambda_1} Q_{\mu} u_{\lambda_2}}{L^2_{t, x}}\\
		&\lesssim   \sum_{C^2 \lambda_1^2 \leq \mu \leq 4 \lambda_2^2}\mu^{-\f12} \norm{v_{\lambda_1}}{L^{\infty}_{t,x}}\norm{ Q_{\mu} u_{\lambda_2}}{L^2_{t, x}}\\
		& \lesssim \lambda_1^{\f{d}{2}} \norm{v_{\lambda_1}}{Z_{\lambda_1}}\norm{  u_{\lambda_2} }{X^{-\f12,1}_{\lambda_2}}.
	\end{align*}
	Further 
	\begin{align*}
		\lambda_2^2\norm{v_{\lambda_1} Q_{\leq C^2 \lambda_1^2}u_{\lambda_2}}{W_{\lambda_2}} &\lesssim  \norm{ v_{\lambda_1}Q_{\leq C^2 \lambda_1^2}u_{\lambda_2} }{L^1_t L^{2}_{x}}\\
		&\lesssim \norm{ v_{\lambda_1} }{L^2_t L^{\infty}_{x}}  \norm{Q_{\leq C^2 \lambda_1^2}u_{\lambda_2}}{L^2_{t,x}}\\
		&\lesssim  \lambda_1^{\f{d}{2}} \norm{ v_{\lambda_1} }{\lambda_1^{\f{d-2}{2}}L^2_t L^{\infty}_{x}}\sum_{\mu \leq C^2 \lambda_1^2} \mu^{- \f12} \norm{Q_{\mu}u_{\lambda_2}}{L^2_{t,x}}\\
		&\lesssim \lambda_1^{\f{d}{2}} \norm{v_{\lambda_1}}{Z_{\lambda_1}} \norm{u_{\lambda_2}}{X^{- \f12,1}_{\lambda_2}},
	\end{align*}
	which follows from $ Z_{\lambda_1} \subset S_{\lambda_1}$.
\end{proof}
~~\\
% \[B_1 \cdot B_3' \subset B_2' ~~\Rightarrow~~ B_1 \cdot B_2 \subset B_3, \]
%for Banach spaces $B_1, B_2, B_3$ where $B_1 \cdot B_3'$ and $ B_1 \cdot B_2 $ are welldefined.
%\textcolor{red}{Why?? Why are these spaces reflixive in my case? don't get it.}
%\subsection{Estimates for $\mathcal{N}(u)$}
We now infer \eqref{bilinear-embeddings1} and \eqref{bilinear-embeddings2} by the summation argument provided in the beginning of the section.
\subsection{Higher regularity}
The percisteny of higher  regularity of the $  \dot{B}^{2,1}_{\f{d}{2}}\times \dot{B}^{2,1}_{\f{d}{2}-2} $ solution as stated in Theorem \ref{main1}  follows as in \cite{tataru1} and \cite{bejenaru1} from  \eqref{higher-regularity1} and \eqref{higher-regularity2}. We will briefly outline how to employ these estimates for the proof of Theorem \ref{main1} and Corollary \ref{main2} in the next Section \ref{sec:proof}.
\absatz
For \eqref{higher-regularity1}, we rely again on Lemma \ref{algebra}  and  the decomposition 
$$  uv = \sum_{\lambda_1 \ll \lambda_2} u_{\lambda_2}v_{\lambda_1} + \sum_{\lambda_2 \ll \lambda_1} u_{\lambda_2}v_{\lambda_1}  + \sum_{\lambda_1 \sim \lambda_2} u_{\lambda_2}v_{\lambda_1},$$
from the beginning of the Section \ref{sec:multi}. However, we now sum as follows 
\begin{align*}
	\bigg(\sum_{\lambda} \lambda^{2s} \norm{(uv)_{\lambda}}{Z_{\lambda}}^2 \bigg)^{\f12} \lesssim& ~ \sum_{\lambda_1} \bigg( \sum_{\lambda} \lambda^{2s} \bigg \| \bigg(\sum_{\lambda_1 \ll \lambda_2} u_{\lambda_2} v_{\lambda_1}\bigg)_{\lambda}\bigg \|_{Z_{\lambda}}^2 \bigg)^{\f12}\\
	& +  \sum_{\lambda_2} \bigg( \sum_{\lambda} \lambda^{2s} \bigg \|  \bigg(\sum_{\lambda_2 \ll \lambda_1} u_{\lambda_2} v_{\lambda_1}\bigg)_{\lambda} \bigg \|_{Z_{\lambda}}^2 \bigg)^{\f12} + \sum_{\lambda_1 \sim  \lambda_2} \norm{u_{\lambda_2} v_{\lambda_1}}{Z^s}.
\end{align*}
Hence, we need to estimate the three terms
\begin{align*}
	\sum_{\lambda_1} \bigg( \sum_{\lambda_1 \ll \lambda } \lambda^{2s}  \| (u_{\lambda} v_{\lambda_1})_{\lambda}\|_{Z_{\lambda}}^2 \bigg)^{\f12},~~~ \sum_{\lambda_2} \bigg( \sum_{\lambda_2 \ll \lambda } \lambda^{2s}  \| (u_{\lambda_2} v_{\lambda})_{\lambda}\|_{Z_{\lambda}}^2 \bigg)^{\f12},~~~\sum_{\lambda_1 \sim  \lambda_2} \norm{u_{\lambda_2} v_{\lambda_1}}{Z^s},
\end{align*}
where for $ s > \f{d}{2}$, the latter sum is treated by Lemma \ref{algebra} $(b)$ similar as before via (note that we identify $\lambda_1$ and $ \lambda_2$ for simplicity)
$$ \sum_{\lambda_2} \sum_{\lambda \lesssim \lambda_2} \lambda^s \norm{( u_{\lambda_2} v_{\lambda_2})_{\lambda}}{Z_{\lambda}} \lesssim \sum_{\lambda_2} ( \lambda_2^{2s} \norm{u_{\lambda_2}}{Z_{\lambda_2}}^2)^{\f12}  \lambda_2^{\f{d}{2}} \norm{v_{\lambda_2}}{Z_{\lambda_2}} \lesssim \norm{u}{Z^s} \norm{v}{Z^{\f{d}{2}}}. $$
The LHS of this inequality now bounds the $ l^2(\Z)$ norm (wrt $ \lambda$) and for the first two sums above we directly estimate the squares via Lemma \ref{algebra} $(a)$. For \eqref{higher-regularity2}, we sum in the same way and use the following dyadic estimates
\begin{align*}
	&\sum_{\lambda_1 \lesssim \lambda_2} \lambda_1^s \norm{(u_{\lambda_2}v_{\lambda_2})_{\lambda_1}}{W_{\lambda_1}} \lesssim \lambda_2^{s + \f{d}{2}} \norm{u_{\lambda_2}}{Z_{\lambda_2}} \norm{v_{\lambda_2}}{W_{\lambda_2}},\\[5pt]
	&\norm{(u_{\lambda_2}v_{\lambda_1})_{\lambda_2}}{W_{\lambda_2}} \lesssim \lambda_1^{\f{d}{2}} \norm{u_{\lambda_2}}{W_{\lambda_2}} \norm{v_{\lambda_1}}{Z_{\lambda_1}},~~ \lambda_1 \ll \lambda_2\\[5pt]
	&\norm{(u_{\lambda_2}v_{\lambda_1})_{\lambda_2}}{W_{\lambda_2}} \lesssim \lambda_1^{\f{d}{2}} \norm{u_{\lambda_2}}{Z_{\lambda_2}} \norm{v_{\lambda_1}}{W_{\lambda_1}},~~ \lambda_1 \ll \lambda_2
\end{align*}
which are the same as in (or follow from) Lemma \ref{algebra-nonlinearity}.
	\section{Proof of the main theorem}\label{sec:proof}
	The proof of Theorem \ref{main1} follows straight forward perturbatively by convergence of 
	\begin{align}\label{seq}
		u_{k+1} = Su[0] + V( \mathcal{Q}(u_k)),~~ k \geq 0,~ u_0(t,x) = 0
	\end{align}
	in the space $ Z^{\f{d}{2}}$, where $ Su[0]  = S (u_0,u_1)$ solves \eqref{equation-linear} with $ F = 0 $ and $ V F $ solves \eqref{equation-linear} with vanishing initial data. To be more precise, we combine Lemma \ref{Linea-propo}, i.e.
	\begin{align}
		&\norm{u_{k+1}}{Z^{\f{d}{2}}} \lesssim \norm{u_0}{\dot{B}^{2,1}_{\f{d}{2}}} + \norm{u_1}{\dot{B}^{2,1}_{\f{d}{2}-2}}
		+ \norm{\mathcal{Q}(u_k)}{W^{\f{d}{2}}}\\[3pt]
		&\sup_{t \in \R} \big(\norm{u(t)}{\dot{B}^{2,1}_{\f{d}{2}}} + \norm{\partial_tu(t)}{\dot{B}^{2,1}_{\f{d}{2}-2}}\big) \lesssim \norm{u}{Z^{\f{d}{2}}},
	\end{align}
	with the Lipschitz estimate 
	\begin{align}\label{Lip}
		\norm{\mathcal{Q}(u_{k}) - \mathcal{Q}(v_{k})}{W^{\frac{d}{2}}} \lesssim C(\norm{u_{k}}{Z^{\f{d}{2}}},\norm{v_{k}}{Z^{\f{d}{2}}} )\norm{u_k - v_k }{Z^{\f{d}{2}}}.
	\end{align}
	This is a direct consequence of  \eqref{bilinear-embeddings1},~ \eqref{bilinear-embeddings2} combined with the identity \eqref{null-structure}, i.e.
$$	\mathcal{Q}(u) = \f12Q_u( L( u \cdot u) -  u \cdot Lu - Lu \cdot u)$$
	 and Lemma \ref{Linea-propo} provided  $\mathcal{Q} $ is analytic (at $x_0 = 0$), $ \delta > 0 $ is small enough and \eqref{small} holds. This is necessary to expand the coefficients of $ \mathcal{Q}$,  which then converge uniformely near $ x_0 = 0$ hence in $ Z^{\frac{d}{2}}$. Especially, for $ \delta > 0 $ sufficiently small \eqref{seq} converges to a solution of \eqref{general2} in the $ \delta$-ball of $ Z^{\f{d}{2}}$ centered at $u = 0$. For higher regularity, we proceed as in \cite{tataru1} and construct a solution for \eqref{general2} in the space  $ Z^{\f{d}{2}}\cap Z^s $ with norm
	$$ \norm{u}{Z^{\f{d}{2}}\cap Z^s} = \f{1}{M}\norm{u}{ Z^s} +\f{1}{\tilde{\delta}} \norm{u}{Z^{\f{d}{2}}},$$
	where aditionally $ (u_0, u_1) \in \dot{H}^s(\R^d) \times \dot{H}^{s-2} (\R^d) $ for some $ s > \f{d}{2}$.
	Then, provided $  \tilde{\delta } < \delta $, the estimates \eqref{higher-regularity1} and \eqref{higher-regularity2} imply
	a Lipschitz estimate similar to \eqref{Lip} for  $W^s $ on the LHS and $ Z^{\f{n}{2}}\cap Z^s $ on the RHS. More precisely, there holds
	\begin{align*}
		\norm{\mathcal{Q}(u) - \mathcal{Q}(v)}{W^s} \lesssim \norm{u - v}{Z^s} (\norm{u}{Z^{\f{d}{2}}} + \norm{v}{Z^{\f{d}{2}}}) + \norm{u - v}{Z^{\f{d}{2}}}(\norm{u}{Z^s} + \norm{v}{Z^s}).
	\end{align*}
	Especially, with the corresponding linear estimates as above, \eqref{seq} converges in the unit ball of $  Z^{\f{d}{2}}\cap Z^s $ (in the above norm), where we take 
	$$ M \sim  \norm{u_0}{\dot{H}^s(\R^d)} + \norm{u_1}{\dot{H}^{s-2}(\R^d)}.$$
	In order to obtain the Lipschitz estimate and the fact that the fixed point operator maps the unit ball into itself, we note that from \eqref{higher-regularity1} (combined with \eqref{bilinear-embeddings1}) there holds by induction over $ k \in \N $ for $ u,v \in Z^{\f{d}{2}}\cap Z^s$
	\begin{align*}
		\norm{u^k}{Z^s} &\lesssim k \norm{u}{Z^{\f{d}{2}}}^{k-1} \norm{u}{Z^s}, \\[3pt]
		\norm{(u-v)u^{k-1}}{Z^s} &\lesssim \norm{u-v}{Z^s} \norm{u}{Z^{\f{d}{2}}}^{k-1} + (k-1)\norm{u}{Z^{\f{d}{2}}}^{k-2} \norm{u-v}{Z^{\f{d}{2}}} \norm{u}{Z^s}.
	\end{align*}
	In particular, the smallness assumption is only necessary in $ Z^{\f{d}{2}} $ in order to estimate the series expansion of $\mathcal{Q}(u),~\mathcal{Q}(v)$ in \eqref{general}. Thus from \eqref{higher-regularity2} and \eqref{bilinear-embeddings2} we infer
	\begin{align*}
		&\norm{V( \mathcal{Q}(u))}{Z^{s}} \lesssim \norm{u}{Z^s}\norm{u}{Z^{\f{d}{2}}} \lesssim \tilde{\delta} \norm{u}{Z^{s}},\\[3pt]
		&\norm{V( \mathcal{Q}(u))}{Z^{\f{d}{2}}} \lesssim \norm{u^2}{Z^{\f{d}{2}}} \lesssim \tilde{\delta} \norm{u}{Z^{\f{d}{2}}}.
	\end{align*}
	Similarly, for the difference $ u-v $ of $ u,v \in Z^s \cap Z^{\f{d}{2}}$, we infer the Lipschitz estimate.
	Since $ \tilde{\delta} < \delta$, any such solution also lies in the $\delta$-ball in $Z^{\f{d}{2}} $ and thus coincides with the solution in this space.
	\\[10pt]
	\begin{comment}
	Missing: Argument for the local solutions when $ \mathcal{Q}$ is polynomial directly  in $ Z^{\f{n}{2}}\cap Z^s $ but now with the usual norm and \eqref{seq} replaced by 
	$$ u_k =\phi_T(Su[0] + V \mathcal{Q}(u_k)),$$
	where $\phi_T$ is a smooth cut off function, constantly one on $ (-T/2, T/2)$. Include the corresponding linear estimates with the cut off in the function space section.\\[5pt]
	Shortly mention how the local wellposedness in $ Z^s \cap Z^{\f{n}{2}}$ looks like and the local existence time $T > 0 $ depends on the size of $ (u_0, u_1 )$ in the $B \cap H $ norm as usual.\\[10pt]
	\end{comment}
	The second problem \eqref{bihom} is treated similarly, i.e. we expand 
	\begin{align}\label{sums}
	\Pi(x) = \sum_{k = 0}^{\infty}  \frac{1}{k !} d^k\Pi(x)_{|_{x = 0}} ( x^{k}),
	\end{align}
	where $ d^k \Pi(x)$ are $ k $-tensors with the notation for $l = 1, \dots ,L$.  Especially we have for any $ v \in \R^L $ 
	\begin{align}\label{ja-for-pi}
		d\Pi_x(v) =& \sum_{k = 1}^{\infty}  \frac{1}{(k-1) !} \sum_{l=1}^Ld^{k-1}\partial_{x_l}\Pi(x)_{|_{x = 0}} ( x^{k-1}) v_l\\[3pt]\notag
		=& \sum_{k = 1}^{\infty}  \frac{1}{k !} \sum_{l=1}^Ld^{k-1}\partial_{x_l}\Pi(x)_{|_{x = 0}} ( x^{k-1}) k v_l.
	\end{align}
	Since now consider $	\mathcal{N}(u) =L( \Pi(u)) -  d \Pi_u(Lu) $
we  note that by continuity of $ L : Z^{\f{d}{2}} \to W^{\f{d}{2}}$, i.e. 
$$ \norm{Lv}{W^{\f{d}{2}}} \lesssim \norm{v}{Z^{\f{d}{2}}} \lesssim \delta,$$
and by convergence of the series in  $B^{Z^{\f{d}{2}}}(0, \delta )$, we justify to pull $L$ into the series expansion and all terms in the series expression of $ \mathcal{N}(u)$ are at least quadratic. More precisely 
$$ \mathcal{N}(u) = \sum_{k \geq 2} \frac{1}{k !}  (d^k\Pi(x))_{|_{x = 0}} ( L (u^k) - k u^{k-1} Lu ), $$
converges absolutely in $W^{\f{d}{2}}$ if $ u \in  B^{Z^{\f{d}{2}}}(0, \delta )$ and $\delta > 0 $ is small enough.  Similarly for the difference we have
$$ \mathcal{N}(u) - \mathcal{N}(v) = \sum_{k \geq 2} \sum_{l = 0}^{k-1}\frac{1}{k !}  (d^k\Pi(x))_{|_{x = 0}} ( L (v^l w u^{k-1-l}) - k v^{l} w u^{k-2 -l} Lu - k v^{k-1} Lw ),$$
where for the middle term, we only sum $l = 0, \dots, k-2$. In this notation e.g. $ (d^k\Pi(x))_{|_{x = 0}} (v^l w u^{k-l-1})$ captures all terms of the form 
\begin{align*}
	\sum_{\substack{l_1 + \dots l_{m} = l\\ l_{m+2} \dots + l_L = k-1-l}}C_{l_1,\dots, l_{L}} (\partial_{x_{i_1}}^{l_1} \cdots \partial_{x_{i_L}}^{l_L}\Pi(0)) v_{i_1}^{l_1} \cdots v_{i_m}^{l_{m}}w_{i_{m+1}} u_{i_{m+2}}^{l_{m+2}}\cdots u_{i_L}^{l_{L}},~~~i_j \in \{1, \dots,L\}.
\end{align*} 
Then the argument above applies and we now want to construct a global solution of \eqref{jap}, which reads as
	\begin{align*}%\label{expansion4}
		\partial_t^2u + \Delta^2u &= dP_u(u_t,u_t) + dP_u(\Delta u,\Delta u) + 4 dP_u( \nabla u, \nabla \Delta u) + 2 dP_u(\nabla^2u, \nabla^2 u)\\ \nonumber
		&\quad+2 d^2P_u(\nabla u, \nabla u, \Delta u)  + 4 d^2P_u( \nabla u, \nabla u, \nabla^2 u )\\ \nonumber
		&\quad+ d^3P_u(\nabla u, \nabla u, \nabla u, \nabla u),
	\end{align*}
	where 
	\begin{align*}
		d^2P_u( \nabla u, \nabla u, \nabla^2 u ) &= d^2P_u(\partial^i u, \partial_ju, \partial_i \partial^j u),\\[3pt]
		d^3P_u(\nabla u, \nabla u, \nabla u, \nabla u) &= d^3P_u(\partial_i u, \partial^i u, \partial_ju, \partial^j u),
	\end{align*}
	and $ dP_u,~d^2P_u,~d^3P_u$ are derivatives of the orthogonal tangent projector $ P_p : \R^L \to T_pN $ for $ p \in N $.
	We extend this equation via $\Pi$ ($d\Pi_u = P_u$ for $u \in N$) to functions that only map to the neighborhood $ \mathcal{V}_{\varepsilon}(N)$. By direct calculation or comparison to \eqref{jap}, this can be verified for  \eqref{bihom} and hence we solve
	$$ Lv = L(\Pi(v+p)) - d\Pi_{v+p}(Lv),$$
	for $v = u - p$ where $  p: = \lim_{|x|  \to \infty} u_0(x) $ via the $Z^{\f{d}{2}}\cap Z^s$-limit of
	\begin{align}\label{seq2}
		v_{k+1} = Sv[0] + V( \mathcal{N}(v_k)),~~ k \geq 0,~ v_0(t,x) = 0,~~ \mathcal{N}(v) = L(\Pi(v)) - d \Pi_v(Lv).
	\end{align}
	In particular, the smoothness of the solution follows from the persistence of higher regularity in the fixed point argument from above. Since for $ \delta > 0 $ small enough, we obtain (note that in  $\dot{B}^{2,1}_{\f{d}{2}} $ we have $ C_0$ data)
	\begin{align}\label{distas}
		\sup_{t \in \R} \dist(u, N) \leq  \norm{u-p}{L^{\infty}_{t,x}} \lesssim  \norm{v}{L^{\infty}_t B^{2,1}_{\f{d}{2}}} \lesssim  \norm{v}{Z^{\f{n}{2}}} \lesssim \delta,
	\end{align}
	the map $ \Pi $ and thus \eqref{bihom} is welldefined in a $B(0, C\delta)$ ball in $Z^{\f{d}{2}}$. The only thing left to show is that $ u(t) \in N$ for $t \in \R$, such that in particular, \eqref{bihom} implies  \eqref{jap}.
	\absatz
	If $ v = u-p \in B(0, C \delta) \subset Z^{\f{d}{2}}$, then $ \Pi(u) - p = \Pi(v + p) - p \in Z^{\f{d}{2}}$ and $ \Pi(u) - u = \Pi(v+p) - p - v \in Z^{\f{d}{2}}$ with
	\begin{align*}
		\norm{\Pi(u) - u}{Z^{\f{d}{2}}}  + \norm{\Pi(u) - p}{Z^{\f{d}{2}}} \lesssim \norm{v}{Z^{\f{d}{2}}},
	\end{align*} 
	provided $ \delta > 0 $ is small. We now have
	\begin{align}
		L( u - \Pi(u)) = L(v - \Pi(v+p)) = - d\Pi_{v + p}(Lv)  = -d\Pi_{v + p}(\mathcal{N}(v)).
	\end{align}
	 Since $ \Pi(u) \in N $, we have $ \mathcal{N}( \Pi(u) - p) \perp T_{\Pi(u)}N $ and from $ Im(d \Pi_u) \subset T_{\Pi(u)} N $, $u = v + p$, we obtain 
	$$ d\Pi_{v + p}(\mathcal{N}( \Pi(u)-p)) = 0.$$ 
	At this point, however, we mention, that we cancel the linear part in  series expansions for $ L(\Pi(v + p))$ in $\mathcal{N}(v)$ and of $ L(\Pi(u) - p) = L(\Pi(v + p) - p)$ in $\mathcal{N}(\Pi(u) - p) $ at $ v = 0$ (Thus the constant part of $d\Pi$ vansihes in $\mathcal{N}$). We then obtain
	\begin{align}
		L( u - \Pi(u)) = -d\Pi_{v + p}\big(& (d\Pi_{(\Pi(u) -p) +p} - d\Pi_{v + p}) L( \Pi(u) - p)\\[2pt] \nonumber
		&+ d\Pi_{v +p}( L( \Pi(u) - p - v))\\[2pt] \nonumber
		&+ L( \Pi(v + p) - \Pi((\Pi(u) - p) + p))\big).
	\end{align}
	Note that we don't want to use $ \Pi^2= \Pi$, since technically we want the identity for the series expressions for $\Pi,~ d \Pi $ with missing linear parts. Especially, all terms appearing on the RHS are at least quadratic.\\[4pt]
	This implies (note that $u(0) = \Pi u(0),~ u_t(0) = \partial_t(\Pi u)(0)$ by assumption)
	\begin{align*}
		\norm{u - \Pi(u)}{Z^{\f{d}{2}}} &\lesssim~( 1 + \norm{v}{Z^{\f{d}{2}}})\norm{ (d\Pi_{(\Pi(u) -p) +p} - d\Pi_{v + p}) L( \Pi(u) - p)}{W^{\f{d}{2}}}\\
		&~~~~+ ( 1 + \norm{v}{Z^{\f{d}{2}}})\norm{d\Pi_{v +p}( L( \Pi(u) - p - v)) }{W^{\f{d}{2}}}\\
		&~~~~+ ( 1 + \norm{v}{Z^{\f{d}{2}}})\norm{ L( \Pi(v + p) - \Pi((\Pi(u) - p) + p))}{W^{\f{d}{2}}}\\[4pt]
		&\lesssim ( 1 + \norm{v}{Z^{\f{d}{2}}})\norm{u - \Pi(u)}{Z^{\f{d}{2}}} \norm{L(\Pi(u) - p)}{W^{\f{d}{2}}}\\
		&~~~+ ( 1 + \norm{v}{Z^{\f{d}{2}}})\norm{v}{Z^{\f{d}{2}}} \norm{L(\Pi(u) - p - v)}{W^{\f{d}{2}}}\\
		&~~~+ ( 1 + \norm{v}{Z^{\f{d}{2}}})(\norm{v}{Z^{\f{d}{2}}} + \norm{\Pi(u) - p}{Z^{\f{d}{2}}}) \norm{u - \Pi(u)}{Z^{\f{d}{2}}}\\[4pt]
		&\lesssim ( 1 + \norm{v}{Z^{\f{d}{2}}}) \norm{v}{Z^{\f{d}{2}}} \norm{u - \Pi(u)}{Z^{\f{d}{2}}}.
	\end{align*}
	In particular, if $ \norm{v}{Z^{\f{d}{2}}}  \leq \delta$ is sufficiently small, we have $ u = \Pi(u) \in N $.

	\appendix
	
	\section{Local Smoothing  \& lateral Strichartz inequalities}\label{appendix}
	~~\\
	In this section, we recall  the \emph{local smoothing} effect (i.e. lateral Strichartz estimates with localized data) and a \emph{maximal function estimate} for the linear Cauchy problem
	\begin{align}\label{Schro}
		\begin{cases}
			i \partial_t u (t,x) \pm \Delta u(t,x) = f(t,x) & (t,x) \in \R \times \R^d\\[2pt]
			u(0,x) = u_0(x)& x \in \R^d
		\end{cases}
	\end{align}
	in the  lateral space $L^p_e L^q_{t, e^{\perp}} $ for $ e \in \mathbb{S}^{d-1}$  with norm
	\begin{align}\label{resolution}
		\norm{f}{L^p_e L^q_{t, e^{\perp}}}^p =  \int_{-\infty}^{\infty} \left( \int_{[e]^{\perp}} \int_{- \infty}^{\infty}  | f( t, r e + x)|^q d t~d x\right)^{\f{p}{q}}d r.
	\end{align}
	The norm \eqref{resolution} was used by Kenig, Ponce, Vega, see e.g. \cite{KPV}, in order to establish local smoothing estimates for nonlinear Schr\"odinger equations.\\[3pt]
	The estimates for $L^p_e L^q_{t, e^{\perp}},~ L^1_e L^2_{t, e^{\perp}},~~L^2_e L^{\infty}_{t, e^{\perp}} $ in Corollary \ref{Corol-Strichartz} and Lemma \ref{maxfunc} below are substantial in the wellposedness theory of Schr\"odinger maps and were proven by Ionescu, Kenig in \cite{ionescu-kenig1},\cite{ionescu-kenig2} (see also the work of Bejenaru in \cite{bejenaru1} and Bejenaru, Ionescu, Kenig in \cite{bejenaru2}).\\[3pt]
	Similar ideas (however more involved due to the absence of the $L^2_e L^{\infty}_{t, e^{\perp}}$ estimate in $ d = 2$) have been used by Bejenaru, Ionescu, Kenig and Tataru in \cite{bejenaru3} for  global Schr\"odinger maps into $ \mathbb{S}^2$ in dimension $ d \geq 2$ with small initial data in $H^{\f{d}{2}}$.
	%Here pure $X^{s,b}$-methods in the frequency domain do not seem to be sufficient for all frequency interactions and need to be combined with improved Strichartz estimates on the physical side.
	\absatz
	Here we follow Bejenaru's calculation in \cite{bejenaru1}, which recovers the smoothing effect for \eqref{Schro} provided the data $ u_0, f$ is sufficiently localized in the sets
	\begin{align*}
		A_e &= \big\{ \xi~|~ \xi \cdot e \geq \f{|\xi|}{\sqrt{2}}\big\},\\
		B^{\pm}_e  &= \left \{ (\tau,\xi)~|~ | \pm \tau - \xi^2| \leq \f{|\tau| + \xi^2}{10},~~\xi \in A_e \right \}\\[4pt]
		A_{\lambda} &= ~\{ (\tau, \xi) ~|~ \lambda/2 \leq (\tau^2 + |\xi|^4)^{\f14} \leq 2 \lambda \},
	\end{align*}
	as defined in Section \ref{sec:Linear-est-func}. Especially for $ (\tau, \xi) \in B_e^{\pm} \cap A_{\lambda}$, there holds
	\begin{align}\label{facts2}
		\pm \tau - \xi_{e^{\perp}}^2 \geq 0,~~~~ \xi_e \sim \lambda,~~~~\xi_e + \sqrt{ \pm \tau - \xi_{e^{\perp}}^2} \sim \lambda.
	\end{align} 
	\begin{Rem}
		We note that our definition of $B_e^{\pm}$ slightly differs from \cite{bejenaru1}.
	\end{Rem}
	Taking the FT (in $t,x$) of \eqref{Schro}, with $ u$ being localized in  $ B^{\pm}_e$,
	\begin{align}
		\hat{f}(\tau, \xi) = (\tau \mp |\xi|^2) \hat{u}(\tau, \xi) = \pm  \left(\sqrt{\pm \tau - \xi_{e^{\perp}}^2} - \xi_e \right)\left(\sqrt{\pm \tau - \xi_{e^{\perp}}^2} + \xi_e \right)\hat{u}(\tau, \xi).
	\end{align}
	Hence, considering \eqref{facts2}, we proceed by taking the (inv.) FT in the coordinates $ t, x_{e^{\perp}}$,
	\begin{align}
		\pm \mathcal{F}^{-1}(\hat{f}(\xi_e, \tau,& \xi_{e^{\perp}}) \big(\sqrt{\pm \tau - \xi_{e^{\perp}}^2} + \xi_e \big)^{-1} )\\ \nonumber
		& =  \mathcal{F}^{-1}\left(\sqrt{\pm \tau - \xi_{e^{\perp}}^2}\hat{u}(\xi_e, \tau, \xi_{e^{\perp}})\right)- \xi_e \hat{u}(\xi_e, t, x_{e^{\perp}}).
	\end{align}
	Thus, \eqref{Schro} is equivalent to an intial value problem of the following type
	\begin{align}
		\begin{cases} 
			(i \partial_{r} + D^{\pm}_{t, x})v(t,r,x) = f,\label{pseudole} \\[5pt]
			v(t,0,x)= u(t,x),\\
		\end{cases}
	\end{align}
	where $ \widehat{D^{\pm}_{t,x}v}(\tau, \xi) =  \sqrt{ \pm \tau - | \xi|^2}\hat{v}(\tau, \xi)$. Thus (at least formally) the homogeneous solution of \eqref{pseudole} is represented as
	$$ v(t,r,x) = e^{i r D^{\pm}_{t,x}}u(t, x).$$
	In the following, we only consider homogeneous estimates for \eqref{Schro}, wich imply all linear estimates we need in Section\ref{sec:Linear-est-func}. Inhomogeneous bounds for the biharmonic problem \eqref{equation-linear} with $ F \in L^1_e L^2_{t, e^{\perp}}$ can be proven similarly as for the Schr\"odinger equation using the calculation in Section \ref{sec:Linear-est-func}.
	\absatz
	The equation \eqref{pseudole} has the scaling $v_{\lambda}(t,r,x) = v(\lambda^2 t, \lambda r,\lambda x),~ \lambda > 0 $ and we now prove the following Strichartz estimate.
	
	\begin{Lemma}[Strichartz estimate] \label{Strichartz}
		Let $ u \in \mathcal{S}'(\R \times \R^{d-1})$,~ $ f \in \mathcal{S}'(\R \times (\R \times \R^{d-1}))$ have Fourier support in
		$$ \{ \pm \tau \geq \xi^2 \}\cap A_{\lambda}$$
		for some dyadic $ \lambda \in 2^{\Z}$. Then there holds
		\begin{align}\label{Strich1}
			\norm{e^{i r D^{\pm}_{t,x}}u(t,  x)}{L^p_r L^q_{t, x}} &\lesssim \lambda^{ \f{d+1}{2} - \f{1}{p} - \f{d+1}{q} } \norm{ u}{L^2_{t,x}},\\[5pt] \label{Strichartz2}
			\norm{\int_{- \infty}^r e^{i(r-s) D^{\pm}_{t,x} } f(s,t, x)~ds}{L^p_r L^q_{t,x}}  &\lesssim \lambda^{ \f{1}{\tilde{p}'} - \f{1}{p}  +  (d+1)(\f{1}{\tilde{q}'} - \f{1}{q}) -1} \norm{f}{L^{\tilde{p}'}_r L^{\tilde{q}'}_{t,x}},
		\end{align}
		where $(p,q),~(\tilde{p}, \tilde{q})$ are admissible, i.e. $1 \leq p, q \leq \infty,~ (p,q) \neq (2, \infty)  $ if $ d = 2$ and
		\begin{align}
			\f{2}{p}+\f{d}{q} \leq \f{d}{2}.
		\end{align}
	\end{Lemma}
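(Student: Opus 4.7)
My plan is to reduce to frequency $\lambda=1$ via the parabolic scaling and then invoke the Keel--Tao framework, for which I need an energy identity and a pointwise dispersive bound for the kernel of $e^{irD^{\pm}_{t,x}}$. The parabolic rescaling $(t,r,x)\mapsto(\lambda^2 t,\lambda r,\lambda x)$ carries Fourier support in $\{\pm\tau\geq\xi^2\}\cap A_{\lambda}$ to the corresponding set at $\lambda=1$; comparing how $\|\cdot\|_{L^2_{t,x}}$ and $\|\cdot\|_{L^p_r L^q_{t,x}}$ transform produces exactly the prefactor $\lambda^{(d+1)/2-1/p-(d+1)/q}$ appearing in \eqref{Strich1}, so it suffices to prove the $\lambda=1$ estimate. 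The case $(p,q)=(\infty,2)$ is then immediate from Plancherel.

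The core task is the dispersive bound for the convolution kernel
\[
K(r,t,x)=\int e^{i(r\sqrt{\pm\tau-\xi^2}+t\tau+x\cdot\xi)}\chi_1(\tau,\xi)\,d\tau\,d\xi,
\]
where $\chi_1$ is a smooth cutoff to $\{\pm\tau\geq\xi^2\}\cap A_1$. I would change variables $\sigma=\sqrt{\pm\tau-\xi^2}$, so that $\tau=\pm(\sigma^2+|\xi|^2)$ with Jacobian $2\sigma$, the factor $2\sigma$ absorbing the singularity of the square root along the boundary $\pm\tau=\xi^2$. In the new variables the phase separates as $r\sigma\pm t\sigma^2+(\pm t|\xi|^2+x\cdot\xi)$, so the kernel factors into a one-dimensional oscillatory integral in $\sigma$ times a $(d-1)$-dimensional Schr\"odinger-type kernel in $\xi$. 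A two-regime analysis (van der Corput in $\sigma$ when $|t|\gtrsim|r|$, and repeated integration by parts in $\sigma$ when $|r|\gg|t|$, combined with the standard $(1+|t|)^{-(d-1)/2}$ bound for the $\xi$-integral) yields $|K(r,t,x)|\lesssim (1+|r|)^{-d/2}$ uniformly in $(t,x)$.

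With the energy identity $\|e^{irD^{\pm}}u\|_{L^2_{t,x}}=\|u\|_{L^2_{t,x}}$ and the dispersive decay at rate $d/2$ in the $r$-direction, the Keel--Tao theorem supplies the full Strichartz scale on the admissibility line $\tfrac{2}{p}+\tfrac{d}{q}=\tfrac{d}{2}$ (with the $(2,\infty)$ exclusion when $d=2$); the strict-inequality case is recovered by interpolating with the energy bound and using Bernstein at frequency one. For \eqref{Strichartz2} I would first control the unrestricted Duhamel integral $\int_{-\infty}^{\infty}e^{i(r-s)D^{\pm}}f(s)\,ds$ by the $TT^{*}$-dual of the homogeneous estimate, and then apply the Christ--Kiselev lemma to pass to the retarded integral $\int_{-\infty}^{r}$, which is legitimate in the range $p>\tilde p'$ covering the admissible pairs of interest.

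The main technical obstacle is the dispersive bound itself, since $\sqrt{\pm\tau-\xi^2}$ is not smooth along $\pm\tau=\xi^2$, and this boundary does intersect the support set $A_1\cap\{\pm\tau\geq\xi^2\}$. The $\sigma$-change of variable is essential here: its Jacobian vanishes exactly on the bad set, so the integrand in the new variables is perfectly smooth, and what remains is a routine---if case-heavy---application of stationary phase and van der Corput, following the calculation in \cite{bejenaru1}.
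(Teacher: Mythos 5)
Your proposal is correct and follows essentially the same route as the paper: rescale to $\lambda=1$, prove the dispersive bound $|K(t,r,x)|\lesssim(1+|r|)^{-d/2}$ for the kernel of $e^{irD^{\pm}_{t,x}}$, and conclude by $TT^*$/Keel--Tao together with Christ--Kiselev for the retarded integral. The only (cosmetic) difference is that the paper obtains the kernel decay by recognizing $K$ as the Fourier transform of a compactly supported measure on the paraboloid $\{\pm\tau=|\xi|^2\}$ with $d$ non-vanishing principal curvatures and citing the classical decay theorem, whereas you rederive it by hand via the substitution $\sigma=\sqrt{\pm\tau-\xi^2}$ and a van der Corput/non-stationary-phase case analysis.
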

	\begin{proof}
		%In general, if $ u \in \mathcal{S}(\R \times \R^{n-1})$ such that $ \hat{u}$ has support in the set $ \{ \pm \tau \geq \xi^2 \}$, then
		%\begin{align}
		%\norm{e^{-i r D^{\pm}_{t,x}}u(t,  x)}{L^p_r L^q_{t, x}} \leq C_{d,p,q} \norm{ (\pm\tau - \xi^2)^{\f{\gamma}{2}} \hat{u}(\tau, \xi)}{L^2_{\tau,\xi}},
		%\end{align}
		%where $ (p,q)$ are admissible and $ \gamma = \f{n+1}{2} - \f{1}{p} - \f{n+1}{q}.$ This reduces to \eqref{Strich1} by 
		%	We choose the radial functions $ \chi \in C^{\infty}_c(B(0,1)) $ with $ \chi(x) = 1  $ if $ |x| \leq \f{1}{2}$. Then $ \varphi = \chi(\cdot 2^{-1})- \chi $ satisfies
		%	$$ \sum_{j \in \Z} \varphi(2^{-j} \cdot) = 1 ~~\text{in}~\R \backslash \{0\}.$$
		
		We use a Littlewood-Paley decomposition 
		$$ \widehat{P_{\lambda}(u)}(\tau, \xi) = \varphi((\tau^2 + |\xi|^4)^{\f{1}{4}}\slash \lambda)\hat{u}(\tau, \xi), $$
		and  by scaling of \eqref{pseudole}, we have $ P_{1}(u_{\lambda^{-1}}) = (P_{\lambda}u)_{\lambda^{-1}}$. Thus we reduce the estimate \eqref{Strich1} to
		\begin{align}\label{Strich31}
			\norm{e^{i r D^{\pm}_{t,x}}P_1u(t,  x)}{L^p_r L^q_{t, x}} \leq C_{d,p,q} \norm{ \varphi((\tau^2 + |\xi|^4)^{\f{1}{4}})\hat{u}(\tau, \xi)}{L^2_{\tau,\xi}}.
		\end{align}
		We further have $e^{i r D^{\pm}_{t,x}}P_1u(t,  x) = K *_{t,x} P_1u, $ with kernel
		\begin{align}
			K(t,r,x)  = \int \int e^{i(x,t,r) \cdot (\xi, \tau,\sqrt{ \pm \tau - \xi^2}) } \psi(\tau, \xi) ~d\tau d\xi,
		\end{align}
		where $ \psi \in C^{\infty}_c(\R \times \R^{d-1})$ with $ \psi(\tau, \xi) = 1 $ for $ (\tau, \xi) \in \supp( (\tau, \xi) \mapsto \varphi(( \tau^2 + |\xi|^4)^{\f{1}{4}})))$.
		which is the Fourier transform of a (compactly supported) surface carried measure  on the hypersurface
		$$ S = \left \{ \left ( \xi, \tau, \sqrt{\pm \tau - \xi^2}\right )~|~ \xi \in \R^{d-1}, \tau \in \R, \xi^2 \leq \pm \tau \right \},$$
		and $ S $ has $ d $ non-vanishing principal curvature functions in the relevant coordinate patch. Thus we observe
		\begin{align}
			|K(t,r,x)| \lesssim  ( 1 + |r| )^{- \f{d}{2}},~~ t \in \R,~ x \in \R^{d-1},
		\end{align}
		which gives
		\begin{align}\label{dispersive}
			\norm{e^{i r D^{\pm}_{t,x}}P_1u(t,  x)}{L^{\infty}_{t,x}} \lesssim ( 1 + |r| )^{- \f{d}{2}} \norm{P_1u}{L^1_{t,x}}.
		\end{align}
		Now in the endpoint case $(p,q) = (2, \f{2d}{d-2})$, we need to apply Keel-Tao's argument and otherwise we can use direct interpolation. More precisely, combining \eqref{dispersive} and the fact that $e^{i r D^{\pm}_{t,x}}$ is a group on $L^2_{t,x}$ with a classical $T T^*$ argument and the Christ-Kiselev Lemma, we deduce  \eqref{Strich1} and \eqref{Strichartz2} for $P_1u$.
	\end{proof}
	%\textcolor{red}{Put the linear estimates in the appendix since this is close to Bejenarus work. Only formulate the consequence for the biharmonic wave operator and formulate the function spaces.}
	%\subsection{Homogeneous estimates}
	%Then, provided the initial data for the Schrödinger equation is sufficiently localized on the Fourier side, change of coordinates in the space-time variable implies a gain of \textit{half of a derivative} $2^{\f{k}{2}}$  for the solution $ e^{\pm it \Delta}u_0$ in the space $L_{x_e}^p L_{t, x_{e^{\perp}}}^q $. Likewise one loses the derivative  $2^{\f{k}{2}}$ for the solution $ e^{-i x_e D_{t,x_{e^{\perp}}}}u(t, x_{e^{\perp}})$ of \eqref{pseudol} in the space  $L_t^pL_x^q$ where $ u$ is in $ \mathcal{S}(\R \times \R^d) $  and $(p,q)$ is an  admissible pair as above. This is made precise in the the following corollary.
	We remark that Lemma \ref{Strichartz} is valid if $ u, f$ have Fourier support in e.g. in $ A_{\lambda/2} \cup A_{\lambda} \cup A_{2\lambda}$, i.e. as long as frequency is controlled by $\lambda$.
	For a dyadic number $ \lambda \in 2^{\Z}$, we recall the definition of $ A^d_{\lambda} =\{ \xi~|~ \lambda / 2 \leq |\xi| \leq 2\lambda\}$. An immediate consequence of the Strichartz estimate is the following Corollary.
	\begin{Cor}\label{Corol-Strichartz}  Let $ u_0 \in L^2( \R^d)$,~ $e \in \mathbb{S}^{d-1}$,~ $\lambda > 0$ dyadic with $  \supp(\hat{u}_0)\subset A^d_{\lambda} \cap A_e  $.
		%$$ \supp(\hat{u}_0)\subset \left \{ \xi_{e^{\perp}} +  \tau  e~|~ \xi \in \R^d,~ \tau \in \R,~ \f{|\xi|}{\sqrt{2}}\leq \xi \cdot e,~ | \tau - \xi^2| \leq \f{(\xi\cdot e)^2}{2},~ |\xi| \approx 2^k \right \}.$$
		Then there holds
		\begin{align}
			\norm{e^{\pm it \Delta} u_0}{L_{e}^p L_{t, e^{\perp}}^q} \leq C \lambda^{ \f{d}{2} - \f{1}{p}- \f{(d+1)}{q}}\norm{u_0}{L^2_{x}},
		\end{align}
		where $ (p,q)$ is an admissible pair. Let $ u \in \mathcal{S}'(\R\times \R^{d})$, ~$e \in \mathbb{S}^{d-1}$,~ $\lambda > 0 $ dyadic such that 
		$$ \supp(\hat{u})\subset \left \{( \tau, \xi_{e^{\perp}} )~|~ (\tau, \xi) \in B^{\pm}_e \cap A_{\lambda} \right \}$$
		Then there holds
		\begin{align}
			\norm{e^{i x_e D^{\pm}_{t,x_{e^{\perp}}}}u(t, x_{e^{\perp}})}{L_t^pL_x^q} \leq C_{d,p,q} \lambda^{\f{d+ 1}{2} - \f{2}{p}- \f{d}{q}}\norm{u(t, x_{e^{\perp}})}{L^2_{t, x_{e^{\perp}}}},
		\end{align}
		where $(p,q)$ is an admissible pair.
		
	\end{Cor}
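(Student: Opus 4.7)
The plan is to deduce both parts of the Corollary from Lemma \ref{Strichartz} by identifying the Schr\"odinger propagator $e^{\pm i t \Delta}$ with the pseudo-differential evolution $e^{i x_e D^{\pm}_{t,x_{e^{\perp}}}}$ coming from the rotated formulation in \eqref{pseudo-evo1}. The key observation is that on the paraboloid $\tau = \mp|\xi|^2 = \mp(\xi_e^2 + \xi_{e^{\perp}}^2)$, restricting to $\xi \in A_e$ forces $\xi_e \geq 0$, so the branch $\xi_e = \sqrt{\pm \tau - \xi_{e^{\perp}}^2}$ is uniquely determined, allowing us to trade the $\partial_t$-evolution for an $\partial_{x_e}$-evolution.

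For part~(a), I first write $u(t,x)=e^{\mp it\Delta}u_0(x)$ in rotated coordinates $(x_e,x_{e^{\perp}})$. Taking the Fourier transform in $(t,x_{e^{\perp}})$ and using the paraboloid support, I obtain $u(t, x_e e + x_{e^{\perp}}) = e^{ix_e D^{\pm}_{t,x_{e^{\perp}}}} v(t,x_{e^{\perp}})$, where $v := u|_{x_e=0}$. Since $\xi \in A_\lambda^d \cap A_e$ forces $\xi_e \sim \lambda$, $|\xi_{e^{\perp}}| \lesssim \lambda$ and $\tau \sim \lambda^2$, the trace $v$ has its Fourier transform supported in $\{\pm\tau\geq \xi_{e^{\perp}}^2\}\cap A_\lambda$ of the $(d{-}1)$-dimensional Lemma~\ref{Strichartz}. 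A direct Plancherel/change-of-variable computation using the Jacobian $d\tau = 2\xi_e\,d\xi_e$ gives the trace identity
\begin{equation*}
\|v\|_{L^2_{t,x_{e^{\perp}}}}^2 \;=\; \int \frac{|\hat{u}_0(\xi)|^2}{2\xi_e}\,d\xi \;\sim\; \lambda^{-1}\|u_0\|_{L^2_x}^2.
\end{equation*}
Combining this with the homogeneous bound \eqref{Strich1} of Lemma~\ref{Strichartz} yields the desired exponent $\lambda^{(d+1)/2 - 1/p - (d+1)/q}\cdot \lambda^{-1/2} = \lambda^{d/2 - 1/p - (d+1)/q}$.

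For part~(b), I reverse the roles: set $w(t,x_e e + x_{e^{\perp}}) := e^{ix_e D^{\pm}_{t,x_{e^{\perp}}}}u(t,x_{e^{\perp}})$. Its Fourier support in $(\tau,\xi)$ lies on $\{\xi_e=\sqrt{\pm\tau-\xi_{e^{\perp}}^2}\}$, i.e.\ on the full $d$-dimensional Schr\"odinger paraboloid $\tau = \pm|\xi|^2$, so $w = e^{\mp it\Delta} w_0$ for some $w_0 \in L^2(\R^d)$ with $\hat{w}_0$ supported in $A_\lambda^d \cap A_e$. The classical frequency-localized Strichartz estimate of Corollary~\ref{Strichh} gives $\|w\|_{L^p_t L^q_x} \lesssim \lambda^{d/2 - 2/p - d/q}\|w_0\|_{L^2_x}$, while the same trace identity as above applied in the reverse direction yields $\|w_0\|_{L^2_x} \sim \lambda^{1/2}\|u\|_{L^2_{t,x_{e^{\perp}}}}$. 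Multiplying the two bounds produces the claimed $\lambda^{(d+1)/2 - 2/p - d/q}$.

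The routine obstacles I anticipate are purely bookkeeping: tracking signs so that the correct branch of the square root $\sqrt{\pm\tau - \xi_{e^{\perp}}^2}$ is selected (guaranteed by $\xi_e \geq |\xi|/\sqrt{2}$ on $A_e$), verifying that each intermediate object lies in the support set required to apply Lemma~\ref{Strichartz}, and correctly accounting for the single factor $\lambda^{\pm 1/2}$ that distinguishes a trace norm from a bulk $L^2$ norm on a frequency-localized paraboloid. Once these identifications are in place the estimates are immediate consequences of the dispersive decay already exploited in Lemma~\ref{Strichartz}, so no further hard analysis is needed.
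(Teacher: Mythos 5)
Your proposal is correct and follows essentially the same route as the paper: part (a) is reduced to Lemma \ref{Strichartz} by the change of variables $\xi_e=\sqrt{\pm\tau-\xi_{e^{\perp}}^2}$, with the Jacobian $d\tau=2\xi_e\,d\xi_e$ supplying exactly the $\lambda^{-1/2}$ correction between the trace norm and $\|u_0\|_{L^2}$, and part (b) is obtained by reversing that change of variables and invoking the classical frequency-localized Strichartz estimate for the Schr\"odinger group. The bookkeeping points you flag (branch selection via $\xi_e\geq|\xi|/\sqrt2$ on $A_e$, and verifying the support hypothesis of Lemma \ref{Strichartz} for the transformed data) are precisely the checks the paper carries out.
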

	
	\begin{proof}
		For the first statement, we identify $ \xi = (\xi \cdot e) e + \xi_{e^{\perp}}  \mapsto ( \xi_e , \xi_{e^{\perp}})$ and proceed as follows. By the change of coordinates $ \sqrt{\pm \tau - \xi_{e^{\perp}}^2} = \xi_e $, we have
		\begin{align*}
			\int_{\R^d} e^{i x \cdot \xi}& e^{\pm i t |\xi|^2} \hat{u}_0 ( \xi) ~d \xi \\
			&= \int_{[e]^{\perp}} \int_{ \{ \pm \tau \geq \xi_{e^{\perp}}^2\}} e^{\pm i t \tau} e^{i x_{e^{\perp}} \xi_{e^{\perp}} } e^{i x_e \sqrt{\pm \tau - \xi_{e^{\perp}}^2}}  \hat{u}_0 ( \sqrt{\pm \tau - \xi_{e^{\perp}}^2}e + \xi_{e^{\perp}}) ~\f{d \tau}{2 \sqrt{\pm\tau - \xi_{e^{\perp}}^2}}~d \xi_{e^{\perp}}.
		\end{align*}
		Now we set
		$$ \hat{u}(\tau, \xi_{e^{\perp}}) = \hat{u}_0 ( \sqrt{\pm \tau - \xi_{e^{\perp}}^2}e + \xi_{e^{\perp}}) \left (2 \sqrt{\pm \tau - \xi_{e^{\perp}}^2}\right)^{-1},~~\text{if}~ \pm \tau \geq | \xi_{e^{\perp}}|^2, $$
		and $\hat{u}(\tau, \xi_{e^{\perp}}) = 0$ elsewhere. By assumption on $ \hat{u}_0 $, we have $ u \in \mathcal
		S'(\R \times \R^{d-1})$ (upon the identification of $[e]^{\perp} = \R^{d-1}$) and for $ (\tau, \xi_{e^{\perp}} ) \in \supp(\hat{u})$ there holds $ \sqrt{\pm \tau - \xi_{e^{\perp}}^2} = \xi_e \sim \lambda$  and $\lambda / 2\leq \sqrt{|\tau| + \xi_{e^{\perp}}^2} \leq 4\lambda $ (since in particular $u_0 $ localizes in $ A_e \cap A_{\lambda}^d$). Thus, we apply Lemma \ref{Strichartz} and conclude
		\begin{align*}
			\norm{e^{\pm it \Delta} u_0(x)}{L_{e}^p L_{t, e^{\perp}}^q} &= \norm{\int_{\R^d} e^{i x \cdot \xi} e^{\pm i t |\xi|^2} \hat{u}_0 ( \xi) ~d \xi}{L_{e}^p L_{t, e^{\perp}}^q}\\
			& = \norm{ \int_{[e]^{\perp}} \int_{ \{ \pm \tau \geq \xi_{e^{\perp}}^2\}} e^{\pm i t \tau} e^{i x_{e^{\perp}} \xi_{e^{\perp}} } e^{i x_e \sqrt{\pm \tau - \xi_{e^{\perp}}^2}}  \hat{u} (\tau, \xi_{e^{\perp}}) d \tau d \xi_{e^{\perp}}}{L_{e}^p L_{t, e^{\perp}}^q}\\
			&\lesssim \lambda^{ \f{d+1}{2} - \f{1}{p} - \f{d+1}{q} }  \norm{\hat{u}_0 ( \sqrt{\tau - \xi_{e^{\perp}}^2}e + \xi_{e^{\perp}}) \left (2 \sqrt{\pm \tau - \xi_{e^{\perp}}^2}\right)^{-1} \chi \{ \pm \tau > \xi_{e^{\perp}}^2 \}}{L^2_{\tau,e^{\perp}}}\\
			& \lesssim\lambda^{ \f{d}{2} - \f{1}{p} - \f{d+1}{q}} \norm{u_0}{L^2_{x}},
		\end{align*}
		where, for the last inequality, we reverse the coordinate change and estimate the Jacobian.
		For the second statement, the estimate follows from Strichartz estimates for the Schr\"odinger group and from the above coordinate transform in the backward direction. To be more precise, we estimate
		\begin{align*}
			\norm{e^{-i x_e D^{\pm}_{t,x_{e^{\perp}}}}u(t, x_{e^{\perp}})}{L_t^pL_x^q} &= \norm{\int_{\R^d} e^{i x \cdot \xi} e^{\pm i t |\xi|^2} \hat{u} ( \pm | \xi|^2, \xi_{e^{\perp}}) ~2 (\xi \cdot e) d \xi}{L_t^pL_x^q}\\
			&\lesssim \lambda^{ \f{d+1}{2} - \f{2}{p} - \f{d}{q} }  \norm{ (\xi \cdot e)^{\f{1}{2}}\hat{u} (\pm | \xi|^2,\xi_{e^{\perp}}) \chi \{ \xi \cdot e \geq 0 \}}{L^2_x}\\
			&\lesssim \lambda^{\f{d+ 1}{2} - \f{2}{p}- \f{d}{q}}\norm{u(t, x_{e^{\perp}})}{L^2_{t, x_{e^{\perp}}}}.
		\end{align*}
	\end{proof}
	\begin{Rem}
		In the case $ \supp(\hat{u}_0) \subset A_{\lambda}^d  $, we obtain  from Corollary \ref{Corol-Strichartz}
		\begin{align}\label{rem}
			\sup_{ e \in \mathcal{M}}\norm{e^{\pm it \Delta}P_e(\nabla) u_0}{L_{e}^p L_{t, e^{\perp}}^q} \leq C \lambda^{ \f{d}{2} - \f{1}{p}- \f{(d+1)}{q}}\norm{u_0}{L^2_{x}},
		\end{align}
		and especially the $L^p_e L^{\infty}_{t, e^{\perp}} $ estimate for $q = \infty,~ pd \geq 4,~ d \geq 3$.
	\end{Rem}
	The next Lemma (from \cite{bejenaru1}) shows that the $P_e(\nabla)$ localization on the LHS of \eqref{rem} is not necessary in the case $ q =\infty $ if $ dp > 4$. 
	\begin{Lemma}\label{maxfunc} Let $ u_0 \in L^2( \R^{d})$ such that $ \supp(\hat{u}_0) \subset A_{\lambda}^d$ for some dyadic  $\lambda \in 2^{\Z}$. Then there holds
		\begin{align}\label{Strich4}
			\sup_{e \in \mathcal{M}}\norm{e^{\pm i t \Delta}u_0}{L^p_e L^{\infty}_{t, e^{\perp}}} \leq C_{d,p} \lambda^{\f{d}{2} - \f{1}{p}  } \norm{u_0}{L^2_{x}},
		\end{align}
		where ~$ 1 \leq p \leq \infty $ and $ dp > 4$. Let $ u \in \mathcal{S}'(\R\times \R^{d})$, such that
		$$ \supp(\hat{u})\subset \{ (\tau, \xi_{\tilde{e}^{\perp}})~|~ (\tau, \xi) \in B_{\tilde{e}}^{\pm} \cap A_{\lambda}\}$$
		for some $\lambda \in 2^{\Z}$ and $ \tilde{e} \in \mathcal{M}$. Then there holds
		\begin{align}\label{Strich5}
			\sup_{e \in \mathcal{M}}\norm{e^{- i r D^{\pm}_{t, x_e}}u(t, x_{e^{\perp}})}{L^p_{r} L^{\infty}_{t, x_{e^{\perp}}}} \leq C_{d,p} \lambda^{ \f{d+1}{2} - \f{1}{p} } \norm{u(t, x_{\tilde{e}^{\perp}})}{L^2_{t,x_{\tilde{e}^{\perp}}}},
		\end{align}
		where $(d,p)$ are as above.
	\end{Lemma}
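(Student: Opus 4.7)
The plan is to prove (a) first and then reduce (b) to it via the substitution $\xi_e \leftrightarrow \sqrt{\pm\tau - |\xi_{e^\perp}|^2}$ introduced in the derivation of Corollary \ref{Corol-Strichartz}. For (a), parabolic scaling reduces matters to $\lambda = 1$, so that $\hat u_0$ is supported in $A_1^d$. The starting observation is that $e^{\pm it\Delta} u_0$ is Fourier-localized in $(\tau, \xi)$ on the paraboloid $\tau = \mp|\xi|^2$ with $|\xi|\sim 1$; hence the trace $(t, x_{e^\perp}) \mapsto e^{\pm it\Delta} u_0(x_e e + x_{e^\perp})$ has Fourier support (in $(\tau, \xi_{e^\perp})$) in a fixed compact set for every $x_e$. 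Bernstein in these $d$ variables then yields, for any finite $q \geq 2$,
\[
\|e^{\pm it\Delta} u_0\|_{L^p_e L^\infty_{t, e^\perp}} \lesssim \|e^{\pm it\Delta} u_0\|_{L^p_e L^q_{t, e^\perp}}.
\]
I will take $q = 2(d+1)/(d-1)$, the diagonal Strichartz exponent in dimension $d-1$.

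For $p \in [2,\infty]$ the argument is direct and bypasses any angular decomposition of $u_0$. By Minkowski (valid since $p \leq q$) the right-hand side is bounded by $\|e^{\pm it\Delta} u_0\|_{L^q_{t, e^\perp} L^p_e}$, and since $\hat u_0$ is supported in $|\xi_e| \leq 1$, Bernstein in the $x_e$-direction upgrades $\|\cdot\|_{L^p_e} \lesssim \|\cdot\|_{L^2_e}$ for $p \geq 2$. Next, splitting $\Delta = \partial_{x_e}^2 + \Delta_{e^\perp}$ and using Plancherel in $x_e$ furnishes the pointwise identity
\[
\|e^{\pm it\Delta} u_0(\cdot, t, x_{e^\perp})\|_{L^2_{x_e}}^2 = \int_{\R} \bigl|e^{\pm it\Delta_{e^\perp}} \widehat{u_0}^{\,x_e}(\xi_e, \cdot)(x_{e^\perp})\bigr|^2 d\xi_e,
\]
where $\widehat{u_0}^{\,x_e}$ denotes the 1D Fourier transform in $x_e$. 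Minkowski's integral inequality (valid since $q/2 \geq 1$) together with the diagonal $(d-1)$-dimensional Strichartz bound $\|e^{\pm it\Delta_{e^\perp}} v\|_{L^q_{t, x_{e^\perp}}} \lesssim \|v\|_{L^2_{x_{e^\perp}}}$ applied fibrewise in $\xi_e$ gives
\[
\|e^{\pm it\Delta} u_0\|_{L^q_{t, e^\perp} L^2_e}^2 \leq \int_{\R} \|\widehat{u_0}^{\,x_e}(\xi_e, \cdot)\|_{L^2_{x_{e^\perp}}}^2 d\xi_e = \|u_0\|_{L^2}^2,
\]
closing this range. The trivial $p = \infty$ case also follows from the energy bound and Bernstein in $x$.

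The main obstacle is the remaining range $4/d < p < 2$, relevant only for $d \geq 3$, where the Bernstein step $L^p_e \lesssim L^2_e$ fails. My plan is to recover it by real interpolation between the $p = 2$ estimate just established and a Strichartz-type endpoint near $p = 4/d$ obtained through the angular decomposition $u_0 = \sum_{\tilde e \in \mathcal M} P_{\tilde e}(\nabla) u_0$: for each piece, Corollary \ref{Corol-Strichartz} applied in the lateral direction $\tilde e$ followed by Bernstein in $(t, \tilde e^\perp)$ gives the bound in $L^p_{\tilde e} L^\infty_{t, \tilde e^\perp}$, and the transfer to $L^p_e L^\infty_{t, e^\perp}$ is accomplished using the bounded-overlap property of $\mathcal M$ together with a rotation argument. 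Finally, (b) is deduced from (a) by the substitution above: on the support $B_{\tilde e}^\pm \cap A_\lambda$ we have $\xi_e \sim \lambda$ and the Jacobian $2\sqrt{\pm\tau - |\xi_{e^\perp}|^2} \sim \lambda$ by \eqref{facts2}, so the pseudo-propagator $e^{-ir D^\pm_{t, x}}$ is unitarily equivalent (modulo a bounded multiplier of size $\lambda^{-1/2}$) to the Schr\"odinger group on the relevant frequency shell, and running (a) through this equivalence produces (b) with the stated exponent $(d+1)/2 - 1/p$.
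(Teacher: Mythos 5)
Your argument for the range $p\in[2,\infty]$ is correct and is genuinely different from the paper's: you combine Bernstein in $(t,x_{e^{\perp}})$ (legitimate, since the trace on each hyperplane $\{x_e=r\}$ has Fourier support in a fixed compact set), Plancherel in $x_e$, and the diagonal $(d-1)$--dimensional Strichartz estimate, whereas the paper proves the whole lemma at once by a $TT^*$ argument: the unit-frequency kernel satisfies $|K(t,x)|\lesssim (1+|x\cdot e|)^{-\f{d}{2}}$ for \emph{every} $e$, so $\norm{K}{L^{p/2}_eL^{\infty}_{t,e^{\perp}}}\lesssim 1$ exactly when $dp>4$, and Young's inequality in the mixed norm closes the argument. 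Your route is arguably cleaner for $p=2$ (which is the only case the paper actually uses later), but note one slip: Minkowski's inequality $L^p_eL^q_{t,e^{\perp}}\le L^q_{t,e^{\perp}}L^p_e$ requires $p\le q=\f{2(d+1)}{d-1}$, which fails for large finite $p$; this is harmless because once $p=2$ and $p=\infty$ are known, the intermediate exponents follow by interpolating the $L^p_{x_e}$ norms of the scalar function $r\mapsto\norm{u(\cdot,re+\cdot)}{L^{\infty}_{t,e^{\perp}}}$. Part (b) does reduce to part (a) by the change of variables $\xi_e=\sqrt{\pm\tau-\xi_{e^{\perp}}^2}$ with Jacobian $\sim\lambda$ on $B^{\pm}_{\tilde e}\cap A_{\lambda}$, consistent with the extra factor $\lambda^{1/2}$; that reduction is sound.

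The genuine gap is the range $\f4d<p<2$ (present for all $d\ge 3$). Your plan is to bound each angular piece $P_{\tilde e}(\nabla)u_0$ in $L^p_{\tilde e}L^{\infty}_{t,\tilde e^{\perp}}$ via Corollary \ref{Corol-Strichartz} plus Bernstein, and then ``transfer'' to $L^p_{e}L^{\infty}_{t,e^{\perp}}$ for an unrelated direction $e$ using bounded overlap of $\mathcal M$ and a rotation argument. No such transfer exists: for $\tilde e\neq e$ the mixed norms $L^p_{\tilde e}L^{\infty}_{t,\tilde e^{\perp}}$ and $L^p_{e}L^{\infty}_{t,e^{\perp}}$ are not comparable for $p<\infty$, even for functions with Fourier support in a fixed compact set. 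For instance, with $d=2$, $e=(1,0)$, $\tilde e=(0,1)$, the function $f(t,x_1,x_2)=\psi(t)\phi(x_2)\chi(x_1/R)$ has $\norm{f}{L^p_{\tilde e}L^{\infty}_{t,\tilde e^{\perp}}}=O(1)$ but $\norm{f}{L^p_{e}L^{\infty}_{t,e^{\perp}}}\sim R^{1/p}\to\infty$; the sup over a hyperplane orthogonal to $e$ sees all values of $x\cdot\tilde e$, so integrability in one lateral direction carries no information about integrability in another. Removing the matching between the angular sector and the lateral direction is precisely the content of the lemma (see the Remark preceding it), so this step cannot be waved through; you would need the paper's kernel bound $|K(t,x)|\lesssim(1+|x|+|t|)^{-\f d2}\lesssim(1+|x\cdot e|)^{-\f d2}$, which is uniform in $e$, together with $TT^*$, to cover $p<2$.
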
	
	\begin{proof} By scaling we reduce again to the unit frequency $ \lambda = 1 $.
		Then estimate \eqref{Strich4} is a consequence of the $TT^*$ argument for the Schr\"odinger group in the space $L^p_{e}L^{\infty}_{t, e^{\perp}}$ and Young's inequality. As mentioned before, we obtain the decay
		\begin{align*}
			&\left| \int_{\R^d} e^{i x \cdot \xi} e^{\pm i t |\xi|^2} \varphi(|\xi|) d \xi \right|  \lesssim (1 + | x \cdot e|)^{- \f{d}{2}},
		\end{align*}
		which implies (for $ d p > 4$)
		\begin{align*} 
			&\norm{ \int_{\R^d} e^{i x \cdot \xi} e^{\pm i t |\xi|^2} \varphi(|\xi|) d \xi }{L_e^{\f{p}{2}} L^{\infty}_{t, e^{\perp}}} \lesssim 1.
		\end{align*}
		Then by Young's inequality
		\begin{align*} 
			&\norm{ \int e^{\pm i (t-s) \Delta} f(s)~ds }{L_e^p L^{\infty}_{t, e^{\perp}}} \lesssim \norm{ f }{L_e^{p'} L^{1}_{t, e^{\perp}}},
		\end{align*}
		which implies \eqref{Strich4} by $TT^*$. For \eqref{Strich5}, we use	again (note $ \hat{u}$ is localized in $B_e^{\pm}$, thus \eqref{facts2} holds)
		\begin{align}\nonumber
			\int_{\R^d} e^{i x \cdot \xi}& e^{\pm i t |\xi|^2} \varphi(|\xi|) d \xi\\ \nonumber
			&=  \int_{[e]^{\perp}} \int_{\pm \tau \geq \xi_{e^{\perp}}^2} e^{i  x_e \sqrt{\pm \tau - \xi_{e^{\perp}}}}  e^{i ( x_{e^{\perp}}, t) \cdot ( \xi_{e^{\perp}}, \pm \tau)} \varphi_0( \sqrt{\pm \tau}) \f{d \tau}{2 \sqrt{\pm \tau - \xi_{e^{\perp}}}} d \xi_{e^{\perp}} 
		\end{align}
		and thus we obtain \eqref{Strich5} also from $TT^*$ and Young's inequality for $ \exp(- i D^{\pm}_{t, e^{\perp}})$.	
	\end{proof}

	\begin{Rem}
		The first estimate in Lemma \ref{maxfunc} holds more general by the same argument in the following sense. Let $u_0, u $ as above in Lemma \ref{maxfunc} and further $ 1 \leq p , q \leq \infty $ such that $ q > 4 $ and 
		\begin{align}\label{condi}
			\begin{cases}
				\f{4q}{q-4} < dp, &  q < \infty\\[3pt]
				~~~~4 < dp, &  q = \infty.
			\end{cases}
		\end{align}
		Then there holds
		\begin{align}
			\sup_{e \in \mathcal{M}}\norm{e^{\pm i t \Delta}u_0}{L^p_e L^{q}_{t, e^{\perp}}} &\leq C_{d,p,q} \lambda^{\f{d}{2}- \f{d+1}{q} - \f{1}{p}  } \norm{u_0}{L^2_{x}},%\\[5pt]
			%\sup_{e \in \mathcal{M}}\norm{e^{- i r D^{\pm}_{t, x_e}}u(t, x_{e^{\perp}})}{L^p_{r} L^{q}_{t, x_{e^{\perp}}}} &\leq C_{d,p,q} \lambda^{ \f{d+1}{2} -  \f{n+1}{q} - \f{1}{p} } \norm{u(t, x_{\tilde{e}^{\perp}})}{L^2_{t,x_{\tilde{e}^{\perp}}}}.
		\end{align}
		Provided \eqref{condi} holds, it is verified that 
		$$\int_{\infty}^{\infty} \left( \int_0^{\infty} (1 + |x_e| + r)^{- \f{dq}{4}} r^{d-1}~dr \right)^{\f{p}{q}} d x_e < \infty, $$
		which is required by the argument in the proof of Lemma \ref{maxfunc}, if we use
		\begin{align*}
			&\left| \int_{\R^d} e^{i x \cdot \xi} e^{\pm i t |\xi|^2} \varphi(|\xi|) d \xi \right|  \lesssim (1 + | x_e| + |(t, x_{e^{\perp}})|)^{- \f{d}{2}}.
		\end{align*}
		Under the assumption \eqref{condi}, we especially infer
		$$ \f{2}{p} + \f{d}{q} < \f{2d(q-4) + 4d}{4q} = \f{d(q-2)}{2 q} < \f{d}{2},$$
		so that  $(p,q)$ is admissible. This is a natural requirement, since typically Strichartz bounds with bounded frequency rely on estimating the truncated dispersion factor via Young's inequality.
	\end{Rem}
	\begin{Rem}
		We apply the estimates to Lemma \ref{linear-bounds} and Lemma \ref{linear-bounds-inhomogeneous} in Section \ref{sec:Linear-est-func}. Also, in Section \ref{sec:Linear-est-func}, we need to use Corollary \ref{Corol-Strichartz} and Lemma \ref{maxfunc} for functions on $\R^d$ that have Fourier support in $ A^d_{\lambda /2} \cup A^d_{\lambda } \cup A^d_{2\lambda }$. This is observed (for all $t \in \R$) e.g. for functions on $\R^{d+1}$ localized (in $(\tau, \xi)$) in $B_e \cap A_{\lambda}$, which have Fourier support in $A^d_{\lambda} \cup A^d_{\lambda/2}$, and this poses no problem to the proof.
	\end{Rem}
	\begin{center}
		\textbf{Acknowledgments}
	\end{center}
	The author acknowledges funding by the Deutsche Forschungsgemeinschaft (DFG, German Research Foundation) - Project-ID 258734477 - SFB 1173

	%\bibliography{mybib}
	\bibliographystyle{alpha}

\end{document}